\providecommand{\U}[1]{\protect\rule{.1in}{.1in}}
\newtheorem{theorem}{Theorem}
\newtheorem{condition}[theorem]{Condition}
\newtheorem{definition}[theorem]{Definition}
\newtheorem{lemma}[theorem]{Lemma}
\theoremstyle{remark}
\theoremstyle{definition}
\newtheorem{example}[theorem]{Example}
\newtheorem{remark}[theorem]{Remark}
\numberwithin{equation}{section}
\numberwithin{theorem}{section}
\begin{document}

\title{Analysis and optimization of certain parallel Monte Carlo methods in the low
temperature limit}
\author{Paul Dupuis\thanks{Division of Applied Mathematics, Brown University,
Providence, USA. Research supported in part by the National Science Foundation (DMS-1904992) and the AFOSR (FA-9550-18-1-0214).} \, and Guo-Jhen Wu\thanks{Department of
Mathematics, KTH Royal Institute of Technology, Stockholm, Sweden. Research supported in part by the AFOSR (FA-9550-18-1-0214).} }

\maketitle

\begin{abstract}
Metastability is a formidable challenge to Markov chain Monte Carlo methods. In this paper we present methods for algorithm design to meet this challenge.
The design problem we consider is temperature selection for the infinite swapping scheme, which is 
the limit of the widely used parallel tempering scheme obtained when the swap rate tends to infinity.
We use a recently developed tool for the analysis of the empirical measure of a small noise diffusion to 
transform the variance reduction problem 
into an explicit optimization problem. 
Our first analysis of the optimization problem is
in the setting of a double well model, and it shows that 
the optimal selection of temperature ratios is a geometric sequence except possibly the highest temperature. 
In the same setting we identify two different sources of variance reduction, and show how their competition 
determines the optimal highest temperature. 
In the general multi-well setting we prove that a pure geometric sequence of temperature ratios 
is always nearly optimal, with a performance gap that decays geometrically in the number of temperatures. 
\end{abstract}

\section{Introduction}

\label{sec:introduction}

{Monte Carlo} methods are among the most general purpose stochastic simulation
methods currently available. However, rare events present a particular
challenge for the design of efficient Monte Carlo methods. There is a
relatively long history of the use of large deviation ideas in the design of
algorithms for estimating probabilities of single rare events \cite{deadup,
dupwan5}, since large deviation results can be used to determine how the rare
events are most likely to occur. But less is known on how to adverse overcome
the impact of rare events on Markov chain Monte Carlo (MCMC).

\textit{Parallel tempering} (PT) \cite{swewan, gey}, also known as
\textit{replica exchange}, and a scheme obtained as a suitable limit and known
as \textit{infinite swapping} (INS) \cite{dupliupladol}, are methods for
accelerating MCMC. They work by coupling reversible Markov chains with
different \textquotedblleft temperatures\textquotedblright\ to enhance the
sampling properties of the ensemble. An important question that remains to be
answered is how to choose the temperatures in these algorithms.

In this paper, we apply recently developed methods for the analysis of the
empirical measure of a small noise diffusion to characterize the optimal
temperatures in the low temperature limit, which is the setting where the
difficulties caused by rare events and related metastable behaviors are most
severe. The analysis is done for the INS scheme, which is itself an optimized
limit of parallel tempering, in part because of this optimality, and also in
part because the large deviation properties needed for the analysis take a
simpler form for INS than for PT. However, the conclusions regarding optimal
temperature placements will also be at least approximately valid for parallel
tempering if the swap rate is high enough that it approximates infinite swapping.

In the course of the analysis we are able to identify mechanisms that produce
variance reduction, and find that it has two sources. As will be discussed in
detail later, one source of improved sampling is the increased mobility
obtained by lowering the maximum energy barriers. A second and less obvious
source of variance reduction is due to certain weights appearing in INS, which
play a role reminiscent of the likelihood ratios that appear in importance
sampling (see Section \ref{subsec:source_of_variance_reduction}). As it turns
out, it is the weights that are responsible for most of the variance
reduction, and which ultimately determine the proper placement of the
temperatures in the low temperature limit.

The paper is organized as follows. The problem of interest is described in
Section \ref{sec:problem_formulation}. Various Monte Carlo methods including
PT and INS are discussed in Section \ref{subsec:accelerated_MCMC}, as are the
performance measure we will use to characterize good performance. Section
\ref{sec:statement_of_the_main_results} states the main theoretical results of
the paper, and also includes a discussion on the mechanisms that produce
variance reduction in the accelerated Monte Carlo methods. The proof of
our main result, Theorem \ref{thm:lower_bound_INS},  is given in Section \ref{sec:lower_bound}.
Section \ref{sec:6} gives examples and discusses bounds on crucial parameters that appear in Theorem  \ref{thm:lower_bound_INS}, and
the Appendix sketches the proof of why the INS model satisfies a large
deviation principle on path space.

\section{Problem formulation}

\label{sec:problem_formulation}

We are concerned with computing integrals with respect to a Gibbs measure on
the state space $\mathbb{R}^{d}$. The measure takes the form
\begin{equation}
\mu^{\varepsilon}(dx)\doteq\frac{1}{Z_{\mu}^{\varepsilon}}e^{-\frac{V({x}%
)}{\varepsilon}}dx, \label{eqn:Gibbs}%
\end{equation}
where $V:\mathbb{R}^{d}\rightarrow\mathbb{R}$ is the potential of a complex
physical system, $\varepsilon>0$ is proportional to a parameter that is
interpreted as temperature in physical systems, and the normalization constant
$Z_{\mu}^{\varepsilon}$ is typically unknown.\footnote{To be precise, in a
physical system one would have $\varepsilon=k_{B}T$, where $T$ is the
temperature and $k_{B}$ is Boltzmann's constant, but we abuse terminology and
simplify notation by referring to $\varepsilon$ as a temperature.} As an
elementary example, one would like to estimate $\mu^{\varepsilon}(A)$ for a
set $A\subset\mathbb{R}^{d}$ which does not contain the global minimum of $V$,
with $\partial A$ regular. Problems of this general sort occur in chemistry,
physics, statistics, Bayesian statistics and elsewhere.

Under proper conditions on $V,$ one can check using detailed balance that
$\mu^{\varepsilon}$ is the unique invariant distribution of the diffusion
process $\{X^{\varepsilon}(t)\}_{t\geq0}$ satisfying the stochastic
differential equation%

\begin{equation}
dX^{\varepsilon}\left(  t\right)  =-\nabla V\left(  X^{\varepsilon}\left(
t\right)  \right)  dt+\sqrt{2\varepsilon}dW\left(  t\right)  ,
\label{eqn:dym_original}%
\end{equation}
where $W$ is a $d$-dimensional standard Wiener process.

The empirical measure of $\{X^{\varepsilon}(t)\}_{t\geq0}$ over the time
interval $[0,T]$ is defined by%
\begin{equation}
\lambda^{\varepsilon,T}\left(  dx\right)  \doteq\frac{1}{T}\int_{0}^{T}%
\delta_{X^{\varepsilon}\left(  t\right)  }\left(  dx\right)  dt,
\label{eqn:ordMC}%
\end{equation}
where $\delta_{x}$ is the Dirac measure at $x$. The ergodic theorem implies
$\lambda^{\varepsilon,T}$ gives an approximation to $\mu^{\varepsilon}$, and
strictly speaking it is the use of discrete time analogues in this context
that is known as MCMC, though we will also use the term for the continuous
time model. For the particular problem of approximating $\mu^{\varepsilon}%
(A)$, we have the estimator
\begin{equation}
\theta_{\text{MC}}^{\varepsilon,T}\doteq\lambda^{\varepsilon,T}\left(
A\right)  =\frac{1}{T}\int_{0}^{T}1_{A}\left(  X^{\varepsilon}\left(
t\right)  \right)  dt. \label{eqn:MCMC_estimator}%
\end{equation}
We think of $\theta_{\text{MC}}^{\varepsilon,T}$ as the most straightforward
MCMC estimator of $\mu^{\varepsilon}(A)$, and since we will later on introduce
more complicated estimators, a subscript (e.g., MC) will be used to
distinguish the different estimators.

In many applications (e.g., chemistry, physics, Bayesian inference, counting
\cite{liu, rubkro}), $V(x)$ is a complicated surface which contains multiple
local minima of varying depths. The diffusion $\{X^{\varepsilon}(t)\}_{t\geq
0}$ can be trapped within these deep local minima for a long time before
moving out to other parts of the state space, a phenomena sometimes referred
to a \textit{metastability}. As a result, it requires a very long (exponential
in $1/\varepsilon$) simulation time for $\lambda^{\varepsilon,T}$ to
approximate the equilibrium $\mu^{\varepsilon}$ when $\varepsilon$ is small.

Our analysis of the performance of computational approximations for
$\mu^{\varepsilon}$ will be based on recently derived large deviation
approximations for variances associated with empirical measures such as
(\ref{eqn:MCMC_estimator}) \cite{dupwu}. Following the convention of
\cite[Chapter 6]{frewen2}, \cite{dupwu} considers in place of say
(\ref{eqn:dym_original}) a small noise diffusion that takes values in a
compact and connected manifold $M\subset\mathbb{R}^{d}$ of dimension $r$ and
with smooth boundary (precise regularity assumptions for $M$ are given on
\cite[page 135]{frewen2}). This is also consistent with how MCMC algorithms
for a process such as (\ref{eqn:dym_original}) are often implemented by using
periodic boundary conditions that are far removed for the regions of interest.
However, for ease of discussion we will keep the notation of the SDE model,
but with the understanding that we mean a diffusion process with the same
local characteristics that takes values in the compact space $M$, with $M$
locally equivalent to a Euclidean space.

\begin{remark}
In this paper we focus on the problem of computing integrals with respect to a
Gibbs measure on a continuous state space. However, analogous results for
discrete state systems are expected. See \cite{doldupnyq} for the formulation
of infinite swapping for discrete state models.
\end{remark}

\section{Accelerated MCMC}

\label{subsec:accelerated_MCMC}

In this section we introduce various alternative estimators of $\mu
^{\varepsilon}(A)$ as in (\ref{eqn:Gibbs}). Consider an ergodic Markov process
$\{\bar{X}^{\varepsilon}(t)\}_{t}\subset\bar{M}$ and suppose that
$\nu^{\varepsilon}\in\mathcal{P}(\bar{M})$ is the unique stationary
distribution of $\{\bar{X}^{\varepsilon}(t)\}_{t}.$ As an example, $\bar{M}$
could be $K\in\mathbb{N}$ products of the $M$ just introduced. If we define
$\theta^{\varepsilon,T}$ by
\begin{equation}
\theta^{\varepsilon,T}\doteq\frac{1}{T}\int_{0}^{T}f^{\varepsilon}\left(
\bar{X}^{\varepsilon}\left(  t\right)  \right)  dt \label{eqn:estimator}%
\end{equation}
for a bounded and measurable function $f^{\varepsilon}:\bar{M}%
\rightarrow\mathbb{R}$ such that
\[
\int_{\bar{M}}f^{\varepsilon}\left(  \bar{x}\right)  \nu^{\varepsilon}\left(
d\bar{x}\right)  =\mu^{\varepsilon}(A),
\]
then by the ergodic theorem \cite{bre}, $\theta^{\varepsilon,T}\rightarrow$
$\mu^{\varepsilon}(A)$ w.p.1 as $T\rightarrow\infty$, which means one can also
consider $\theta^{\varepsilon,T}$ as an approximation to $\mu^{\varepsilon
}(A).$ We will consider several classes of estimators that are of the general
form (\ref{eqn:estimator}).

\subsection{Parallel tempering}

\label{subsubsec:parallel_tempering}

{Parallel tempering} is an algorithm used to speed up the sampling of a
\textquotedblleft slowly converging\textquotedblright\ Markov process, i.e.,
one for which the empirical measure converges slowly to the stationary
distribution. Specifically, the idea of two-temperature parallel tempering is
to introduce a \textit{higher temperature}  $\varepsilon/\alpha$ in addition to $\varepsilon$ with $\alpha\in(0,1)$. If $W_{1}$\ and $W_{2}$
are\ independent Wiener processes, then the empirical measure of the pair%
\begin{equation}
\left\{
\begin{array}
[c]{l}%
dX_{1}^{\varepsilon}=-\nabla V(X_{1})dt+\sqrt{2\varepsilon}dW_{1}\\
dX_{2}^{\varepsilon}=-\nabla V(X_{2})dt+\sqrt{2\varepsilon/\alpha}dW_{2}%
\end{array}
,\right.  \label{eqn:dym_PT}%
\end{equation}
gives an approximation to the Gibbs measure with density $\psi^{\varepsilon
}(x_{1},x_{2})\varpropto e^{-V(x_{1})/\varepsilon}e^{-\alpha V(x_{2}%
)/\varepsilon}.$ If we allow \textit{swaps }between $X_{1}^{\varepsilon}$
and $X_{2}^{\varepsilon}$\textit{\ }, i.e., $X_{1}^{\varepsilon}$\ and
$X_{2}^{\varepsilon}$ \textit{exchange locations} with the state dependent
intensity $a\left(  1\wedge\lbrack\psi^{\varepsilon}(x_{2},x_{1}%
)/\psi^{\varepsilon}(x_{1},x_{2})]\right)  $, then we have a \textit{Markov
jump-diffusion}. Moreover, it is straightforward to check this new process
still satisfies detailed balance with respect to $\psi^{\varepsilon}%
(x_{1},x_{2})$ if this swapping intensity is used, and so can be used for
numerical approximations.

It has been shown that various rates of convergence, such as the large
deviation empirical measure rate \cite{dupliupladol} and the asymptotic
variance, can be optimized by letting $a\rightarrow\infty$. This suggests one
should consider the limit as $a\rightarrow\infty$ (the infinite swapping
limit). This cannot be done directly with the parallel tempering processes,
since they will not be tight, and hence do not converge in a meaningful way.
An alternative perspective is to consider a temperature swapping process and
approximate $\psi^{\varepsilon}(x_{1},x_{2})dx_{1}dx_{2}$ by a corresponding
weighted empirical measure instead (see \cite{dupliupladol} for details). The
advantage of doing so is that we have a well defined weak limit process as
$a\rightarrow\infty$, though as noted the empirical measure is replaced by a
weighted analogue. The limit model is as follows. We define $(Y_{1}%
^{\varepsilon},Y_{2}^{\varepsilon})$ as the solution to%
\[
\left\{
\begin{array}
[c]{l}%
dY_{1}^{\varepsilon}=-\nabla V(Y_{1}^{\varepsilon})dt+\sqrt{2\varepsilon\rho^{\varepsilon,\alpha}(Y_{1}^{\varepsilon},Y_{2}^{\varepsilon
})+2\varepsilon\rho^{\varepsilon,\alpha}(Y_{2}^{\varepsilon}%
,Y_{1}^{\varepsilon})/\alpha} dW_{1}\\
dY_{2}^{\varepsilon}=-\nabla V(Y_{2}^{\varepsilon})dt+\sqrt{2\varepsilon
\rho^{\varepsilon,\alpha}(Y_{1}^{\varepsilon},Y_{2}^{\varepsilon
})/\alpha+2\varepsilon\rho^{\varepsilon,\alpha}(Y_{2}^{\varepsilon}%
,Y_{1}^{\varepsilon})}dW_{2}%
\end{array}
,\right.
\]
and then define the weighted empirical measure of $(Y_{1}^{\varepsilon}%
,Y_{2}^{\varepsilon})$ and its permutation $(Y_{2}^{\varepsilon}%
,Y_{1}^{\varepsilon})$ by
\[
\zeta^{\varepsilon,T}(dx)\doteq\frac{1}{T}\int_{0}^{T}\left[  \rho
^{\varepsilon,\alpha}(Y_{1}^{\varepsilon},Y_{2}^{\varepsilon})\delta
_{(Y_{1}^{\varepsilon},Y_{2}^{\varepsilon})}(dx)+\rho^{\varepsilon,\alpha
}(Y_{2}^{\varepsilon},Y_{1}^{\varepsilon})\delta_{(Y_{2}^{\varepsilon}%
,Y_{1}^{\varepsilon})}(dx)\right]  dt,
\]
where%
\[
\rho^{\varepsilon,\alpha}(x_{1},x_{2})
=\frac{e^{-\frac{1}{\varepsilon}\left[  V(x_{1})+\alpha V(x_{2})\right]  }}{e^{-\frac{1}{\varepsilon}\left[  V(x_{1})+\alpha V(x_{2})\right]} +e^{-\frac{1}{\varepsilon}\left[  V(x_{2})+\alpha V(x_{1})\right] }},\quad
\]
(note that $\rho^{\varepsilon,\alpha}(x_{1},x_{2})+\rho^{\varepsilon,\alpha
}(x_{2},x_{1})=1$). One can show that $\zeta^{\varepsilon,T}(dx)$ has
precisely the same distribution as what one would obtain by forming the
ordinary empirical measure of the parallel tempering process with swap rate $a$  and letting $a\rightarrow\infty$.

\begin{remark}
We see that the infinite swapping scheme uses a \textit{symmetrized} version
of the original dynamics together with a \textit{weighted empirical measure}
to construct approximations to $\mu^{\varepsilon}(dx_{1})\mu^{\varepsilon
/\alpha}(dx_{2})$. As noted previously, the weights $\rho^{\varepsilon,\alpha
}$ will play an important role in the reduction of variance, and are in some
sense analogous to the likelihood ratio appearing in importance sampling
\cite{dupwusna}.
\end{remark}


\begin{remark}
Infinite swapping algorithms for continuous time reversible jump Markov
processes and for discrete time reversible Markov processes are also discussed
in \cite{dupliupladol,doldupnyq}.
\end{remark}


\subsection{Infinite swapping}

\label{subsubsec:infinite_swapping}

In this subsection we introduce the $K$-temperature INS estimator, which is
the main object of study. We use the following notation: $\boldsymbol{x}%
\doteq(x_{1},\ldots,x_{K})$ denotes an element in $M^{K}$; for any permutation $\sigma\in\Sigma_{K}$
and $\boldsymbol{x}\in M^{K}$, $\boldsymbol{x}_{\sigma}$ denotes
$(x_{\sigma(1)},\ldots,x_{\sigma(K)})$;
\[
\Delta\doteq\left\{  (x_{1},\ldots,x_{K})\in\mathbb{R}^{K}:1=x_{1}\geq
x_{2}\geq\cdots\geq x_{K}>0\right\}  ;
\]
$\boldsymbol{\alpha}\doteq(\alpha_{1},\ldots,\alpha_{K})$ $\in\Delta$ denotes
the $K$ temperature multiplication factors appearing in the definition of the
$K$-temperature INS estimator.

To define the $K$-temperature INS estimator for a given $\boldsymbol{\alpha,}$
we consider the (symmetric) diffusion process $\{\boldsymbol{X}^{\varepsilon
}(t)\}_{t\geq0}=\{(X_{1}^{\varepsilon}(t),\ldots,X_{K}^{\varepsilon
}(t))\}_{t\geq0}$ on $M^{K}$ satisfying%
\begin{equation}
\left\{
\begin{array}
[c]{l}%
dX_{1}^{\varepsilon}=-\nabla V\left(  X_{1}^{\varepsilon}\right)
dt+\sqrt{2\varepsilon}\sqrt{\rho_{11}^{\varepsilon}/\alpha_{1}+\rho
_{12}^{\varepsilon}/\alpha_{2}+\cdots+\rho_{1K}^{\varepsilon}/\alpha_{K}%
}dW_{1}\\
dX_{2}^{\varepsilon}=-\nabla V\left(  X_{2}^{\varepsilon}\right)
dt+\sqrt{2\varepsilon}\sqrt{\rho_{21}^{\varepsilon}/\alpha_{1}+\rho
_{22}^{\varepsilon}/\alpha_{2}+\cdots+\rho_{2K}^{\varepsilon}/\alpha_{K}%
}dW_{2}\\
\multicolumn{1}{c}{\vdots}\\
dX_{{\small K}}^{\varepsilon}=-\nabla V\left(  X_{K}^{\varepsilon}\right)
dt+\sqrt{2\varepsilon}\sqrt{\rho_{K1}^{\varepsilon}/\alpha_{1}+\rho
_{K2}^{\varepsilon}/\alpha_{2}+\cdots+\rho_{KK}^{\varepsilon}/\alpha_{K}%
}dW_{K}%
\end{array}
\right.  , \label{eqn:dym_INS}%
\end{equation}
where $W_{1},\ldots,W_{K}$ are independent Wiener processes and, for any
$i,j\in\{1,\ldots,K\}$ and $\sigma\in\Sigma_{K},$ $\rho_{ij}^{\varepsilon}$
denotes $\rho_{ij}^{\varepsilon}(\boldsymbol{X}^{\varepsilon}%
(t);\boldsymbol{\alpha})$ with
\[
\rho_{ij}^{\varepsilon}\left(  \boldsymbol{x};\boldsymbol{\alpha}\right)
\doteq\sum\limits_{\sigma:\sigma\left(  j\right)  =i}w^{\varepsilon}\left(
\boldsymbol{x}_{\sigma};\boldsymbol{\alpha}\right)  ,
\]
and with
\begin{equation}
w^{\varepsilon}\left(  \boldsymbol{x};\boldsymbol{\alpha}\right)  \doteq
\frac{\exp[-\frac{1}{\varepsilon}\sum_{\ell=1}^{K}\alpha_{\ell}V\left(
x_{\ell}\right)  ]}{\sum_{\sigma\in\Sigma_{K}}\exp[-\frac{1}{\varepsilon}%
\sum_{\ell=1}^{K}\alpha_{\ell}V\left(  x_{\sigma\left(  \ell\right)  }\right)
]}. \label{eqn:defofw}%
\end{equation}

Using detailed balance, one can show that for each $\varepsilon\in(0,\infty),$
$\nu^{\varepsilon}$ is the unique stationary distribution of $\{\boldsymbol{X}%
^{\varepsilon}(t)\}_{t\geq0},$ where%
\begin{equation}
\nu^{\varepsilon}\left(  d\boldsymbol{x}\right)  \doteq\frac{1}{K!Z_{\nu
}^{\varepsilon}}\sum_{\sigma\in\Sigma_{K}}\exp\left[  -\frac{1}{\varepsilon
}\sum\nolimits_{\ell=1}^{K}\alpha_{\ell}V\left(  x_{\sigma(\ell)}\right)
\right]  d\boldsymbol{x} \label{eqn:stat_dis_sym}%
\end{equation}
with%
\[
Z_{\nu}^{\varepsilon}\doteq\int_{M^{K}}\exp\left[  -\frac{1}{\varepsilon}%
\sum\nolimits_{\ell=1}^{K}\alpha_{\ell}V\left(  x_{\ell}\right)  \right]
d\boldsymbol{x}.
\]

\begin{remark}
For any $\sigma\in\Sigma_{K}$, we also have
\[
Z_{\nu}^{\varepsilon}=\int_{M^{K}}\exp\left[  -\frac{1}{\varepsilon}%
\sum\nolimits_{\ell=1}^{K}\alpha_{\ell}V\left(  x_{\sigma(\ell)}\right)
\right]  d\boldsymbol{x}.
\]

\end{remark}

Let $\zeta^{\varepsilon,T}\left(  d\boldsymbol{x}\right)  $ be the weighted
empirical measure of $\{\boldsymbol{X}^{\varepsilon}(t)\}_{t\geq0}$ over the
time interval of length $T$ given by%

\[
\zeta^{\varepsilon,T}\left(  d\boldsymbol{x}\right)  \doteq\frac{1}{T}\int
_{0}^{T}\sum_{\sigma\in\Sigma_{K}}w^{\varepsilon}\left(  \boldsymbol{X}%
_{\sigma}^{\varepsilon}\left(  t\right)  ;\boldsymbol{\alpha}\right)
\delta_{\boldsymbol{X}_{\sigma}^{\varepsilon}(t)}(d\boldsymbol{x})dt.
\]
It then follows from the ergodic theorem that $\zeta^{\varepsilon,T}$
converges in the topology of weak convergence of probability measures (and in
fact in the stronger $\tau$-topology \cite{dea4}) to $\mu^{\varepsilon
/\alpha_{1}}\times\mu^{\varepsilon/\alpha_{2}}\times\cdots\times
\mu^{\varepsilon/\alpha_{K}}$ w.p.1 as $T\rightarrow\infty.$ The
$K$-temperature INS estimator of $\mu^{\varepsilon}(A)$ with parameter
$\boldsymbol{\alpha}$ over time $T$ is therefore defined by
\begin{align}
\theta_{\text{INS}}^{\varepsilon,T}  &  \doteq\zeta^{\varepsilon,T}(A\times
M^{K-1})\label{eqn:INS_estimator}\\
&  =\frac{1}{T}\int_{0}^{T}\sum_{\sigma\in\Sigma_{K}}w^{\varepsilon}\left(
\boldsymbol{X}_{\sigma}^{\varepsilon}\left(  t\right)  ;\boldsymbol{\alpha
}\right)  1_{A}\left(  X_{\sigma\left(  1\right)  }^{\varepsilon}(t)\right)
dt.\nonumber
\end{align}

\begin{remark}
\label{rmk:quants}Besides $\mu^{\varepsilon}\left(  A\right)  $ for various
choices of $A$, one is also interested in estimating risk sensitive
functionals of the form%
\[
\int_{\mathbb{R}^{d}}e^{-\frac{1}{\varepsilon}F\left(  x\right)  }%
\mu^{\varepsilon}\left(  dx\right)  ,
\]
as well as the analogous integrals with respect to some or all of the higher
temperatures $\varepsilon/\alpha_{\ell}$. However, it is the lowest
temperature which is most challenging, and thus we focus on the problem of
estimating $\mu^{\varepsilon}(A)$ but seek rates of decay for the relative
error that are in some sense uniform in $A$.
\end{remark}

Before discussing a property which makes it heuristically clear why one would
expect $\theta_{\text{INS}}^{\varepsilon,T}$ to do better than $\theta
_{\text{MC}}^{\varepsilon,T},$ we introduce the notion of \textit{\ implied
potential}.

\begin{definition}
\label{Def:1.1}Given a probability density $\phi^{\varepsilon}$ with respect
to Lebesgue measure, we define the \textbf{implied potential} of
$\phi^{\varepsilon}$ to be $-\varepsilon\log\phi^{\varepsilon}.$
\end{definition}

\begin{example}
\label{Ex:1.1}If $\mu^{\varepsilon}$ is a Gibbs measure as in (\ref{eqn:Gibbs}%
), then the implied potential of $\mu^{\varepsilon}$ is $V$, the potential
appearing in the dynamics (\ref{eqn:dym_original}).
\end{example}

From Example \ref{Ex:1.1} we see that implied potential generalizes the notion
of potential. By comparing the implied potential of $\nu^{\varepsilon}$ as in
(\ref{eqn:stat_dis_sym}) and the product measure $\mu^{\varepsilon/\alpha_{1}%
}\times\cdots\times\mu^{\varepsilon/\alpha_{K}}$ with $\mu^{\varepsilon}$ as
in (\ref{eqn:Gibbs}), one can show that the maximum barrier of the implied
potential of the former is smaller than that of the latter provided that
$\alpha_{\ell}<1$ for some $\ell\in\{2,\ldots,K\}$. Since as is well known the
barrier heights determine the exponential time scale of transitions between
neighborhoods of local minimum of the implied potential, this lowering of the
energy barriers is expected to enhance the sampling of the entire space.

While it is intuitive that lowering energy barriers is helpful, it does not by
itself lead to schemes that are in any sense optimal at low temperatures. A
more important and open question in the design of the $K$-temperature INS
estimator is how to select the ensemble of multiplicative factors
$\boldsymbol{\alpha}$. In this paper we not only characterize the low
temperature performance of a $K$-temperature INS estimator with a fixed set of
temperature factors $\boldsymbol{\alpha}$, but we also provide optimal and
nearly optimal temperatures for problems of interest in the same limit. As we
will see, the optimal temperature schedule is dominated by a geometric
relation, and moreover is fairly insensitive to the particular numerical
quantity of interest.

\subsection{Performance measure}

\label{subsec:performance_measure}

In this subsection we discuss the performance measure that will be used to
characterize good performance of an estimator. Let $\{\bar{X}^{\varepsilon
}\}_{\varepsilon\in(0,\infty)}\subset C([0,T];\bar{M})$ be a sequence of
stochastic processes that will be used to define an estimator. For complicated
potentials $V$ we expect these processes to exhibit metastability, which means
that the time required for $\bar{X}^{\varepsilon}$ to visit the various\ parts
of the state space that are needed for good estimation scales like
$T^{\varepsilon}=e^{\frac{1}{\varepsilon}c}$ for some $c>0$. As a consequence,
if we wish to compare algorithms after they have become reasonably accurate we
should assume the simulation interval scales in this way.

As noted in Remark \ref{rmk:quants}, we focus on the problem of estimating
$\mu^{\varepsilon}(A)$ for some set $A\subset M$, and assume there is a
large deviation limit (i.e., $\lim_{\varepsilon\rightarrow0}\varepsilon\log
\mu^{\varepsilon}(A)$ exists).

\begin{definition}
\label{Def:1.8}An estimator $\theta^{\varepsilon,T^{\varepsilon}}$ of
$\mu^{\varepsilon}(A)$ is called \textbf{essentially unbiased}  if there is
$c_{0}\in(0,\infty)$ such that for any $x\in\bar{M}$
\[
\liminf_{\varepsilon\rightarrow0}-\varepsilon\log\left(  \left\vert
E_{x}\theta^{\varepsilon,T^{\varepsilon}}-\mu^{\varepsilon}(A)\right\vert
\right)  \geq\lim_{\varepsilon\rightarrow0}-\varepsilon\log\mu^{\varepsilon
}(A)+c_{0}.
\]

\end{definition}

This says that the bias of $\theta^{\varepsilon,T^{\varepsilon}}$ (i.e., the
difference between $E_{x}\theta^{\varepsilon,T^{\varepsilon}}$ and
$\mu^{\varepsilon}(A)$) decays strictly faster than $\mu^{\varepsilon}(A)$ as
$\varepsilon\rightarrow0$.

\begin{definition}
\label{Def:1.9}Given an estimator $\theta^{\varepsilon,T^{\varepsilon}},$ the
\textbf{lower bound on the decay rate of the variance per unit time} of
$\theta^{\varepsilon,T^{\varepsilon}}$ is defined as
\[
\inf_{x\in\bar{M}}\liminf_{\varepsilon\rightarrow0}-\varepsilon\log\left(
\mathrm{Var}_{x}\left(  \theta^{\varepsilon,T^{\varepsilon}}\right)
T^{\varepsilon}\right)  .
\]
If the $\liminf$ is a limit that does not depend on $x$, then we call it
\textbf{the decay rate of the variance per unit time.}
\end{definition}

\begin{remark}
\label{Rmk:1.2}In this paper, we seek to optimize the {decay rate of the
variance per unit time} (often referred to simply as the decay rate of the
variance), but only among estimators that are essentially unbiased. A
criticism is that essential unbiasedness depends on the time scaling
$T^{\varepsilon}$, which may itself depend on the estimator. One may be
concerned that improving the decay rate somehow lengthens the time till
essential unbiasedness. However, as we discuss in a moment the selection of
INS temperature parameters that lower the decay rate of the variance also
reduce the growth rate of this time. Thus there is no conflict in using the
decay rate of the variance as the sole performance measure.
\end{remark}

\begin{remark}
\label{rmk:benchmark}We will take as our ideal performance benchmark a decay
rate of the variance exactly \textit{twice} $\lim_{\varepsilon\rightarrow
0}-\varepsilon\log\mu^{\varepsilon}(A)$. The reason is as follows. Suppose
that we measure errors by the standard deviation (and assume essential
unbiasedness). If we achieve this best possible decay rate, then the amount of
time needed for the numerical error $\theta^{\varepsilon,T^{\varepsilon}}%
-\mu^{\varepsilon}(A)$ to be comparable to $\mu^{\varepsilon}(A)$ itself
becomes subexponential in $\varepsilon$. See Remark \ref{rmk:bench2} for a
more detailed statement.

Strictly speaking, $2\lim_{\varepsilon\rightarrow0}-\varepsilon\log
\mu^{\varepsilon}(A)$ is not the best possible decay rate of the variance, but
rather the best practically achievable decay rate. Indeed, in analogy with the
zero variance estimator that one can define when using importance sampling for
rare event estimation \cite{buddup4, blalam}, it is possible to define
estimators with a larger decay rate. But these are not useful since they
require information that is not typically available, such as knowing
$\mu^{\varepsilon}(A)$. \textit{Hence the aim in the design of an INS
algorithm is to obtain a lower bound on the decay rate of the variance that is
close to this maximum practical value.}
\end{remark}

\section{Statement of the Main Results}

\label{sec:statement_of_the_main_results}

In this section we state the main results on the performance and optimal
design of the INS scheme in the low temperature limit. The proofs involve
applying the results of \cite{dupwu}, and then simplifying the variational
problem that characterizes the decay rate of the variance.

We present two main results. The first considers the restricted setting of a
simple two well model. In this case we can obtain a very precise reduction of
the variational problem. Using this simplified expression, we can then
probe in some detail the question of how INS achieves variance reduction. Our
interest in this model is twofold. One reason is that with an exact expression
(rather than a tight bound) for the solution to the variational problem we can
explore issues relating to how variance reduction is obtained through
swapping. The second is that it properly suggests very useful bounds for the
general model. (While exact simplifications are possible there as well, the
number of cases quickly becomes unwieldy as the number of local minima
increases.) Since the proof of the reduction is long, we refer the reader to
\cite{wu2} for details.

The second main result is concerned with temperature selection when there are
an arbitrary number of wells. Owing to this generality, we do not attempt to
find the exact optimizer, but rather show that the geometric relation for
temperatures suggested by the two well model allows one to get arbitrarily
close to the benchmark articulated in Remark \ref{rmk:benchmark}, with the
\textquotedblleft gap\textquotedblright\ between the two taking the form $(1/2)^{K-1}(V(A)+B)$ for some positive constant $B$, and therefore decaying
geometrically in the number of temperatures. The proof of this result is also somewhat
detailed, and is started in Section \ref{sec:lower_bound} and completed in Section \ref{sec:6}.
In particular,
the results of Section \ref{sec:6} show how $B$ depends in a natural and intuitive way on properties of the original potential $V$.

To apply the results of \cite{dupwu} we need to know that the INS process
defined in (\ref{eqn:dym_INS}) satisfies a large deviation principle on
$C([0,T]:M^{K})$ for arbitrary $T\in(0,\infty)$. This is not straightforward,
owing to the fact that the diffusion coefficients involve $w^{\varepsilon
}\left(  \boldsymbol{x};\boldsymbol{\alpha}\right)  $ defined in
(\ref{eqn:defofw}), which become discontinuous in $\boldsymbol{x}$ as
$\varepsilon\rightarrow0$. Hence one is concerned with the large deviation
properties of processes with \textit{discontinuous statistics}
\cite{dupellwei,dupell2}.

The sorts of discontinuities encountered are in fact analogous to those
encountered in the large deviation analysis of stochastic networks, such as
multiclass queueing networks. A general approach to proving that a large
deviation principle holds for stochastic networks appears in \cite{dupell2},
and can be adapted to the INS model (\ref{eqn:dym_INS}). It is important to
note that we do not need the precise form of the rate function, but only that
the LDP holds with some rate function and basic qualitative properties. This
is because with the INS model we already have an expression for the stationary
distribution. Various quantities are defined in \cite{dupwu} using the rate
function that allow the identification of the Freidlin-Wentzell quasipotential
and related objects. For the INS model the explicit formula for the stationary
distribution directly identifies the quasipotential, thereby eliminating the
need for the explicit form of the rate function.
The technique of \cite{dupell2} is in fact
ideally suited to showing the existence of an LDP without necessarily having
an expression for the rate function. We will assume the needed existence of
the large deviation principle, and outline in the Appendix how one can adapt
the argument of \cite{dupell2} to (\ref{eqn:dym_INS}) for the case of the two
well model, which features the main issues in the proof of the general case.

\subsection{Two well model}

Our first result considers the setting of a double-well potential. Let
$V:\mathbb{R}\rightarrow\mathbb{R}$ ($d=1$) be as in the following figure.

\begin{figure}[h]
    \centering
    \includegraphics[width=0.9\textwidth]{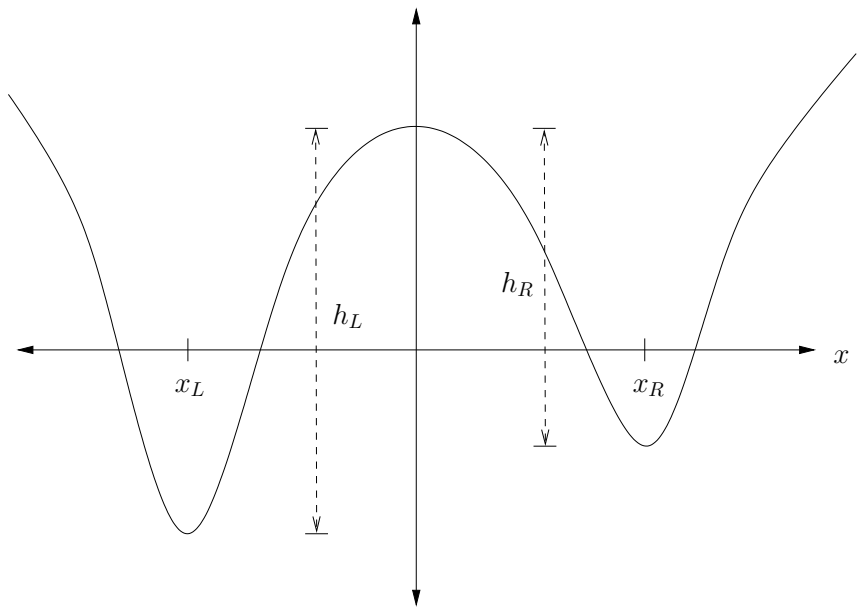}
    \caption{Asymmetric two well model}
    \label{fig:well}
\end{figure}

Assume $V$ satisfies the following condition.

\begin{condition}
\label{Con:2.1}$V$ is a bounded $C^{2}$ function and

\begin{itemize}
\item $V$ is defined on a compact interval $D\subset\mathbb{R}$ and extended
periodically as a $C^{2}$ function.

\item $V$ has only two local minima at $x_{L}$ and $x_{R}$ with values
$V(x_{L})<V(x_{R})$.

\item $V$ has only one local maximum at $0\in(x_{L},x_{R})$.

\item $V(x_{L})=0,$ $V(0)=h_{L}$ and $V(x_{R})=h_{L}-h_{R}>0.$

\item $\inf_{x\in\partial D}V(x)>h_{L}.$
\end{itemize}
\end{condition}

\begin{remark}
As noted previously, the use of periodic boundary conditions is common in
numerical implementation. It is assumed that the boundary is away from the
neighborhoods of the equilibrium points of interest, and that the potential at
the boundary is high enough that transitions across the boundary are
unimportant. For our purposes, this means that the relevant large deviation
calculations involve only paths that remain in $D$.
\end{remark}

\begin{remark}
\label{rmk:time_horizon} In the analysis of $\theta_{\text{INS}}%
^{\varepsilon,T^{\varepsilon}}$ we will assume $T^{\varepsilon}$ satisfies
$T^{\varepsilon}=e^{\frac{1}{\varepsilon}c}$ with $c>\alpha_{K}h_{L}$. Recall
that $\alpha_{K}$ is the smallest of the $\alpha_{\ell}$, and hence determines
the highest temperature. As we will see, this condition ensures asymptotic unbiasedness.
\end{remark}

The next result follows from \cite[Theorems 4.3 and 4.5]{dupwu}. The theorem, in
particular, characterizes the decay rate of the variance for the INS estimator
for a given $\boldsymbol{\alpha}$. For a set $A$ let $V(A)\doteq\inf_{x\in
A}V(x)$, and also define $\mathcal{K}\doteq\{1,2,\ldots,K+1\}$.

\begin{theorem}
\label{Thm:2.1}Assume Condition \ref{Con:2.1}, and that the process defined by
(\ref{eqn:dym_INS}) satisfies a large deviation principle that is uniform with
respect to initial conditions \cite[Section 1.2]{buddup4}. Then for any closed
interval $A\subset D$ with $x_{L}\notin A$ and $A=\bar{A}^{\circ},$
\begin{equation}
\theta_{\mathrm{INS}}^{\varepsilon,T^{\varepsilon}}=\frac{1}{T^{\varepsilon}%
}\int_{0}^{T^{\varepsilon}}\sum_{\sigma\in\Sigma_{K}}w^{\varepsilon}\left(
\boldsymbol{X}_{\sigma}^{\varepsilon}\left(  t\right)  ;\boldsymbol{\alpha
}\right)  1_{A}\left(  X_{\sigma\left(  1\right)  }^{\varepsilon}(t)\right)
dt \label{eqn:defoftheps}%
\end{equation}
is an essentially unbiased estimator of $\mu^{\varepsilon}(A),$ where
$w^{\varepsilon}\left(  \boldsymbol{x};\boldsymbol{\alpha}\right)  $ is given
by (\ref{eqn:defofw}). Moreover, for any   $\boldsymbol{\alpha}\in\Delta
$ and $\boldsymbol{x} \in \mathbb{R}^K$, we have
\[
\liminf_{\varepsilon\rightarrow0}-\varepsilon\log\left(  \mathrm{Var}%
_{\boldsymbol{x}}\left(  \theta_{\mathrm{INS}}^{\varepsilon,T^{\varepsilon
}}\right)  T^{\varepsilon}\right)  \geq\left\{
\begin{array}
[c]{c}%
\hat{r}_{1}\left(  \boldsymbol{\alpha}\right)  \wedge\hat{r}_{3}\left(
\boldsymbol{\alpha}\right) ,\text{ if }A\subset(-\infty,0]\\
\hat{r}_{1}\left(  \boldsymbol{\alpha}\right)  \wedge\hat{r}_{2}\left(
\boldsymbol{\alpha}\right) ,\text{ if }A\subset\lbrack0,\infty)
\end{array}
,\right.
\]
where
\[
\hat{r}_{1}\left(  \boldsymbol{\alpha}\right)  \doteq\inf
\nolimits_{\boldsymbol{x}\in A\times%
\mathbb{R}
^{K-1}}\left[  2\sum_{\ell=1}^{K}\alpha_{\ell}V\left(  x_{\ell}\right)
-\min_{\sigma\in\Sigma_{K}}\left\{  \sum_{\ell=1}^{K}\alpha_{\ell}V\left(
x_{\sigma\left(  \ell\right)  }\right)  \right\}  \right]  ,
\]%
\[
\hat{r}_{2}\left(  \boldsymbol{\alpha}\right)  \doteq\min_{i\in\mathcal{K}%
\setminus\{1\}}\left\{  2V\left(  A\right)  +\left[  \sum_{\ell=1}^{i-2}%
\alpha_{K-\ell+1}-\alpha_{K-i+2}\right]  \left(  h_{L}-h_{R}\right)  \right\}
-\alpha_{K}h_{R},
\]
and
\[
\hat{r}_{3}\left(  \boldsymbol{\alpha}\right)  \doteq2V\left(  A\right)
-\alpha_{K}h_{L}.
\]
\begin{remark}
As mentioned in  \cite[Conjecture 4.10]{dupwu}, we expect that the lower bound is tight. The proof of the conjecture for a special case is outlined in \cite[Section 11]{dupwu}.
\end{remark}

\end{theorem}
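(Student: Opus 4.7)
The plan is to invoke the general machinery of \cite{dupwu}, specifically Theorems 4.3 and 4.5 there, which for a small-noise process satisfying a uniform path-space LDP reduces the decay rate of the variance of the time-average estimator to an explicit variational problem involving (a) the Freidlin--Wentzell quasipotential $W$ of the driving process, (b) a low-temperature log-asymptotic of the integrand $f^{\varepsilon}(\boldsymbol{x}) = \sum_{\sigma}w^{\varepsilon}(\boldsymbol{x}_{\sigma};\boldsymbol{\alpha})1_{A}(x_{\sigma(1)})$, and (c) the asymptotic $V(A) = \lim_{\varepsilon\to 0}(-\varepsilon\log\mu^{\varepsilon}(A))$. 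Although the INS rate function is difficult to write down because of the discontinuous statistics caused by $w^{\varepsilon}$, the quasipotential is \emph{not} needed through the rate function: reversibility and the explicit formula \eqref{eqn:stat_dis_sym} for $\nu^{\varepsilon}$ identify it directly as
\[
W(\boldsymbol{x}) = \min_{\sigma\in\Sigma_{K}}\sum_{\ell=1}^{K}\alpha_{\ell}V(x_{\sigma(\ell)}) - \min_{\boldsymbol{y}\in M^{K}}\min_{\sigma\in\Sigma_{K}}\sum_{\ell=1}^{K}\alpha_{\ell}V(y_{\sigma(\ell)}).
\]
This is the reason the theorem's hypotheses only require the \emph{existence} of the LDP.

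With that substitution, I would handle essential unbiasedness first. The bias is controlled by the mixing rate of the INS process, which in the low-temperature limit is governed by the largest barrier of the implied potential of $\nu^{\varepsilon}$; for the two-well model this barrier is $\alpha_{K}h_{L}$. The hypothesis $T^{\varepsilon}=e^{c/\varepsilon}$ with $c>\alpha_{K}h_{L}$ (Remark \ref{rmk:time_horizon}) therefore forces the bias to be $o(\mu^{\varepsilon}(A))$ on the exponential scale, giving essential unbiasedness. For the variance lower bound, I would next insert $f^{\varepsilon}$ into the variational formula. Because $w^{\varepsilon}$ concentrates on the $\sigma$ minimizing $\sum_{\ell}\alpha_{\ell}V(x_{\sigma(\ell)})$, we have
\[
-\varepsilon\log f^{\varepsilon}(\boldsymbol{x})\to \sum_{\ell=1}^{K}\alpha_{\ell}V(x_{\ell}) - \min_{\sigma:\,x_{\sigma(1)}\in A}\sum_{\ell=1}^{K}\alpha_{\ell}V(x_{\sigma(\ell)})
\]
when the right-hand side is finite. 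Combining this with the $W$-contribution and the doubling from variance, the three candidate terms should emerge: $\hat{r}_{1}$ as the \emph{local} contribution from configurations $\boldsymbol{x}\in A\times M^{K-1}$, and $\hat{r}_{2},\hat{r}_{3}$ as the \emph{barrier-crossing} contributions measured against the offset $V(A)$.

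The step I expect to be the main obstacle is the combinatorial case analysis reducing the variational problem to the three closed-form expressions, especially $\hat{r}_{2}$. For each vector $\boldsymbol{x}$ whose components asymptotically sit at either $x_{L}$ or $x_{R}$, one must determine which $\sigma\in\Sigma_{K}$ minimizes $\sum_{\ell}\alpha_{\ell}V(x_{\sigma(\ell)})$; given the ordering $1=\alpha_{1}\geq\alpha_{2}\geq\cdots\geq\alpha_{K}>0$, the optimal pairing matches the largest $\alpha_{\ell}$ with the deepest well, and any deviation costs a multiple of $h_{L}-h_{R}$ scaled by a partial sum of $\alpha_{\ell}$'s. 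Parameterizing by the number $i-1$ of replicas excited into the shallow well and optimizing over this index produces precisely the $\min_{i\in\mathcal{K}\setminus\{1\}}$ appearing in $\hat{r}_{2}$; the separate case $A\subset(-\infty,0]$ reduces instead to the single-barrier cost $\hat{r}_{3}$ because reaching the left well requires the highest-temperature replica to surmount $h_{L}$. This bookkeeping is exactly what is carried out in detail in \cite{wu2}, and the structure of the proof is the same reversible-diffusion reduction used in \cite[Theorems 4.3, 4.5]{dupwu}, applied to the explicit INS invariant measure.
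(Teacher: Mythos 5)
Your plan follows the paper's strategy closely: invoke the variance lower-bound machinery of \cite[Theorems 4.3, 4.5]{dupwu}; sidestep the intractable INS path-space rate function by reading the quasipotential directly off the explicit reversible stationary density \eqref{eqn:stat_dis_sym} (this is exactly Lemma \ref{Lem:4.1}); obtain essential unbiasedness from $T^{\varepsilon}=e^{c/\varepsilon}$ with $c>\alpha_K h_L$; and reduce the variational problem to the closed forms $\hat r_1,\hat r_2,\hat r_3$ by a combinatorial case analysis over permutations and well occupancies, deferring the detailed bookkeeping to \cite{wu2}. That is precisely what the paper does; Theorem \ref{Thm:2.1} is presented as a consequence of \cite[Theorems 4.3, 4.5]{dupwu} with the reduction carried out in \cite{wu2}.

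One correction before you carry out the reduction: your displayed formula for the log-asymptotic of the integrand has the minimizations in the wrong places. Writing $U(\boldsymbol{x})\doteq\min_{\sigma\in\Sigma_K}\sum_{\ell}\alpha_{\ell}V(x_{\sigma(\ell)})$, we have $-\varepsilon\log w^{\varepsilon}(\boldsymbol{x}_{\sigma};\boldsymbol{\alpha})\to \sum_{\ell}\alpha_{\ell}V(x_{\sigma(\ell)})-U(\boldsymbol{x})$, so that
\[
-\varepsilon\log\Bigl(\textstyle\sum_{\sigma}w^{\varepsilon}(\boldsymbol{x}_{\sigma};\boldsymbol{\alpha})1_A(x_{\sigma(1)})\Bigr)\to \min_{\sigma:\,x_{\sigma(1)}\in A}\sum_{\ell}\alpha_{\ell}V(x_{\sigma(\ell)})-U(\boldsymbol{x}),
\]
not $\sum_{\ell}\alpha_{\ell}V(x_{\ell})-\min_{\sigma:\,x_{\sigma(1)}\in A}\sum_{\ell}\alpha_{\ell}V(x_{\sigma(\ell)})$ as you wrote (your expression can even be negative, which is impossible for $-\varepsilon\log$ of a quantity bounded by $1$). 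The paper in fact treats each $\sigma$-term of $\theta_{\mathrm{INS}}^{\varepsilon,T^{\varepsilon}}$ separately, using for the identity permutation $f(\boldsymbol{x},\boldsymbol{\alpha})=\sum_{\ell}\alpha_{\ell}V(x_{\ell})-U(\boldsymbol{x})$ on the compact set $A\times M^{K-1}$ and analogously for the others, which avoids the constrained minimum altogether and matches $\hat r_1$ cleanly. This slip would have distorted the subsequent case analysis if carried through literally, but it does not affect the soundness of your overall plan.
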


Recall that the optimal decay rate of the variance per unit time is twice the large
deviation decay rate of $\mu^{\varepsilon}(A)$, which is $V(A)$. The next two
results identify optimizers over $\boldsymbol{\alpha}$ for the relevant
variational problems. Note that in all cases we can get close to the best
possible decay rate by choosing $K$ appropriately, and in fact the gap goes to
zero geometrically in $K$. For example, $K=7$ will to get within 2\% of the
maximum rate of $2V(A)$.

\begin{theorem}
\label{Thm:2.2}Assume the conditions of Theorem \ref{Thm:2.1}. For any closed
set $A\subset(-\infty,0]$ with $x_{L}\notin A,$ if $V(A)\geq h_{L},$ then%
\[
\sup_{\boldsymbol{\alpha}\in\Delta}\left[  \hat{r}_{1}\left(
\boldsymbol{\alpha}\right)  \wedge\hat{r}_{3}\left(  \boldsymbol{\alpha
}\right)  \right]  =2V\left(  A\right)  -\left(  1/2\right)  ^{K-1}V\left(
A\right)
\]
with the optimal $\boldsymbol{\alpha}^{\ast}=\left(  1,1/2,\ldots,\left(
1/2\right)  ^{K-2},\left(  1/2\right)  ^{K-1}\right)  \in\Delta.$ If $V(A)\leq
h_{L},$ then
\[
\sup_{\boldsymbol{\alpha}\in\Delta}\left[  \hat{r}_{1}\left(
\boldsymbol{\alpha}\right)  \wedge\hat{r}_{3}\left(  \boldsymbol{\alpha
}\right)  \right]  =2V\left(  A\right)  -\left(  1/2\right)  ^{K-2}\left(
\frac{h_{L}}{V(A)+h_{L}}\right)  V(A)
\]
with the optimal $\boldsymbol{\alpha}^{\ast}=\left(  1,1/2,\ldots,\left(
1/2\right)  ^{K-2},\frac{V(A)}{V(A)+h_{L}}\left(  1/2\right)  ^{K-2}\right)
\in\Delta.$
\end{theorem}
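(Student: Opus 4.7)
The plan is to prove Theorem \ref{Thm:2.2} by exhibiting the claimed value at the proposed $\boldsymbol{\alpha}^{\ast}$ and then matching it by a universal upper bound on $\sup_{\boldsymbol{\alpha}\in\Delta}(\hat{r}_{1}\wedge\hat{r}_{3})$ built from a short list of test configurations together with one duality identity. Throughout, set $\Phi(\boldsymbol{x};\boldsymbol{\alpha})\doteq 2\sum_{\ell}\alpha_{\ell}V(x_{\ell})-\min_{\sigma}\sum_{\ell}\alpha_{\ell}V(x_{\sigma(\ell)})$, so that $\hat{r}_{1}(\boldsymbol{\alpha})=\inf_{\boldsymbol{x}\in A\times\mathbb{R}^{K-1}}\Phi(\boldsymbol{x};\boldsymbol{\alpha})$, and recall $\hat{r}_{3}(\boldsymbol{\alpha})=2V(A)-\alpha_{K}h_{L}$.

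The first step is to verify that $\hat{r}_{1}(\boldsymbol{\alpha}^{\ast})\wedge\hat{r}_{3}(\boldsymbol{\alpha}^{\ast})$ equals the claimed value. The decisive algebraic feature of the proposed optimizer is the identity $\alpha_{\ell}^{\ast}=2\alpha_{\ell+1}^{\ast}$ (for every $\ell$ in Case 1, and for $\ell=1,\ldots,K-2$ in Case 2). Expanding $\Phi$ after sorting $V(x_{1}),\ldots,V(x_{K})$ shows that this identity forces the coefficient of each free variable $V(x_{\ell})$, $\ell\geq 2$, to vanish, so the infimum is attained along configurations with $x_{\ell}\in\{x_{L},\arg\min_{A}V\}$. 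A short computation then yields $\hat{r}_{1}(\boldsymbol{\alpha}^{\ast})=V(A)(2-(1/2)^{K-1})$ in Case 1 and $\hat{r}_{1}(\boldsymbol{\alpha}^{\ast})=2V(A)-(1/2)^{K-2}V(A)h_{L}/(V(A)+h_{L})$ in Case 2. In Case 1, $\hat{r}_{3}(\boldsymbol{\alpha}^{\ast})=2V(A)-(1/2)^{K-1}h_{L}$ dominates since $V(A)\geq h_{L}$; in Case 2 the specific choice of $\alpha_{K}^{\ast}$ is precisely what makes $\hat{r}_{1}(\boldsymbol{\alpha}^{\ast})=\hat{r}_{3}(\boldsymbol{\alpha}^{\ast})$.

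For the matching upper bound I consider the family $\boldsymbol{x}^{(k)}\in A\times\mathbb{R}^{K-1}$, $k=0,\ldots,K-1$, where $x_{1}$ attains $V(A)$, $k$ of the remaining coordinates equal $\arg\min_{A}V$ (value $V(A)$), and the other $K-1-k$ equal $x_{L}$ (value $0$). Evaluating $\min_{\sigma}$ by rearrangement gives
\[
\Phi(\boldsymbol{x}^{(k)};\boldsymbol{\alpha})=V(A)\,g_{k}(\boldsymbol{\alpha}),\qquad g_{k}(\boldsymbol{\alpha})\doteq 2-\alpha_{K-k}+\sum_{j=K-k+1}^{K}\alpha_{j},
\]
so $\hat{r}_{1}\leq V(A)\min_{k}g_{k}$. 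Setting $b_{k}\doteq 2-g_{k}=\alpha_{K-k}-\sum_{j>K-k}\alpha_{j}$ (in particular $b_{0}=\alpha_{K}$), one verifies directly that with $c_{0}=2^{K-2}$, $c_{k}=2^{K-k-2}$ for $1\leq k\leq K-2$, and $c_{K-1}=1$, the identity $\sum_{k=0}^{K-1}c_{k}b_{k}(\boldsymbol{\alpha})=\alpha_{1}=1$ holds for every $\boldsymbol{\alpha}\in\Delta$, with $\sum_{k}c_{k}=2^{K-1}$. In Case 1 this telescoping identity immediately yields $\max_{k}b_{k}\geq 1/2^{K-1}$, hence $\hat{r}_{1}\leq V(A)(2-(1/2)^{K-1})$ uniformly in $\boldsymbol{\alpha}$, closing that case.

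Case 2 is then closed by exploiting the coincidence that $b_{0}=\alpha_{K}$ is precisely what controls $\hat{r}_{3}$. If $\hat{r}_{1}\wedge\hat{r}_{3}\geq 2V(A)-\rho$, then $b_{k}\leq\rho/V(A)$ for every $k\geq 1$ (from the $\hat{r}_{1}$ bound) and also $b_{0}=\alpha_{K}\leq\rho/h_{L}$ (from the $\hat{r}_{3}$ bound). Inserting these into $\sum_{k}c_{k}b_{k}=1$ yields $1\leq c_{0}\rho/h_{L}+(\sum_{k\geq 1}c_{k})\rho/V(A)=2^{K-2}\rho(1/h_{L}+1/V(A))$, hence $\rho\geq (1/2)^{K-2}V(A)h_{L}/(V(A)+h_{L})$, as claimed. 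The main obstacle is pinning down the correct weights $c_{k}$: a naive uniform choice produces only the loose constant $1/(2^{K}-1)$ or $1/K$ and misses the geometric decay rate $(1/2)^{K-1}$ entirely; moreover, the Case 2 improvement only becomes visible once one notices that $b_{0}$ literally equals $\alpha_{K}$, which is what allows the $\hat{r}_{3}$-constraint to be substituted into the same telescoping identity.
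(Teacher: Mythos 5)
Your proof is correct, and it takes a route that is genuinely different from (and in some respects more transparent than) the one the paper uses for the corresponding multi-well calculation (the unnumbered lemma in Section~\ref{subsec_5.1}). The paper's upper bound is certified by a \emph{single} primal test configuration $\boldsymbol{V}^{\ast}$ with the geometric values $V_{\ell}^{\ast}=(1/2)^{K-\ell+1}V_{0}$; that choice is engineered so that, after summation by parts, the objective becomes \emph{constant} in $\boldsymbol{\alpha}$, equal to $(2-(1/2)^{K-1})V_{0}$. You instead use the $K$ extreme test configurations $\boldsymbol{x}^{(k)}$, with each free coordinate at either $x_{L}$ (value $0$) or the minimizer of $V$ over $A$ (value $V(A)$), and close the bound with the telescoping linear identity $\sum_{k=0}^{K-1}c_{k}b_{k}(\boldsymbol{\alpha})=\alpha_{1}=1$, $\sum_{k}c_{k}=2^{K-1}$, which I verified symbolically for general $K$. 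These two certificates are dual to each other in the linear-programming sense (the $\boldsymbol{x}^{(k)}$ are precisely the extreme points of $\{0\le V_{2}\le\cdots\le V_{K}\le V_{1}\}$ where the paper's linear objective is minimized), so neither gives a ``cheaper'' proof of the first case; but your formulation has a real payoff in the second case. Because $b_{0}(\boldsymbol{\alpha})=\alpha_{K}$ is literally the quantity governing $\hat{r}_{3}$, you can substitute the constraint coming from $\hat{r}_{3}$ into the \emph{same} identity as the constraint coming from $\hat{r}_{1}$, yielding $1\le 2^{K-2}\rho(1/h_{L}+1/V(A))$ in one line and hence the exact Case~2 gap $(1/2)^{K-2}V(A)h_{L}/(V(A)+h_{L})$. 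The paper treats the analogous balancing with $\hat{r}_{3}$ only by deferring to \cite{wu2}; your argument makes that balancing mechanism visible and elementary. Two small points worth tightening before this could stand on its own: (i) the ``short computation'' that $\hat{r}_{1}(\boldsymbol{\alpha}^{\ast})$ equals the claimed value is really the nontrivial reduction via the $N_{\tau}$ decomposition, the rearrangement inequality, and summation by parts (the content of the Section~\ref{subsec_5.1} lemma); the test configurations alone only give the matching upper bound $\hat{r}_{1}(\boldsymbol{\alpha}^{\ast})\le V(A)(2-(1/2)^{K-1})$, and the reverse inequality needs that reduction explicitly; (ii) in Case~2 the coefficient $2\alpha_{K}^{\ast}-\alpha_{K-1}^{\ast}$ of the largest free variable is strictly negative, so the infimum over the sorted variables is attained with $V_{K}=V_{1}=V(A)$, not with $V_{K}\in\{0,V(A)\}$ chosen freely; this is a one-line addition but should be stated, since it is exactly the mechanism by which $h_{L}$ enters $\hat{r}_{1}(\boldsymbol{\alpha}^{\ast})$ to produce the coincidence $\hat{r}_{1}(\boldsymbol{\alpha}^{\ast})=\hat{r}_{3}(\boldsymbol{\alpha}^{\ast})$.
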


\begin{theorem}
\label{Thm:2.3}Assume the conditions of Theorem \ref{Thm:2.1}. For any closed
set $A\subset\lbrack0,\infty)$ and if $h_{L}\geq2h_{R}$ or $V(A)\geq h_{L}$,
then
\[
\sup_{\boldsymbol{\alpha}\in\Delta}\left[  \hat{r}_{1}\left(
\boldsymbol{\alpha}\right)  \wedge\hat{r}_{2}\left(  \boldsymbol{\alpha
}\right)  \right]  =2V\left(  A\right)  -\left(  1/2\right)  ^{K-1}\left(
V(A)\vee h_{L}\right)
\]
with the optimal $\boldsymbol{\alpha}^{\ast}=\left(  1,1/2,\ldots,\left(
1/2\right)  ^{K-2},\left(  1/2\right)  ^{K-1}\right)  \in\Delta.$ If
$h_{L}\leq2h_{R}$ and $V(A)\in\lbrack h_{L}-h_{R},h_{L}]$, then
\[
\sup_{\boldsymbol{\alpha}\in\Delta}\left[  \hat{r}_{1}\left(
\boldsymbol{\alpha}\right)  \wedge\hat{r}_{2}\left(  \boldsymbol{\alpha
}\right)  \right]  =2V\left(  A\right)  -\left(  1/2\right)  ^{K-2}\left(
\frac{h_{R}}{V(A)-(h_{L}-2h_{R})}\right)  V(A)
\]
with the optimal $\boldsymbol{\alpha}^{\ast}=\left(  1,1/2,\ldots,\left(
1/2\right)  ^{K-2},\frac{V(A)-(h_{L}-h_{R})}{V(A)-(h_{L}-2h_{R})}\left(
1/2\right)  ^{K-2}\right)  \in\Delta.$
\end{theorem}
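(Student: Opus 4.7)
The plan is to reduce the max-min over $\boldsymbol{\alpha}\in\Delta$ to a scalar optimization in $\alpha_K$ by first expressing $\hat{r}_1$ as a minimum of affine functions of $\boldsymbol{\alpha}$ and then using a perturbation argument to force the remaining coordinates onto the geometric sequence $\alpha_\ell=(1/2)^{\ell-1}$. Writing $v_\ell\doteq V(x_\ell)$ and using the rearrangement identity $\min_\sigma\sum_\ell\alpha_\ell v_{\sigma(\ell)}=\sum_\ell\alpha_\ell v_{(\ell)}$ (pair the largest $\alpha$'s with the smallest $v$'s), the integrand of $\hat{r}_1$ is piecewise linear in $\boldsymbol{v}$, so its infimum is attained at configurations whose $v_\ell$'s lie in the finite set $\{0,h_L-h_R,V(A)\}$ of relevant level values. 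Since $A\subset[0,\infty)$ forces $V(A)\geq h_L-h_R$ under Condition \ref{Con:2.1}, a case analysis reduces to the extremal configurations that place $V(A)$ at coordinate $1$ together with $k-1$ of the smallest-$\alpha$ coordinates (and zeros elsewhere), giving
\[
\hat{r}_1(\boldsymbol{\alpha})=V(A)\min_{1\le k\le K}g_k(\boldsymbol{\alpha}),\qquad g_k(\boldsymbol{\alpha})\doteq 2+\sum_{\ell=K-k+2}^{K}\alpha_\ell-\alpha_{K-k+1}.
\]
Since $\hat{r}_2$ is already a minimum of $K$ affine functions of $\boldsymbol{\alpha}$, the objective $\hat{r}_1\wedge\hat{r}_2$ is piecewise affine on the convex set $\Delta$.

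The next step is to show that any maximizer satisfies $\alpha_\ell=(1/2)^{\ell-1}$ for $\ell\leq K-1$. The key identity $g_{k+1}-g_k=2\alpha_{K-k+1}-\alpha_{K-k}$, together with the analogous increments that drive the inner minimum of $\hat{r}_2$, all vanish identically precisely when $\boldsymbol{\alpha}$ is pure geometric. A direct perturbation argument then shows that at any candidate optimizer with some $\alpha_\ell\ne(1/2)^{\ell-1}$ for $\ell<K$, an infinitesimal move toward the geometric profile strictly increases the smallest active piece of $\hat{r}_1\wedge\hat{r}_2$ without strictly decreasing any other active piece, contradicting optimality. This reduces the problem to a 1D optimization in $t\doteq\alpha_K$.

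Substituting $(\alpha_1,\ldots,\alpha_K)=(1,1/2,\ldots,(1/2)^{K-2},t)$, direct computation gives piecewise linear forms sharing a breakpoint at $t=(1/2)^{K-1}$:
\[
\hat{r}_1(t)=\begin{cases}(2-(1/2)^{K-2}+t)V(A), & t\le (1/2)^{K-1},\\ (2-t)V(A), & t\ge (1/2)^{K-1},\end{cases}
\]
\[
\hat{r}_2(t)=\begin{cases}2V(A)-(1/2)^{K-2}(h_L-h_R)+t(h_L-2h_R), & t\le (1/2)^{K-1},\\ 2V(A)-th_L, & t\ge (1/2)^{K-1}.\end{cases}
\]
If $h_L\geq 2h_R$ or $V(A)\geq h_L$, then $\hat{r}_2$ is non-decreasing on $t\le(1/2)^{K-1}$ and both pieces are decreasing on $t\ge(1/2)^{K-1}$, so the maximum of the min occurs at $t^*=(1/2)^{K-1}$, yielding $2V(A)-(1/2)^{K-1}(V(A)\vee h_L)$. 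Otherwise $h_L<2h_R$ and $V(A)\in[h_L-h_R,h_L]$; on $t\le (1/2)^{K-1}$, $\hat{r}_1$ is strictly increasing while $\hat{r}_2$ is strictly decreasing in $t$, so the optimum lies at the intersection $\hat{r}_1(t)=\hat{r}_2(t)$, which solves to the claimed $t^*$. Applying the algebraic identity $(h_L-h_R)(V(A)-(h_L-2h_R))-(V(A)-(h_L-h_R))(h_L-2h_R)=h_R V(A)$ then produces the second supremum formula.

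The main obstacle is the reduction to a one-parameter family: simultaneously tracking which inner-minimum indices of $\hat{r}_1$ and $\hat{r}_2$ are active along a perturbation of $\boldsymbol{\alpha}$, and verifying that any non-geometric deviation at indices $\ell<K$ strictly decreases $\hat{r}_1\wedge\hat{r}_2$, is the combinatorial heart of the argument. The coincidence that both families of increments vanish on the same geometric sequence is what makes $(1/2)^{\ell-1}$ the unique candidate for the non-$K$ coordinates.
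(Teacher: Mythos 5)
The paper does not contain a proof of Theorem \ref{Thm:2.3} itself; it states ``Since the proof of the reduction is long, we refer the reader to \cite{wu2} for details,'' so a direct comparison with the paper's argument is not possible. Judged on its own terms, your one-parameter reduction and its endgame are computed correctly. The formula $\hat r_1(\boldsymbol{\alpha})=V(A)\min_{1\le k\le K}g_k(\boldsymbol{\alpha})$ checks out for small $K$ and is consistent with the structure of the lemma proved in Section \ref{subsec_5.1}; along the family $\boldsymbol{\alpha}=(1,1/2,\dots,(1/2)^{K-2},t)$ the collapse $g_k=2+t-(1/2)^{K-2}$ for all $k\ge2$ (and the analogous collapse of the inner minimum of $\hat r_2$) is correct, as are the breakpoint at $t=(1/2)^{K-1}$, the intersection value $t^*=(1/2)^{K-2}\frac{V(A)-(h_L-h_R)}{V(A)-(h_L-2h_R)}$, and the identity giving the second supremum formula. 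Your observation that the $\hat r_1$ increments $g_{k+1}-g_k$ and the $\hat r_2$ increments $m_{i+1}-m_i$ are both exactly the family $\{2\alpha_j-\alpha_{j-1}\}_{j=2}^K$ is a genuinely useful structural point.

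The gap is precisely where you admit it is: the ``perturbation argument'' that forces $\alpha_\ell=(1/2)^{\ell-1}$ for all $\ell<K$ is asserted, not proved. This is the entire burden of the reduction --- it is why the paper calls the proof ``long'' --- and as stated it is not clear that it goes through. Because $\hat r_1\wedge\hat r_2$ is a minimum of affine pieces, the set of active pieces can change as you move toward the geometric profile, and you have not shown that the directional derivative of the smallest active piece is nonnegative along the proposed perturbation, nor identified which perturbation direction to use when several indices deviate from the geometric relation simultaneously. A cleaner route, mirroring the proof in Section \ref{subsec_5.1} for $\bar r$, is a verification argument: for the upper bound use $\hat r_1\wedge\hat r_2\le\hat r_1\le\sup_{\boldsymbol{\alpha}}\hat r_1=(2-(1/2)^{K-1})V(A)$ when $V(A)\ge h_L$ (where $\hat r_2(\boldsymbol{\alpha}^*)\ge\hat r_1(\boldsymbol{\alpha}^*)$), and otherwise exhibit a dual feasible configuration in the inner infimum/minimum certifying that no $\boldsymbol{\alpha}\in\Delta$ beats $\boldsymbol{\alpha}^*$; this avoids having to track active sets along a path. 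Finally, a smaller slip: in the first subcase you assert that $\hat r_2$ is nondecreasing on $t\le(1/2)^{K-1}$, but when $h_L<2h_R$ and $V(A)\ge h_L$ it is strictly decreasing there (its slope is $h_L-2h_R<0$). The conclusion survives because on that interval $\hat r_1\le\hat r_2$ (which follows from $V(A)\ge h_L$ together with the monotonicities), so the minimum is the increasing $\hat r_1$, but the stated reason is wrong as written.
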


\begin{remark}
\label{Rmk:2.1}According to Theorems \ref{Thm:2.1}, \ref{Thm:2.2} and
\ref{Thm:2.3}, no matter what set $A$ is considered, the optimal temperatures
$\boldsymbol{\alpha}^{\ast}$ form a geometric sequence with common ratio
$1/2$, except possibly the last and smallest value, which corresponds to the
highest temperature.


\end{remark}

\begin{remark}
\label{Rmk:2.2} By Theorems \ref{Thm:2.1} and \ref{Thm:2.3}, if $A\subset
[0,\infty)$, $h_{L}\leq2h_{R}$ and $V(A)=h_{L}-h_{R}$, the last component of
the optimal temperature $\boldsymbol{\alpha}^{\ast}$ is $0$. Of course the INS
estimator is not well-defined with $\alpha_{K}^{\ast}=0$. In fact,
$\boldsymbol{\alpha}^{\ast}$ is not in $\Delta$, though it is in the closure
of $\Delta$. However, since $\hat{r}_{1}\left(  \boldsymbol{\alpha}\right)
\wedge\hat{r}_{2}\left(  \boldsymbol{\alpha}\right)  $ is a continuous
function of $\boldsymbol{\alpha}$, we can always approach the optimal
performance by using $\boldsymbol{\alpha}$ which is close to
$\boldsymbol{\alpha}^{\ast}$, e.g., $\boldsymbol{\alpha}=( 1,1/2,\ldots
,\left(  1/2\right)  ^{K-2},\delta\left(  1/2\right)  ^{K-2} )$ for some
$\delta\in(0,1)$.
\end{remark}

\begin{remark}
Analogous results hold for a high-dimensional double-well potential
$V:\mathbb{R}^{d}\rightarrow\mathbb{R}$, where $x_{L}$ and $x_{R}$ are the two
local minima (and the former is the unique global minimum) and $0$ is the
unique local maximum. Moreover, one should interpret $(-\infty,0]$ and $[0,\infty)$
as the closure of the domain of attraction of $x_{L}$ and that of $x_{R}$, respectively.
\end{remark}

\begin{remark}
\label{rmk:bench2}
Let $\gamma_{i}>0,i=1,2$ be given. Suppose that the lower
bound on the variance decay rate is within $\gamma_{1}$ of the benchmark, here
$2V(A)$, and that also $\alpha_{K}h_{L}\leq(1/2)^{K-1}h_{L}<\gamma_{2}$.
When this is true, with the simulation time horizon $T^{\varepsilon}%
=e^{\frac{1}{\varepsilon}\gamma_{2}}$ (see Remark \ref{rmk:time_horizon}) we
find that for small $\varepsilon>0$
\[
\text{\textrm{Var}}_{x}\left(  \theta_{\text{\textrm{INS}}}^{\varepsilon
,T^{\varepsilon}}\right)  \leq\frac{1}{T^{\varepsilon}}e^{-\frac
{2}{\varepsilon}(V(A)-\gamma_{1})},
\]
while the quantity being estimated is (approximately) of magnitude
$\mu^{\varepsilon}(A)\approx e^{-\frac{1}{\varepsilon}V(A)}$. Therefore the
ratio of the standard deviation of the estimator (recall that the bias will be
negligible) to the quantity of interest satisfies%
\[
\frac{\text{\textrm{SD}}_{x}\left(  \theta_{\text{\textrm{INS}}}%
^{\varepsilon,T^{\varepsilon}}\right)  }{\mu^{\varepsilon}(A)}\leq
e^{\frac{1}{\varepsilon}(\gamma_{1}-\gamma_2/2)},
\]
with simulation time that scales like $e^{\frac{1}{\varepsilon}\gamma_{2}}$, and bounded relative error requires, in addition to the bound above, $\gamma_2 > 2 \gamma_1$. Although the simulation time grows exponentially in $1/\varepsilon$, the constant gets small very quickly as $K$ increases. Note also that the bound applies for arbitrary sets $A$. For comparison, 
let $\delta_1>0$ and $\delta_2>h_L$.
If we consider standard Monte Carlo with $T^\varepsilon=e^{\frac{1}{\varepsilon}\delta_2}$ we would have a lower bound of the form 
\[
\text{\textrm{Var}}_{x}\left(  \theta_{\text{\textrm{MC}}}^{\varepsilon
,T^{\varepsilon}}\right)  \geq\frac{1}{T^{\varepsilon}}e^{-\frac
{1}{\varepsilon}(V(A)+\delta_{1})},
\]
for small $\varepsilon>0$, and
\[
\frac{\text{\textrm{SD}}_{x}\left(   \theta_{\text{\textrm{MC}}}^{\varepsilon
,T^{\varepsilon}}\right)  }{\mu^{\varepsilon}(A)}\geq
e^{\frac{1}{\varepsilon}(V(A)/2-\delta_1/2 - \delta_2/2)}.
\]
In this case we cannot reduce $T^\varepsilon$ below $e^{\frac{1}{\varepsilon}h_L}$. If $V(A)<h_L$ we can have bounded relative error, but if the set is moved further to the right so that $V(A)>h_L$ then
we must increase the growth rate of $T^\varepsilon$ for bounded relative error.
In all cases, the time required grows exponentially in $1/\varepsilon$ and, unlike the INS case, we cannot make the constant small. 
\end{remark}

\subsection{Sources of variance reduction}

\label{subsec:source_of_variance_reduction} Here we make some remarks on the
form of the optimal $\boldsymbol{\alpha}$ and its interpretation regarding how
variance reduction is achieved by INS. The remarks will also apply to parallel
tempering to some extent if the swap rate is sufficiently high, though in
this case the weights $\rho$ used in INS are then implicitly computed by the
algorithm, giving another sense in which INS is an optimized version of PT.

To begin, we note that the most obvious qualitative change when adding a
higher temperature particle to one or more particles with lower temperature is
that the \textquotedblleft mobility,\textquotedblright\ by which we mean the
ease with which it crosses energy barriers, of the new particle is greater
than that of all other particles. (What this means for INS is that the
particle with the currently highest value of $V$ is essentially given this
temperature, with a slightly modified interpretation when two or more
particles share the highest $V$ value.)

Hence it is tempting to explain the improved sampling of INS, especially with
respect to functionals that correspond to integration with respect to the
lowest temperature, as a consequence of this greater mobility being passed
between higher temperatures and lower temperatures. The mobility is passed via
the swap mechanism with PT, and by the $\rho$ weights with INS. For example,
with PT the argument would be that the sharing of mobility between different
temperatures obtained via swapping makes it easier for the low temperature
particle to overcome potential barriers, and hence the empirical measure will
converge more quickly. While plausible in a qualitative way, it is not clear,
for example, how to relate the claim of faster convergence of the empirical
measure to the properties of the variance. In fact, the situation is more complex.

In order to understand the role played by \textquotedblleft
mobility,\textquotedblright\ in a previous paper \cite{dupwusna} we introduced
and studied what we call INS for IID, which stands for \textit{infinite
swapping} for \textit{independent and identically distributed random variables}. The
setting of that paper considers the integral of a distribution with respect to
some risk-sensitive functional (including as a special case probabilities of
sets with a positive large deviation rate, as is the case of Theorems
\ref{Thm:2.1}, \ref{Thm:2.2} and \ref{Thm:2.3}). Because straightforward Monte
Carlo will not work well, the paper follows the logic of parallel tempering
but within the context of INS. It is assumed the distribution (say
$\mu_{\varepsilon}$) is indexed by a parameter $\varepsilon$ that corresponds
to temperature here, and that a large deviation principle holds for
$\{\mu_{\varepsilon}\}$ with a known rate function. This
measure is then coupled with measures indexed by higher values of the temperature using a
parameter exactly analogous to $\boldsymbol{\alpha}$, and using symmetrization
in the same way as INS one can define an estimator for integrals with respect
to the lowest temperature using $\rho$ weights in the way (suitable for the
static setting) that is exactly analogous to what is done in the present paper
for the Markov setting. Knowledge of the LD rate function is what allows for
the explicit computation of the analogues of the $\rho$ weights. This produced
unbiased estimators analogous to those of the Markov setting, but for this
purely static setting.

A key observation is the following. Since the setting of \cite{dupwusna} does
not involve any dynamics, the notion that any variance reduction is due to
\textquotedblleft increased mobility\textquotedblright\ is not possible.
Indeed, as is discussed in \cite{dupwusna} the $\rho$ weights act in a way
similar to the likelihood ratio in a well designed importance sampling scheme,
helping to cluster the values of the unbiased estimate around the true value,
thereby reducing variance. We argue that the analogous property holds here,
and that the primary role of the higher temperatures (except possibly the
highest temperature) is to provides this variance reduction, and that solving
the variational problems as in Theorems \ref{Thm:2.2} and \ref{Thm:2.3} tells
us how to do this in the low temperature limit. Indeed, we obtain exactly the
same geometric spacing of all temperatures (save the highest) in the low
temperature limit in the Markovian setting as was obtained in the static
setting. An analogous claim could be made regarding PT in the high swap rate
setting, though as noted for PT the computation of the weights is carried out
implicitly via the swaps and averaging in time.

While this motivates the form of the lower temperatures, it leaves out the
highest temperature. Here we find a variety of behaviors that depend on the
particular quantity that is being estimated, and one might argue that it is
here that the mobility of a particle plays a role in determining the value of
$\alpha_{K}$. In all the cases of Theorems \ref{Thm:2.2} and \ref{Thm:2.3}, we
find that the optimal $\alpha_{K}$ is less than or equal to $(1/2)^{K-1}$,
which is the value one finds in the static setting. We conjecture that the
perturbation of $\alpha_{K}$ away from $(1/2)^{K-1}$ in the Markov case
reflects that the optimization here benefits more from greater mobility than
the variance reducing effects of the geometric sequence. There is even one
case, where the optimal value of $\alpha_{K}$ is zero, which one can interpret
as saying we should make the corresponding component as noisy as possible. It
is also worth noting that the overall performance is not particularly
sensitive to $\alpha_{K}$ having the optimal value, in that if we were to
simply use the purely geometric sequence then we still have a decay rate that
is within $(1/2)^{K-1}(V(A)\vee h_L)$ of the optimal $2V(A)$. 

\subsection{Multiple well model}

\label{sec:lower_bound_with_geometric_spacing} The second main result
considers a finite but otherwise arbitrary number of wells. While it is
possible that one could derive results analogous to Theorems \ref{Thm:2.2} and
\ref{Thm:2.3}, which identify the optimizer appearing in the lower bound of
Theorem \ref{Thm:2.1}, we will instead settle for showing that the geometric
spacing suggested by the two well model leads to a variance decay rate that
can be made close to the optimum of $2V(A)$. The parameter $B$ that appears in
the following theorem depends only on $V$, and is identified in Remark
\ref{rem:defofB}. In particular, it does not depend on $\varepsilon
$.
As will be illustrated by examples in Section \ref{sec:6},
$B$ contains interesting information on how the geometry and other properties of the original potential $V$ affect the rate of decay of the variance. 
For example, if the well that corresponds to the global minimum $O_1$ is also the most difficult well to escape from, then the situation of the multiple well model is very similar to that of the two well model.
However, when this is not the case one can have $B>V(A)$,
and $B$ will depend on how the local minima are interconnected. 

For the following theorem, we assume that $V:$ $M\rightarrow\mathbb{R}$ is a smooth
multi-well potential  with a unique global minimum
$y_{1}\in M$ and without loss normalize so that $V$ takes value $0$ at $y_{1}$ (i.e., $V(y_{1})=0$ and
$V(x)>0$ for all $x\in M$). We assume that the gradient of $V$ is Lipschitz continuous,
and also assume that there exists a finite collection of points $\{O_{i}%
\}_{i\in L}\subset M^{K}$ with $L\doteq\{1,2,\ldots,l\}$ for some
$l\in\mathbb{N}$, such that $\cup_{i\in L}\{O_{i}\}$ coincides with the
$\omega$-limit set of the zero noise analogue of \eqref{eqn:dym_INS}, so that $\cup_{i\in L}\{O_{i}\}=\{y_1,\ldots,y_H\}^K$.
This
imposes some additional structure on $V$, and in particular rules out open
regions on which $V$ is a constant.

\begin{theorem}
\label{thm:lower_bound_INS} Assume that the process defined by (\ref{eqn:dym_INS})
satisfies a large deviation principle that is uniform with respect to initial
conditions. 
Then there exists $B<\infty$ such that the following hold.
Consider the choice $\boldsymbol{\alpha}^{\ast}=(1,1/2,\dots
,(1/2)^{K-1})$ and let $T^{\varepsilon}=e^{\frac{1}{\varepsilon}c}$ for some
$c>B\alpha^{\ast}_{K}=B\left(  1/2\right)  ^{K-1}$. Define $\theta_{\mathrm{INS}%
}^{\varepsilon,T^{\varepsilon}}$ by (\ref{eqn:defoftheps}). Then
$\theta_{\mathrm{INS}}^{\varepsilon,T^{\varepsilon}}$ is essentially
unbiased, and
\[
\liminf_{\varepsilon\rightarrow0}-\varepsilon\log\left(  \mathrm{Var}%
_{x}\left(  \theta_{\mathrm{INS}}^{\varepsilon,T^{\varepsilon}}\right)
T^{\varepsilon}\right)  \geq\left(  2-\left(  1/2\right)  ^{K-1}\right)
V\left(  A\right)  -B\left(  1/2\right)  ^{K-1}.
\]

\end{theorem}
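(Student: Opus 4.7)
The plan is to apply the variational characterization of the variance decay rate from \cite{dupwu} to the INS process on the multi-well landscape and then separate the two contributions that together determine the lower bound: a combinatorial \textquotedblleft symmetrization gap\textquotedblright{} term $\hat{r}_1(\boldsymbol{\alpha}^\ast)$ analogous to the one in Theorem \ref{Thm:2.1}, and a Freidlin--Wentzell quasipotential contribution $\hat{r}_{qp}(\boldsymbol{\alpha}^\ast)$ encoding the cost of transitions between products of local minima in $M^K$. Concretely, the assumed uniform LDP for \eqref{eqn:dym_INS}, together with the explicit stationary distribution \eqref{eqn:stat_dis_sym} (which identifies the quasipotential for the symmetrized dynamics directly, bypassing the need for the rate-function form), feeds into the framework of \cite{dupwu} to yield a bound of the form $\liminf_{\varepsilon\to 0}-\varepsilon\log(\mathrm{Var}_x(\theta_{\mathrm{INS}}^{\varepsilon,T^{\varepsilon}})T^{\varepsilon})\geq \hat{r}_1(\boldsymbol{\alpha}^\ast)\wedge\hat{r}_{qp}(\boldsymbol{\alpha}^\ast)$.

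The first step is to evaluate $\hat{r}_1(\boldsymbol{\alpha}^\ast)$ explicitly. With $\alpha_\ell^\ast=(1/2)^{\ell-1}$ strictly decreasing and $v_\ell \doteq V(x_\ell)\geq 0$, the inner minimization over $\sigma\in\Sigma_K$ is achieved by the permutation that pairs the largest $\alpha$ with the smallest $v$; thus $\min_\sigma\sum_\ell \alpha_\ell^\ast V(x_{\sigma(\ell)})=\sum_\ell (1/2)^{\ell-1}v_{(\ell)}$, where $v_{(1)}\leq\cdots\leq v_{(K)}$ is the nondecreasing rearrangement. Taking $x_1\in A$ with $V(x_1)=V(A)$ and $x_\ell=y_1$ (so $v_\ell=0$) for $\ell\geq 2$ gives $\hat{r}_1(\boldsymbol{\alpha}^\ast)\leq (2-(1/2)^{K-1})V(A)$. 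A short case analysis, splitting on whether any additional $v_\ell$ exceeds $V(A)$ or not and using that the marginal cost of increasing any $v_\ell$ is $2(1/2)^{\ell-1}$ minus the corresponding contribution to the sorted sum, shows this value is in fact the infimum, so $\hat{r}_1(\boldsymbol{\alpha}^\ast)=(2-(1/2)^{K-1})V(A)$.

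The second step bounds $\hat{r}_{qp}(\boldsymbol{\alpha}^\ast)$. This is where the constant $B$ enters. Generalizing the role played by $\hat{r}_3$ and $\hat{r}_2$ in Theorem \ref{Thm:2.1}, the quasipotential contribution in the multi-well case is built from Freidlin--Wentzell $W$-graph costs on the product $\omega$-limit set $\{y_1,\dots,y_H\}^K$, weighted by the temperature factors $\alpha_\ell^\ast$. Because the only particle with the maximum temperature is the one indexed by $\alpha_K^\ast=(1/2)^{K-1}$, the asymptotic cost of connecting any configuration of interest to one containing $x_1\in A$ is of the form $2V(A)-(1/2)^{K-1}B$, where $B$ is a constant depending only on $V$ through the minimal spanning-tree barrier cost among the wells $\{y_1,\dots,y_H\}$. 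Section \ref{sec:6} is what makes this identification precise and shows that $B<\infty$ under the standing hypotheses; it also explains how $B$ may or may not exceed $V(A)$ depending on how the global minimum $y_1$ sits in the well graph.

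Combining the two bounds gives
\[
\hat{r}_1(\boldsymbol{\alpha}^\ast)\wedge \hat{r}_{qp}(\boldsymbol{\alpha}^\ast)\;=\;2V(A)-(1/2)^{K-1}\bigl(V(A)\vee B\bigr)\;\geq\; 2V(A)-(1/2)^{K-1}\bigl(V(A)+B\bigr),
\]
which is exactly the inequality in the statement. Essential unbiasedness follows separately: with the top temperature $\varepsilon/\alpha_K^\ast=\varepsilon\,2^{K-1}$, the $K$-th particle equilibrates on a time scale $e^{(1/\varepsilon)\alpha_K^\ast B}=e^{(1/\varepsilon)B(1/2)^{K-1}}$, so the hypothesis $c>B(1/2)^{K-1}$ gives $T^{\varepsilon}$ large enough that $E_x\theta_{\mathrm{INS}}^{\varepsilon,T^{\varepsilon}}-\mu^{\varepsilon}(A)$ decays strictly faster than $\mu^{\varepsilon}(A)$. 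The main obstacle is the quasipotential bound in the third step: unlike the two-well case, where only a single barrier $h_L$ enters, here one must argue that geometric spacing of the $\alpha_\ell^\ast$ is sufficient so that no combination of partial barrier crossings by the lower-temperature particles beats the single full crossing by the highest-temperature one, and that the resulting worst-case $W$-graph cost can be captured by a single constant $B$ independent of $\varepsilon$. This is precisely the content deferred to Section \ref{sec:6}.
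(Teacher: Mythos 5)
Your high-level architecture matches the paper's: invoke the empirical-measure variance framework of \cite{dupwu}, use the explicit stationary distribution \eqref{eqn:stat_dis_sym} to identify the quasipotential without needing the rate-function formula, compute the ``combinatorial'' term $\hat{r}_1(\boldsymbol{\alpha}^\ast)=(2-(1/2)^{K-1})V(A)$ by a rearrangement argument, and absorb the quasipotential contributions into a single constant $B$ scaled by $\alpha_K^\ast$. Your step 2 is essentially the content of the paper's Lemma \ref{lem:5.5} combined with the preceding lemma computing $\sup_{\boldsymbol{\alpha}}r(\boldsymbol{\alpha})$, and that part is sound.

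The gap is in step 3. You assert, by analogy with $\hat{r}_2,\hat{r}_3$ from the two-well Theorem \ref{Thm:2.1}, that the quasipotential contribution equals $\hat{r}_{qp}(\boldsymbol{\alpha}^\ast)=2V(A)-(1/2)^{K-1}B$, and you then use this \emph{exact} form to write the minimum as $2V(A)-(1/2)^{K-1}(V(A)\vee B)$. But this is precisely what the paper deliberately does not try to prove in the multi-well case, and nothing in your argument establishes it. What Theorem \ref{thm:lower_bound} actually furnishes is a minimum over $i\in L$ of the quantities $R_i^{(1)},R_i^{(2)},R_i^{(3)}$, each involving $Q(O_i,\boldsymbol{x})$, $W(O_i)$, $W(O_1\cup O_i)$; there is no reason, in general, that $\min_i(R_i^{(2)}\wedge R_i^{(3)})$ collapses to the closed form $2V(A)-(1/2)^{K-1}B$. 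The paper instead proves, via Lemma \ref{Lem:4.1} (the identity $W(\boldsymbol{x})-W(\boldsymbol{y})=U(\boldsymbol{x})-U(\boldsymbol{y})$) and $U\geq 0$, the weaker but provable bound $\min_i R_i^{(2)}\geq r(\boldsymbol{\alpha})-h\vee w$ and $\min_i R_i^{(3)}\geq r(\boldsymbol{\alpha})-w$ (Lemma \ref{lem:5.5}); these are \emph{subtractive} corrections to the combinatorial term, not an independent quantity to be $\wedge$-ed with $\hat{r}_1$. Substituting $\boldsymbol{\alpha}^\ast$ then gives $2V(A)-(1/2)^{K-1}(V(A)+B)$ directly, which is the form in the statement.

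A second issue concerns your characterization of $B$. Describing it as ``the minimal spanning-tree barrier cost among the wells $\{y_1,\dots,y_H\}$'' misses the crucial factor of $K$ coming from the product state space. The paper's $B=b_1\vee\bigl(K\min_{\hat{g}\in\hat{G}_{\text{m}}(1)}\max_{k\in H_{\hat{g}}}C_{\hat{g}}(k)\bigr)$ requires the argument of Lemma \ref{lem:wbound}: any transition in $M^K$ from $O_i$ to $O_1$ must move each of the $K$ coordinates in turn, and the ``move-the-largest-coordinate-first'' routing shows each move costs at most $\alpha_K\max_k C_{\hat{g}}(k)$. Without this, the claim that $w$ can be uniformly bounded by $\alpha_K$ times a constant (independent of $\varepsilon$ and of which $i$ achieves the min) is not established, and the assertion ``no combination of partial barrier crossings by lower-temperature particles beats the single full crossing'' remains a heuristic rather than a proof. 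Finally, essential unbiasedness also follows from Theorem \ref{thm:lower_bound}'s bias estimate, using $f\geq 0$, $U\geq 0$, and $c>h\vee w$, rather than from the equilibration-time argument you sketch.
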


\section{Proof of Theorem \ref{thm:lower_bound_INS}}

\label{sec:lower_bound}

We first recall notation from Subsection \ref{subsubsec:infinite_swapping} and
introduce additional notation. Given $K\in\mathbb{N},$ for any
$\boldsymbol{\alpha}\in\Delta$ we consider the diffusion process
$\{\boldsymbol{X}^{\varepsilon}(t)\}_{t\geq0}=\{(X_{1}^{\varepsilon}%
(t),\ldots,X_{K}^{\varepsilon}(t))\}_{t\geq0}$ on $M^{K}$ satisfying
\eqref{eqn:dym_INS}, 
and denote $O_{1}%
\doteq(y_{1},\dots,y_{1})$. Figure \ref{fig:b1} illustrates the points
$\cup_{i\in L}\{O_{i}\}$ when $V$ is the Franz potential and $K=2$, with
$O_{1},O_{3},O_{7}$ and $O_{9}$ local minima in the multidimensional potential
defined in (\ref{eqn:U}), $O_{2},O_{4},O_{6}$ and $O_{8}$ saddle points, and
$O_{5}$ a local maximum.


\begin{figure}[h]
    \centering
    {{\includegraphics[width=5cm]{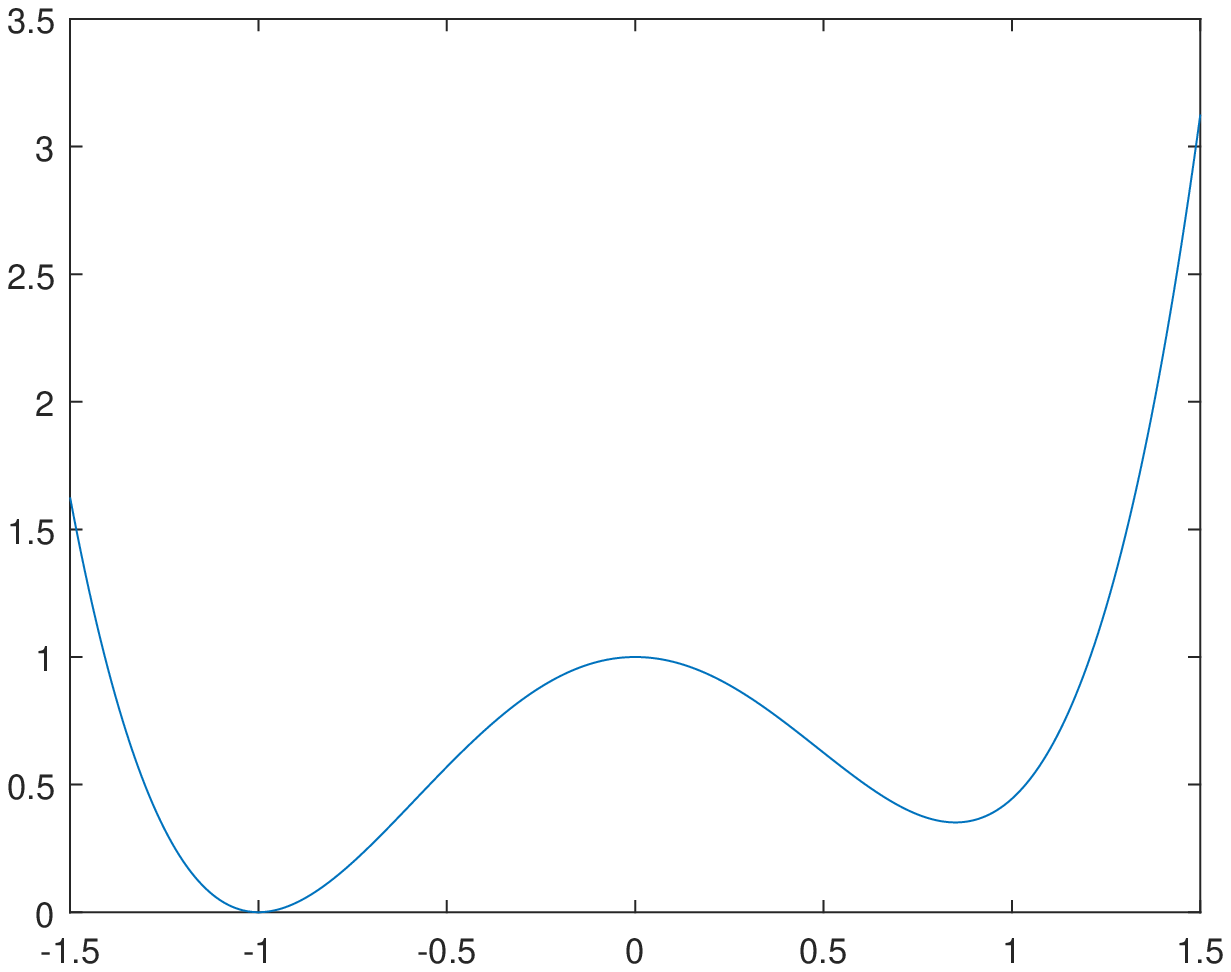} }}%
    \qquad
   {{\includegraphics[width=5cm]{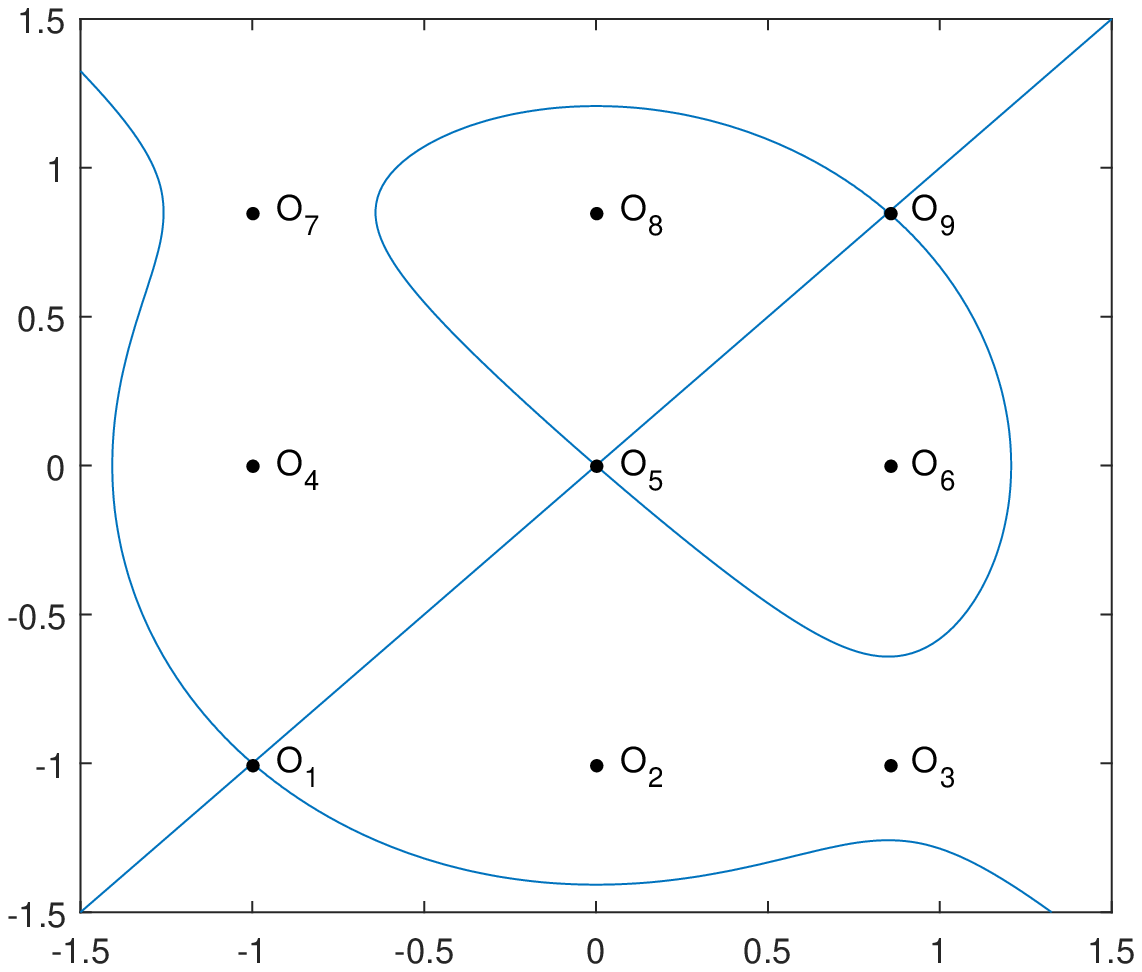} }}%
    \caption{Franz potential $\theta = 0.85$ and equilibrium points of INS $K=2$}%
    \label{fig:b1}%
\end{figure}

To apply the results of \cite{dupwu} we need several quantities that are
constructed in terms of the Freidlin-Wentzell quasipotential. The
quasipotential for \eqref{eqn:dym_INS} is easy to identify because the system
is reversible with $\nu^{\varepsilon}\in\mathcal{P}(M^{K})$ defined by
\eqref{eqn:stat_dis_sym} as its unique stationary distribution. Thus if for
$\boldsymbol{x}\in M^{K}$ we define
\begin{equation}
U(\boldsymbol{x})\doteq \min_{\sigma\in\Sigma_{K}}\left\{  \sum_{\ell=1}^{K}%
\alpha_{\ell}V\left(  x_{\sigma\left(  \ell\right)  }\right)  \right\}  ,
\label{eqn:U}%
\end{equation}
then $U$ corresponds to a potential, and it is easy to see that $U(O_{1})=0$.
Figure \ref{fig:b1} depicts $U$ for the Franz potential.

\begin{figure}[h]
    \centering
    \includegraphics[width=0.9\textwidth]{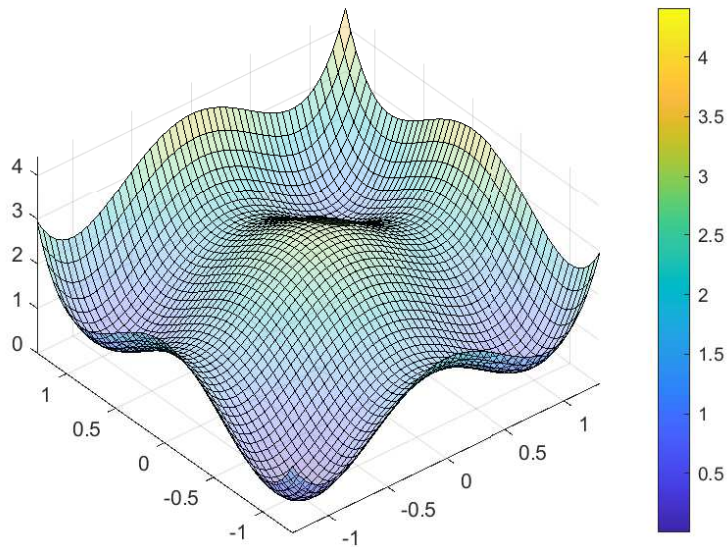}
    \caption{Symmetrized potential for $K=2$}
    \label{fig:b2}
\end{figure}

Since we assume that $\{\boldsymbol{X}^{\varepsilon}(t)\}_{0\leq t\leq T}$ satisfies a large deviation principle on $C([0,T]:M^{K})$ with rate function $I_T:C([0,T]:M^{K})\rightarrow [0,\infty]$ for arbitrary $T\in(0,\infty)$, the
quasipotential $Q(\boldsymbol{x},\boldsymbol{y})$ is defined for all
$\boldsymbol{x},\boldsymbol{y}\in M^{K}$ by
\[
    Q(\boldsymbol{x},\boldsymbol{y})\doteq\inf\left\{  I_{T}(\phi):\phi(0)=\boldsymbol{x},\phi(T)=\boldsymbol{y},T<\infty\right\}  .
\]
(in fact the specific form of the
quasipotential is already known since we know the rate function for the
stationary distributions $\{\nu^{\varepsilon}\}$). 

\vspace{\baselineskip}
Next we give a definition from graph theory which will be used in the proofs of the main results.

\begin{definition}
\label{Def:3.3}Given a subset $W\subset L=\{1,\ldots,l\},$ a directed graph
consisting of arrows $i\rightarrow j$ $(i\in L\setminus W,j\in L,i\neq j)$ is
called a $W$\textbf{-graph on }$L$ if it satisfies the following conditions.

\begin{enumerate}
\item Every point $i$ $\in L\setminus W$ is the initial point of exactly one arrow.



\item For any point $i$ $\in L\setminus W,$ there exists a sequence of
arrows leading from $i$ to some point in $W.$
\end{enumerate}
\end{definition}

We note that we could replace the second condition by the requirement that there are no closed cycles in the graph.
We denote by $G(W)$ the set of $W$-graphs; we shall use the letter $g$ to denote graphs. 

\begin{remark}
\label{Rmk:3.1}We use $G(i)$ to denote $G(\{i\})$ and  $G(i,j)$ to denote $G(\{i,j\}).$
\end{remark}

\begin{definition}
    For all $j\in L$, define
    \begin{equation}
        W\left(  O_{j}\right)  \doteq\min_{g\in G\left(  j\right)  }\left[{\textstyle\sum_{\left(  m\rightarrow n\right)  \in g}}V\left(  O_{m},O_{n}\right)  \right],   \label{eqn:Wtwarg}
    \end{equation}
    \begin{equation}
        W\left( O_1\cup O_{j}\right)  \doteq\min_{g\in G\left(  1,j\right)  }\left[{\textstyle\sum_{\left(  m\rightarrow n\right)  \in g}}V\left(  O_{m},O_{n}\right)  \right]  ,
        \label{eqn:Wtwarg_2}
    \end{equation}
and 
    \begin{equation}
        W(\boldsymbol{x})\doteq\min_{i\in L}\left[  W(O_{i})+Q(O_{i},\boldsymbol{x})\right]  . \label{eqn:W_x}%
    \end{equation}
\end{definition}
\begin{remark}
\label{rmk:roleofW}
Heuristically, if we interpret
$V\left(  O_{m},O_{n}\right)  $ as the \textquotedblleft
cost\textquotedblright\ of moving from $O_{m}$ to $O_{n},$ then $W\left(O_{j}\right)  $ is the ``least total cost''  
of reaching $O_{j}$ from every $O_{i}$ with $i\in L\setminus\{j\}.$
\end{remark}

Before proceeding to the next subsection, we state and prove a lemma that ties
up the relation between $W$ and $U$. The relation will also be used later on
for solving the optimization problem

\begin{lemma}
\label{Lem:4.1} For any $\boldsymbol{x},\boldsymbol{y}\in M^{K},$ $W\left(
\boldsymbol{x}\right)  -W\left(  \boldsymbol{y}\right)  =U\left(
\boldsymbol{x}\right)  -U\left(  \boldsymbol{y}\right)  .$
\end{lemma}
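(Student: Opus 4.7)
The plan is to show that $W(\boldsymbol{x})-U(\boldsymbol{x})$ is a constant independent of $\boldsymbol{x}\in M^{K}$; subtracting two instances then yields the lemma. The comparison is obtained by evaluating the same Freidlin--Wentzell logarithmic asymptotic of the stationary mass $\nu^{\varepsilon}(B_{\delta}(\boldsymbol{x}))$ of a small ball around $\boldsymbol{x}$ in two different ways, first from the explicit formula \eqref{eqn:stat_dis_sym} for $\nu^{\varepsilon}$, and then from the classical characterization of the small-noise invariant measure of a multi-well diffusion in terms of the graph-theoretic $W$.

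On the explicit side, since $V\geq 0$ with $V(y_{1})=0$, the exponent $\sum_{\ell}\alpha_{\ell}V(x_{\ell})$ appearing in \eqref{eqn:stat_dis_sym} is non-negative and attains its minimum value $0$ at $\boldsymbol{x}=O_{1}$, so Laplace's method gives $\varepsilon\log Z_{\nu}^{\varepsilon}\to 0$. For fixed $\boldsymbol{x}$ and small $\delta>0$, the sum over $\sigma\in\Sigma_{K}$ in the density of $\nu^{\varepsilon}$ is, to leading logarithmic order, dominated by the permutation realizing the minimum in \eqref{eqn:U}, and a standard Laplace estimate yields
$$
\lim_{\delta\to 0}\lim_{\varepsilon\to 0}\bigl[-\varepsilon\log \nu^{\varepsilon}(B_{\delta}(\boldsymbol{x}))\bigr]=U(\boldsymbol{x}).
$$
On the Freidlin--Wentzell side, \cite[Chapter 6]{frewen2} identifies the same limit with $W(\boldsymbol{x})-\min_{j\in L}W(O_{j})$, where $W$ is precisely the quasipotential-of-the-full-system built from the $W(O_{j})$ in \eqref{eqn:Wtwarg} and the point-to-point cost $Q$ as in \eqref{eqn:W_x}. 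Equating the two expressions gives $W(\boldsymbol{x})=U(\boldsymbol{x})+c_{0}$ with $c_{0}\doteq\min_{j}W(O_{j})$ independent of $\boldsymbol{x}$, from which the lemma follows by subtraction.

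The main obstacle is the Freidlin--Wentzell side: the sharpest classical statement is formulated in small neighborhoods of each $O_{i}$, and one must upgrade it to the density-level asymptotic at an arbitrary $\boldsymbol{x}\in M^{K}$. This is standard given the uniform LDP assumed in Theorem \ref{thm:lower_bound_INS}, the Lipschitz continuity of $\nabla V$, and the $\omega$-limit hypothesis: the upper bound follows by concatenating an optimal path from $O_{i}$ (weighted by $W(O_{i})$) with a path from $O_{i}$ to $\boldsymbol{x}$ and optimizing over $i$, reproducing \eqref{eqn:W_x}, while the matching lower bound uses the graph-theoretic representation of the stationary distribution. Importantly, no use is made of the explicit form of the rate function $I_{T}$; only the quasipotential $Q$ is needed, and as recorded in Section \ref{sec:statement_of_the_main_results} this is already determined by $\nu^{\varepsilon}$ itself.
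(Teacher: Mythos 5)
Your proposal is correct and follows essentially the same route as the paper: both evaluate the logarithmic asymptotics of $\nu^{\varepsilon}(B_{\delta}(\boldsymbol{x}))$ once via the Freidlin--Wentzell Theorem 4.3 (Chapter 6) giving $W$, and once via Laplace applied to the explicit density giving $U$. The only cosmetic difference is that you compute absolute asymptotics (which requires the extra, but easy, observation that $\varepsilon\log Z_{\nu}^{\varepsilon}\to 0$), whereas the paper works with ratios $\nu^{\varepsilon}(B_{\delta}(\boldsymbol{x}))/\nu^{\varepsilon}(B_{\delta}(\boldsymbol{y}))$ so that the normalization and the $\min_{j}W(O_{j})$ shift cancel automatically.
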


\begin{proof}
Since we know that the stationary distribution $\nu^{\varepsilon}$ of
$\{\boldsymbol{X}^{\varepsilon}(t)\}_{t\geq0}$ is given by
\eqref{eqn:stat_dis_sym}, we can apply \cite[Theorem 4.3, Chapter 6]{frewen2}
to find that for any $\eta>0$ and for sufficiently small neighborhoods of
$\boldsymbol{x}$ and $\boldsymbol{y}$,%
\[
\frac{\nu^{\varepsilon}\left(  B_{\delta}\left(  \boldsymbol{x}\right)
\right)  }{\nu^{\varepsilon}\left(  B_{\delta}\left(  \boldsymbol{y}\right)
\right)  }\leq\frac{\exp\left\{  -\frac{1}{\varepsilon}\left(  W\left(
\boldsymbol{x}\right)  -\min_{i\in L}W(O_{i})-\eta\right)  \right\}  }%
{\exp\left\{  -\frac{1}{\varepsilon}\left(  W\left(  \boldsymbol{y}\right)
-\min_{i\in L}W(O_{i})+\eta\right)  \right\}  }=e^{-\frac{1}{\varepsilon
}\left(  W\left(  \boldsymbol{x}\right)  -W\left(  \boldsymbol{y}\right)
-2\eta\right)  }%
\]
and
\[
\frac{\nu^{\varepsilon}\left(  B_{\delta}\left(  \boldsymbol{x}\right)
\right)  }{\nu^{\varepsilon}\left(  B_{\delta}\left(  \boldsymbol{y}\right)
\right)  }\geq\frac{\exp\left\{  -\frac{1}{\varepsilon}\left(  W\left(
\boldsymbol{x}\right)  -\min_{i\in L}W(O_{i})+\eta\right)  \right\}  }%
{\exp\left\{  -\frac{1}{\varepsilon}\left(  W\left(  \boldsymbol{y}\right)
-\min_{i\in L}W(O_{i})-\eta\right)  \right\}  }=e^{-\frac{1}{\varepsilon
}\left(  W\left(  \boldsymbol{x}\right)  -W\left(  \boldsymbol{y}\right)
+2\eta\right)  }.
\]
Thus
\[
\limsup_{\varepsilon\rightarrow0}-\varepsilon\log\left(  \frac{\nu
^{\varepsilon}\left(  B_{\delta}\left(  \boldsymbol{x}\right)  \right)  }%
{\nu^{\varepsilon}\left(  B_{\delta}\left(  \boldsymbol{y}\right)  \right)
}\right)  \leq W\left(  \boldsymbol{x}\right)  -W\left(  \boldsymbol{y}%
\right)  +2\eta
\]
and
\[
\liminf_{\varepsilon\rightarrow0}-\varepsilon\log\left(  \frac{\nu
^{\varepsilon}\left(  B_{\delta}\left(  \boldsymbol{x}\right)  \right)  }%
{\nu^{\varepsilon}\left(  B_{\delta}\left(  \boldsymbol{y}\right)  \right)
}\right)  \geq W\left(  \boldsymbol{x}\right)  -W\left(  \boldsymbol{y}%
\right)  -2\eta.
\]
On the other hand, for $\boldsymbol{w}=\boldsymbol{x},\boldsymbol{y}$ the
definition of $U$ implies%
\begin{align*}
\int_{B_{\delta}\left(  \boldsymbol{w}\right)  }\exp\left\{  -\frac
{1}{\varepsilon}U\left(  \boldsymbol{z}\right)  \right\}  d\boldsymbol{z}  &
\leq\int_{B_{\delta}\left(  \boldsymbol{x}\right)  }\left[  \sum_{\sigma
\in\Sigma_{K}}\exp\left\{  -\frac{1}{\varepsilon}\sum_{\ell=1}^{K}\alpha
_{\ell}V\left(  z_{\sigma\left(  \ell\right)  }\right)  \right\}  \right]
d\boldsymbol{z}\\
&  \leq K!\cdot\int_{B_{\delta}\left(  \boldsymbol{w}\right)  }\exp\left\{
-\frac{1}{\varepsilon}U\left(  \boldsymbol{z}\right)  \right\}
d\boldsymbol{z}.
\end{align*}
Therefore
\begin{align*}
&  \lim_{\varepsilon\rightarrow0}-\varepsilon\log\left(  \frac{\nu
^{\varepsilon}\left(  B_{\delta}\left(  \boldsymbol{x}\right)  \right)  }%
{\nu^{\varepsilon}\left(  B_{\delta}\left(  \boldsymbol{y}\right)  \right)
}\right) \\
&  \quad=\lim_{\varepsilon\rightarrow0}-\varepsilon\log\left(  \frac
{\int_{B_{\delta}\left(  \boldsymbol{x}\right)  }\left[  \sum_{\sigma\in
\Sigma_{K}}\exp\left\{  -\frac{1}{\varepsilon}\sum_{\ell=1}^{K}\alpha_{\ell
}V\left(  z_{\sigma\left(  \ell\right)  }\right)  \right\}  \right]
d\boldsymbol{z}}{\int_{B_{\delta}\left(  \boldsymbol{y}\right)  }\left[
\sum_{\sigma\in\Sigma_{K}}\exp\left\{  -\frac{1}{\varepsilon}\sum_{\ell=1}%
^{K}\alpha_{\ell}V\left(  z_{\sigma\left(  \ell\right)  }\right)  \right\}
\right]  d\boldsymbol{z}}\right) \\
&  \quad=\lim_{\varepsilon\rightarrow0}-\varepsilon\log\left(  \frac
{\int_{B_{\delta}\left(  \boldsymbol{x}\right)  }\exp\left\{  -\frac
{1}{\varepsilon}U\left(  \boldsymbol{z}\right)  \right\}  d\boldsymbol{z}%
}{\int_{B_{\delta}\left(  \boldsymbol{y}\right)  }\exp\left\{  -\frac
{1}{\varepsilon}U\left(  \boldsymbol{z}\right)  \right\}  d\boldsymbol{z}%
}\right) \\
&  \quad=\min_{\boldsymbol{u}\in B_{\delta}\left(  \boldsymbol{x}\right)
}U\left(  \boldsymbol{u}\right)  -\min_{\boldsymbol{u}\in B_{\delta}\left(
\boldsymbol{y}\right)  }U\left(  \boldsymbol{u}\right)  ,
\end{align*}
where the last equality is from Laplace's principle. Hence $\min
_{\boldsymbol{u}\in B_{\delta}\left(  \boldsymbol{x}\right)  }U\left(
\boldsymbol{u}\right)  -\min_{\boldsymbol{u}\in B_{\delta}\left(
\boldsymbol{y}\right)  }U\left(  \boldsymbol{u}\right)  $ is between $W\left(
\boldsymbol{x}\right)  -W\left(  \boldsymbol{y}\right)  \pm2\eta.$ Sending
$\eta\rightarrow0$ (and thus $\delta\rightarrow0$), we find $W\left(
\boldsymbol{x}\right)  -W\left(  \boldsymbol{y}\right)  =U\left(
\boldsymbol{x}\right)  -U\left(  \boldsymbol{y}\right)  .$
\end{proof}

\begin{remark}
\label{rmk:4.1} By \eqref{eqn:W_x} and Lemma \ref{Lem:4.1},
\[
U(\boldsymbol{x})=\min_{i\in L}\left[  U(O_{i})+Q(O_{i},\boldsymbol{x}%
)\right]  .
\]

\end{remark}

We can now state the main result of \cite{dupwu}. The result stated in
\cite{dupwu} assumes a fixed function $f$, but the result as stated below
follows from this and the uniform convergence $f_{\varepsilon}\rightarrow f$.
The uniformity with respect to the initial condition is discussed on \cite[page 12]{dupwu}.
Let
\begin{equation}
h\doteq\min_{i\in L\setminus\{1\}}Q(O_{1},O_{i})\text{ and }w\doteq
W(O_{1})-\min_{i\in L\setminus\{1\}}W(O_{1}\cup O_{i}).
 \label{eqn:somedefs}
\end{equation}
The quantity $h$ is related to the time that it takes for the process to leave a neighborhood of $O_1$, and $W(O_1)-W(O_1\cup O_i)$ is related to the transition time from a neighborhood of $O_i$ to one of $O_1$. The roles of $h$ and $w$ will be further explained in Section \ref{sec:6}.  

\begin{theorem}
\label{thm:lower_bound}
Assume that the process defined by (\ref{eqn:dym_INS}) satisfies a large deviation principle that is uniform with respect to initial conditions, and let $\nu^{\varepsilon}$ be its unique stationary distribution and let $T^{\varepsilon}=e^{\frac{1}{\varepsilon}c}$ for some $c>h\vee w$.
Suppose that for each $\varepsilon>0$ $f_{\varepsilon}:M^{K}\rightarrow
\mathbb{R}$, and that for a continuous function $f:M^{K}\rightarrow\mathbb{R}$
we have $f_{\varepsilon}\rightarrow f$ uniformly on $M^{K}$. Then for any compact set $A\subset M^{K}$ and $\boldsymbol{x} \in M^{K}$, 
\begin{align*}
&  \liminf_{\varepsilon\rightarrow0}-\varepsilon\log\left\vert E_{\boldsymbol{x}}\left(  \frac{1}{T^{\varepsilon}}\int_{0}^{T^{\varepsilon}%
}e^{-\frac{1}{\varepsilon}f_{\varepsilon}\left(  X_{t}^{\varepsilon}\right)  }1_{A}\left(
X_{t}^{\varepsilon}\right)  dt\right)  -\int_{M^K} e^{-\frac{1}{\varepsilon}f_{\varepsilon}\left(
\boldsymbol{x}\right)  }1_{A}\left(  \boldsymbol{x}\right)  \nu^{\varepsilon}\left(  d\boldsymbol{x}\right)
\right\vert \\
&  \qquad\geq\inf_{\boldsymbol{x}\in A}\left[  f\left(  \boldsymbol{x}\right)  +W\left(  \boldsymbol{x}\right)
\right]  -W\left(  O_{1}\right)  +c-(h\vee w),
\end{align*}
and 
\begin{align*}
&  \liminf_{\varepsilon\rightarrow0}-\varepsilon\log\left(  T^{\varepsilon
}\cdot\text{$\mathrm{Var}_{\boldsymbol{x}}\left(  \frac{1}{T^{\varepsilon}%
}\int_{0}^{T^{\varepsilon}}e^{-\frac{1}{\varepsilon}f_{\varepsilon}%
(X_{t}^{\varepsilon})}1_{A}(X_{t}^{\varepsilon})dt\right)  $}\right) \\
&  \quad\geq%
\begin{cases}
\min_{i\in L}\left(  R_{i}^{(1)}\wedge R_{i}^{(2)}\right)  , & \text{if
}h\geq w\\
\min_{i\in L}\left(  R_{i}^{(1)}\wedge R_{i}^{(2)}\wedge R_{i}^{(3)}\right), & \text{otherwise }%
\end{cases}
,
\end{align*}
where
\[
R_{i}^{(1)}\doteq\inf_{\boldsymbol{x}\in A}[2f(\boldsymbol{x})+Q(O_{i}%
,\boldsymbol{x})]+W(O_{i})-W(O_{1}),
\]%
\[
R_{1}^{(2)}\doteq2\inf_{\boldsymbol{x}\in A}[f(\boldsymbol{x})+Q(O_{1}%
,\boldsymbol{x})]-h,
\]
for $i\in L\setminus\{1\}$
\[
R_{i}^{(2)}\doteq2\inf_{\boldsymbol{x}\in A}[f(\boldsymbol{x})+Q(O_{i}%
,\boldsymbol{x})]+W(O_{i})-2W(O_{1})+W(O_{1}\cup O_{i}),
\]
and for $i \in L$
\[
R_{i}^{(3)}\doteq2\inf_{\boldsymbol{x}\in A}[f(\boldsymbol{x})+Q(O_{i}%
,\boldsymbol{x})]+2W(O_{i})-2W(O_{1})-w.
\]

\end{theorem}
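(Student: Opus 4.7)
The plan is to prove the bias and the variance bounds separately, combining the reversibility of $\{\boldsymbol{X}^\varepsilon_t\}$ with respect to $\nu^\varepsilon$ with Freidlin--Wentzell large-deviation estimates for hitting and exit times relative to the equilibrium points $\{O_i\}_{i\in L}$. Throughout, abbreviate $g_\varepsilon(\boldsymbol{x}) \doteq e^{-f_\varepsilon(\boldsymbol{x})/\varepsilon} 1_A(\boldsymbol{x})$ and let $L^\varepsilon$ denote the generator of the process.

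For the bias bound, I would decompose
\[
E_{\boldsymbol{x}}\!\left[\frac{1}{T^\varepsilon}\int_0^{T^\varepsilon}\! g_\varepsilon(\boldsymbol{X}^\varepsilon_t)\,dt\right] - \nu^\varepsilon(g_\varepsilon) = \frac{1}{T^\varepsilon}\int_0^{T^\varepsilon}\bigl(P^\varepsilon_t g_\varepsilon(\boldsymbol{x}) - \nu^\varepsilon(g_\varepsilon)\bigr)\,dt
\]
and combine the Laplace estimate $-\varepsilon\log \nu^\varepsilon(g_\varepsilon) \to \inf_{\boldsymbol{x}\in A}[f(\boldsymbol{x})+W(\boldsymbol{x})] - W(O_1)$, which follows from \eqref{eqn:stat_dis_sym} and Lemma \ref{Lem:4.1}, with a Freidlin--Wentzell mixing estimate $\|P^\varepsilon_t g_\varepsilon - \nu^\varepsilon(g_\varepsilon)\|_\infty \le \|g_\varepsilon\|_\infty e^{-\lambda^\varepsilon t}$ whose logarithmic rate satisfies $-\varepsilon\log\lambda^\varepsilon \to h\vee w$ (the two exponents govern, respectively, the time to visit a neighborhood of $O_1$ from any other $O_i$ and the time for the skeleton chain on $\{O_i\}$ to equilibrate). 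Since $T^\varepsilon = e^{c/\varepsilon}$ with $c > h\vee w$, the prefactor $1/T^\varepsilon$ discounts the ergodicity residual by $c-(h\vee w)$, producing the claimed rate.

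For the variance, the identity
\[
T^\varepsilon \mathrm{Var}_{\boldsymbol{x}}\!\left(\frac{1}{T^\varepsilon}\int_0^{T^\varepsilon}\! g_\varepsilon(\boldsymbol{X}^\varepsilon_t)\,dt\right) = \frac{2}{T^\varepsilon}\int_0^{T^\varepsilon}(T^\varepsilon - s)\,\mathrm{Cov}_{\boldsymbol{x}}\bigl(g_\varepsilon(\boldsymbol{X}^\varepsilon_0), g_\varepsilon(\boldsymbol{X}^\varepsilon_s)\bigr)\,ds + O(1)
\]
reduces the problem to controlling the asymptotic variance $\sigma^2_\varepsilon \doteq 2\langle g_\varepsilon - \nu^\varepsilon(g_\varepsilon),\,(-L^\varepsilon)^{-1}(g_\varepsilon - \nu^\varepsilon(g_\varepsilon))\rangle_{\nu^\varepsilon}$. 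Solving the associated Poisson equation by a regenerative representation anchored at a neighborhood of $O_1$ expresses $\sigma^2_\varepsilon$ as a sum over pairs of visits to $A$, and each $R^{(k)}_i$ corresponds to a distinct dominant mechanism. Specifically, $R^{(1)}_i$ captures a single visit to $A$ along the cheapest route from $O_i$, contributing $\inf_{\boldsymbol{x}\in A}[2f(\boldsymbol{x})+Q(O_i,\boldsymbol{x})]$ plus the invariant-measure weight $W(O_i)-W(O_1)$; $R^{(2)}_i$ captures two visits to $A$ via $O_i$, yielding $2\inf_A[f+Q(O_i,\cdot)]$ together with the extra graph cost $W(O_1\cup O_i)$ needed to reconnect $O_i$ to $O_1$ (or the $-h$ discount from the residence time at $O_1$ when $i=1$); and $R^{(3)}_i$, present only when $h<w$, encodes an additional long-time correlation through the mixing bottleneck, with discount $-w$.

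The main obstacle is producing matching sharp asymptotics at the $-\varepsilon\log$ level for the Green's kernel $(-L^\varepsilon)^{-1}(\boldsymbol{x},\boldsymbol{y})$ simultaneously in both arguments, since simpler ergodic arguments (such as using only spectral gap bounds) do not capture the distinct contributions of the individual $O_i$. One must systematically aggregate Freidlin--Wentzell contributions from all $W$-graphs $g\in G(i)$ and $g\in G(1,i)$ through a cycle-decomposition of the skeleton chain on $\{O_i\}$. Lemma \ref{Lem:4.1} is essential here: by identifying the graph-theoretic potential $W$ with the stationary potential $U$ up to an additive constant, it guarantees that the $\nu^\varepsilon$-weights that appear in $\sigma^2_\varepsilon$ coincide exactly with the graph weights governing hitting-probability estimates. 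Once the contributions from each graph family have been identified and combined with the bias estimate, taking the minimum over $i\in L$ produces the three-way infimum asserted in the theorem.
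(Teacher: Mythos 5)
You should first be aware that the paper does not prove this theorem: it is imported from \cite{dupwu} (``We can now state the main result of \cite{dupwu}''), and the only argument supplied in the present paper is the remark that the fixed-$f$ version extends to $f_{\varepsilon}\rightarrow f$ uniformly. So you were in effect asked to reconstruct the main technical result of that companion paper. Your outline names the right ingredients --- reversibility, the Laplace asymptotics of $\nu^{\varepsilon}$ tied to $W$ via Lemma \ref{Lem:4.1}, Freidlin--Wentzell $W$-graph estimates, and a Green's-function/covariance representation of the variance --- but the steps that would actually produce the stated exponents are either missing or would fail.

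Two concrete problems. For the bias, the sup-norm mixing bound $\Vert P^{\varepsilon}_{t}g_{\varepsilon}-\nu^{\varepsilon}(g_{\varepsilon})\Vert_{\infty}\leq\Vert g_{\varepsilon}\Vert_{\infty}e^{-\lambda^{\varepsilon}t}$ can at best give a decay rate $\inf_{\boldsymbol{x}\in A}f(\boldsymbol{x})+c-(h\vee w)$, since $\Vert g_{\varepsilon}\Vert_{\infty}\approx e^{-\inf_{A}f/\varepsilon}$; it loses the factor $W(\boldsymbol{x})-W(O_{1})=U(\boldsymbol{x})\geq0$ that quantifies how rarely the process visits $A$, and the claimed rate $\inf_{A}[f+W]-W(O_{1})+c-(h\vee w)$ is strictly larger whenever $U>0$ on $A$, which is the case of interest. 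You would need a weighted, initial-condition-uniform estimate of $P^{\varepsilon}_{t}g_{\varepsilon}$ itself; moreover your identification $-\varepsilon\log\lambda^{\varepsilon}\rightarrow h\vee w$ is asserted rather than proved, and the standard spectral-gap exponent is governed by $w$ alone, with $h$ entering through the frequency of excursions out of the well of $O_{1}$ rather than through mixing. For the variance, the displayed identity uses $\mathrm{Cov}_{\boldsymbol{x}}(g_{\varepsilon}(\boldsymbol{X}^{\varepsilon}_{0}),g_{\varepsilon}(\boldsymbol{X}^{\varepsilon}_{s}))$, which vanishes identically for a deterministic start (the correct object is the double time integral of $\mathrm{Cov}_{\boldsymbol{x}}(g_{\varepsilon}(\boldsymbol{X}^{\varepsilon}_{s}),g_{\varepsilon}(\boldsymbol{X}^{\varepsilon}_{t}))$), and the ``$+O(1)$'' used to pass to the stationary asymptotic variance is fatal: every quantity in the theorem is exponentially small in $1/\varepsilon$, so any additive error with the wrong exponent swamps the bound. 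Finally, the heart of the result --- the specific forms of $R^{(1)}_{i}$, $R^{(2)}_{i}$, $R^{(3)}_{i}$ and the dichotomy between $h\geq w$ and $h<w$ --- is only interpreted, not derived; the sentence about systematically aggregating contributions from all $W$-graphs describes the work that remains rather than performing it. As it stands the proposal is a plausible roadmap, not a proof.
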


To apply this theorem to the INS model, we note that the definition of
$\theta_{\text{INS}}^{\varepsilon,T^{\varepsilon}}$ involves the sum of a
finite number of integrals of the form
\[
\frac{1}{T^{\varepsilon}}\int_{0}^{T^{\varepsilon}}w^{\varepsilon
}(\boldsymbol{X}_{\sigma}^{\varepsilon}(t),\boldsymbol{\alpha})1_{A}(X_{\sigma
(1)}^{\varepsilon}(t))dt,
\]
where $\sigma$ is a permutation which for simplicity we take here to be the
identity, and $w^{\varepsilon}(\boldsymbol{x},\boldsymbol{\alpha})$ is defined
in (\ref{eqn:defofw}). Since $V$ is bounded and continuous, it follows from
standard features of the mollification used in the definition of
$w^{\varepsilon}$ in (\ref{eqn:defofw}), that if we write $w^{\varepsilon
}(\boldsymbol{x},\boldsymbol{\alpha})$ in the form
\[
e^{-\frac{1}{\varepsilon}\sum_{\ell=1}^{K}\alpha_{\ell}V(x_{\ell})+\frac
{1}{\varepsilon}g_{\varepsilon}(\boldsymbol{x},\boldsymbol{\alpha})},
\]
then as $\varepsilon\rightarrow0$%
\begin{equation}
g_{\varepsilon}(\boldsymbol{x},\boldsymbol{\alpha})\rightarrow U(\boldsymbol{x})\doteq \min_{\sigma
\in\Sigma_{K}}\left[  \sum_{\ell=1}^{K}\alpha_{\ell}V(x_{\sigma(\ell)})\right]
 \label{eqn:convoff}%
\end{equation}
uniformly in $\boldsymbol{x}\in M^{K}$ (see, e.g., \cite[Lemma 14.7]%
{buddup4}). Define
\[
f(\boldsymbol{x},\boldsymbol{\alpha})
=\sum_{\ell=1}^{K}\alpha_{\ell}V(x_{\ell})-U(\boldsymbol{x}).
\]
We can then apply Theorem \ref{thm:lower_bound} with the function
$f_{\varepsilon}(\boldsymbol{x},\boldsymbol{\alpha})=\sum_{\ell=1}^{K}%
\alpha_{\ell}V(x_{\ell})-g_{\varepsilon}(\boldsymbol{x},\boldsymbol{\alpha})$
and the compact set $A\times M^{K-1}\subset M^{K}$, to find that
\begin{align*}
&  \liminf_{\varepsilon\rightarrow0}-\varepsilon\log\left\vert E_{\boldsymbol{x}}\left(  \theta_{\text{INS}}^{\varepsilon,T^{\varepsilon}}\right)  - \nu^{\varepsilon}(A)\right\vert \\
&  \qquad\geq\inf_{\boldsymbol{x}\in A\times M^{K-1}}\left[  f\left(  \boldsymbol{x},\boldsymbol{\alpha}\right)  +W\left(  \boldsymbol{x}\right)
\right]  -W\left(  O_{1}\right)  +c-(h\vee w)\\
& \qquad= \inf_{\boldsymbol{x}\in A\times M^{K-1}}\left[  f\left(  \boldsymbol{x},\boldsymbol{\alpha}\right)  +U\left(  \boldsymbol{x}\right)
\right] +c-(h\vee w).
\end{align*}
Since $f\geq 0$, $U\geq 0$ and $c>h\vee w$, this shows that $\theta_{\text{INS}}^{\varepsilon,T^{\varepsilon}}$ is essentially unbiased. Moreover, we find that
$\liminf_{\varepsilon\rightarrow0}-\varepsilon\log(T^{\varepsilon}%
\cdot\text{$\mathrm{Var}_{x}(\theta_{\text{INS}}^{\varepsilon,T^{\varepsilon}%
})$})$ is bounded below by either $\min_{i\in L}(R_{i}^{(1)}%
(\boldsymbol{\alpha})\wedge R_{i}^{(2)}(\boldsymbol{\alpha}))$ or $\min_{i\in
L}(R_{i}^{(1)}(\boldsymbol{\alpha})\wedge R_{i}^{(2)}(\boldsymbol{\alpha
})\wedge R_{i}^{(3)}(\boldsymbol{\alpha}))$, depending on whether $h\geq
w$ or $w>h$. 

In the next subsection, we will identify appropriate lower bounds for these two minima and then optimize the lower bounds over $\boldsymbol{\alpha}$.

\begin{remark}
    As mentioned in Remark \ref{rmk:quants}, we are also interested in estimating risk sensitive functionals of the form 
    \[
        \int_{\mathbb{R}^{d}}e^{-\frac{1}{\varepsilon}F\left(  x\right)  }\mu^{\varepsilon}\left(  dx\right).  
    \]
    We can apply Theorem \ref{thm:lower_bound} to the associated INS estimator in this case as well by using the function 
    $f_{\varepsilon}(\boldsymbol{x},\boldsymbol{\alpha})=F(x_1)+\sum_{\ell=1}^{K}\alpha_{\ell}V(x_{\ell})-g_{\varepsilon}(\boldsymbol{x},\boldsymbol{\alpha})$ and the compact set $M^{K}$. 
    Moreover, one can modify the arguments in Subsection \ref{subsec_5.1} to derive an analogous version of Theorem \ref{thm:lower_bound_INS} for the risk sensitive functional case.
\end{remark}

\subsection{Bounds for the optimization problem}
\label{subsec_5.1}
In this subsection we provide suitable lower bounds for $\min_{i\in L}%
(R_{i}^{(1)}(\boldsymbol{\alpha})\wedge R_{i}^{(2)}(\boldsymbol{\alpha}))$ and
$\min_{i\in L}(R_{i}^{(1)}(\boldsymbol{\alpha})\wedge R_{i}^{(2)}%
(\boldsymbol{\alpha})\wedge R_{i}^{(3)}(\boldsymbol{\alpha}))$.
Define
\[
r(\boldsymbol{\alpha})\doteq\inf_{\boldsymbol{x}\in A\times M^{K-1}}\left\{
2\sum_{\ell=1}^{K}\alpha_{\ell}V(x_{\ell})-\min_{\sigma\in\Sigma_{K}}\left\{
\sum_{\ell=1}^{K}\alpha_{\ell}V(x_{\sigma(\ell)})\right\}  \right\}  ,
\]
which is the same as $\inf_{\boldsymbol{x}\in A\times M^{K-1}}\left\{
2f(\boldsymbol{x},\boldsymbol{\alpha})+U(\boldsymbol{x})\right\}  $, where
$f(\boldsymbol{x},\boldsymbol{\alpha})\doteq\sum_{\ell=1}^{K}\alpha_{\ell}V(x_{\ell
})-U(\boldsymbol{x})$. As the next lemma shows, this optimization problem,
which plays a key role in the bounds we will derive, has an explicit solution.
Although a proof appears in \cite{dupwusna}, we include a slightly simpler proof of the special case needed here owing to its central role.

\begin{lemma}
We have
\[
\sup_{\boldsymbol{\alpha}\in\Delta}r(\boldsymbol{\alpha})=\left(  2-\left(
1/2\right)  ^{K-1}\right)  V(A),
\]
with the unique optimizer $\boldsymbol{\alpha}^{\ast}=(1,1/2,\dots
,(1/2)^{K-1}).$
\end{lemma}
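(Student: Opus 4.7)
The plan is to sandwich $\sup_{\boldsymbol{\alpha}\in\Delta}r(\boldsymbol{\alpha})$ between $(2-(1/2)^{K-1})V(A)$ on both sides, and then read off uniqueness from the equality case in the upper bound.

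For the upper bound, I will evaluate the infimum defining $r(\boldsymbol{\alpha})$ on the family of test points parametrized by $m\in\{1,\ldots,K\}$: take $V(x_1)=V(A)$, $V(x_\ell)=V(A)$ for $\ell\in\{K-m+2,\ldots,K\}$ (the $m-1$ positions with the smallest weights $\alpha_\ell$ among $\ell\geq 2$), and $V(x_\ell)=0$ elsewhere. Applying the rearrangement inequality to simplify the inner minimum yields $r(\boldsymbol{\alpha})\leq V(A)\tilde R_m(\boldsymbol{\alpha})$ with
\[
\tilde R_m(\boldsymbol{\alpha})\doteq 2+\sum_{j=K-m+2}^{K}\alpha_j-\alpha_{K-m+1},
\]
so $r(\boldsymbol{\alpha})\leq V(A)\min_m\tilde R_m(\boldsymbol{\alpha})$. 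To dominate this minimum by $2-(1/2)^{K-1}$ I will introduce the probability weights $w^*_m=(1/2)^m$ for $m<K$ and $w^*_K=(1/2)^{K-1}$, and verify by direct summation that $\sum_m w^*_m\tilde R_m(\boldsymbol{\alpha})=2-(1/2)^{K-1}$ at each of the $K$ vertices $(1,\underbrace{1,\ldots,1}_{k-1},0,\ldots,0)$ of $\overline{\Delta}$; linearity of $\tilde R_m$ in $\boldsymbol{\alpha}$ then extends the identity to all of $\overline{\Delta}$. Equality in $\min_m\tilde R_m(\boldsymbol{\alpha})=2-(1/2)^{K-1}$ forces every $\tilde R_m(\boldsymbol{\alpha})$ to attain this common value, and telescoping $\tilde R_{m+1}-\tilde R_m=2\alpha_{K-m+1}-\alpha_{K-m}$ combined with $\tilde R_1=2-\alpha_K$ then uniquely identifies $\alpha_\ell=(1/2)^{\ell-1}$, which proves both the uniform bound and uniqueness.

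For the matching lower bound at $\boldsymbol{\alpha}^*$, fix an arbitrary $\boldsymbol{v}=(V(x_1),\ldots,V(x_K))$ with $v_1\geq V(A)$ and $v_\ell\geq 0$. Let $v_{(1)}\leq\cdots\leq v_{(K)}$ denote the sorted values, let $\pi$ be the rank-permutation defined by $v_\ell=v_{(\pi(\ell))}$, and set $k^*\doteq\pi(1)$, so that $v_{(k^*)}\geq V(A)$. Rearrangement gives $\min_\sigma\sum\alpha^*_\ell v_{\sigma(\ell)}=\sum_\ell\alpha^*_\ell v_{(\ell)}$, and the objective reduces to $\sum_k c_k v_{(k)}$ with $c_k\doteq 2\alpha^*_{\pi^{-1}(k)}-\alpha^*_k$. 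Abel summation in the nonnegative increments $\delta_j\doteq v_{(j)}-v_{(j-1)}$ (with $v_{(0)}\doteq 0$) recasts the objective as $\sum_j\delta_j C_j$, $C_j\doteq\sum_{k\geq j}c_k$, subject to $\sum_{j\leq k^*}\delta_j\geq V(A)$. Using the geometric form of $\boldsymbol{\alpha}^*$ one checks that $C_j\geq 0$ for every $j$ and every $\pi$, so the infimum over $\boldsymbol{v}$ for fixed $\pi$ equals $V(A)\cdot\min_{j\leq k^*}C_j(\pi)$. The key inequality is then $C_j(\pi)\geq C_1=2-(1/2)^{K-1}$ for $j\leq k^*$, equivalently $\sum_{k=1}^{j-1}c_k\leq 0$: the constraint $\pi(1)=k^*\geq j$ forces $1\notin\pi^{-1}(\{1,\ldots,j-1\})$, so the maximum of $\sum_{\ell\in S}(1/2)^{\ell-1}$ over size-$(j-1)$ subsets $S\subset\{2,\ldots,K\}$ is attained on $S=\{2,\ldots,j\}$, and a geometric-series computation gives $\sum_{k=1}^{j-1}c_k\leq 2(1-(1/2)^{j-1})-(2-(1/2)^{j-2})=0$. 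Taking the infimum over $\pi$ completes the lower bound and matches the upper bound.

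The most delicate step will be the direct lower bound at $\boldsymbol{\alpha}^*$: reducing the continuous infimum over $\boldsymbol{v}$ to a finite combinatorial problem over rank-permutations $\pi$, and then establishing the extremal identity $\sum_{k<j}c_k\leq 0$ under $\pi(1)\geq j$. This uses in an essential way the exact ratio $\alpha^*_\ell/\alpha^*_{\ell-1}=1/2$, which is precisely what forces the extremal subset $S=\{2,\ldots,j\}$ to produce a tight zero. The dual weights $w^*_m$ in the upper bound are not obvious a priori but can be derived from the KKT conditions at $\boldsymbol{\alpha}^*$ (demanding that all the active $\tilde R_m(\boldsymbol{\alpha}^*)$ coincide); once guessed, the identity $\sum w^*_m\tilde R_m\equiv 2-(1/2)^{K-1}$ is a finite verification at the $K$ vertices of $\overline{\Delta}$.
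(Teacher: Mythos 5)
Your proof is correct, but it takes a genuinely different route from the paper. The paper first reduces the continuous infimum over $\boldsymbol{x}$ to an infimum over sorted value vectors $0\le V_2\le\cdots\le V_K\le V_1$, producing the quantity $\bar r(\boldsymbol{\alpha})$ whose integrand (after a summation-by-parts) is affine in $\boldsymbol{\alpha}$ for fixed $\boldsymbol{V}$; it then verifies a single saddle point $(\boldsymbol{\alpha}^*,\boldsymbol{V}^*)$ by checking that (i) the objective evaluated at $\boldsymbol{V}^*$ is \emph{constant} in $\boldsymbol{\alpha}$ on $\Delta$ (so $\sup_{\boldsymbol{\alpha}}\bar r\le(2-(1/2)^{K-1})V(A)$), and (ii) the infimum over $\boldsymbol{V}$ at $\boldsymbol{\alpha}^*$ is attained at $\boldsymbol{V}^*$. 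Your argument instead establishes the upper bound by evaluating $r$ on a discrete family of $K$ boolean test configurations ($V(x_\ell)\in\{0,V(A)\}$), producing the linear forms $\tilde R_m(\boldsymbol{\alpha})$, and then exhibiting an explicit strictly positive probability vector $w^*$ with $\sum_m w^*_m\tilde R_m\equiv 2-(1/2)^{K-1}$ on $\{\alpha_1=1\}$; and it establishes the matching lower bound by a separate direct computation of $r(\boldsymbol{\alpha}^*)$ via rank permutations, Abel summation in the sorted values, and the extremal subset estimate $\sum_{\ell\in S}(1/2)^{\ell-1}\le 1-(1/2)^{j-1}$ for $S\subset\{2,\dots,K\}$, $|S|=j-1$. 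The two proofs buy different things: the paper's reformulation is more compact, relying on the observation that the objective becomes flat in $\boldsymbol{\alpha}$ at $\boldsymbol{V}^*$, whereas your LP-duality style dual certificate with $w^*_m>0$ for all $m$ yields the uniqueness claim cleanly via complementary slackness plus telescoping $2\alpha_{\ell+1}=\alpha_\ell$ --- a point the paper's written proof does not actually argue (it verifies that $\boldsymbol{\alpha}^*$ is \emph{a} maximizer but not that it is the only one). Your lower-bound computation is a little heavier than strictly needed (one could instead reuse the paper's ordered reformulation and evaluate at $\boldsymbol{\alpha}^*$ directly), but it is correct as written, and the key step $C_j(\pi)\ge C_1$ for $j\le k^*$ is exactly where the ratio $1/2$ is forced.
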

\begin{proof}
The first step is to decompose 
$A\times M^{K-1}$ as $\cup_{\tau\in\Sigma_{K}}N_{\tau},$ where
\[
    N_{\tau}\doteq\left\{  \boldsymbol{x}\in A\times M^{K-1}:V\left(  x_{\tau\left(1\right)  }\right)  \leq V\left(  x_{\tau\left(  2\right)  }\right)\leq\cdots\leq V\left(  x_{\tau\left(  K\right)  }\right)  \right\}  .
\]
For any $\tau\in\Sigma_{K}$ there exists $i\in\{1,\ldots,K\}$ which depends on
$\tau$ such that $1=\tau\left( i\right)  .$ We will use the rearrangement inequality \cite[Section 10.2, Theorem 368]{harlitpol}, which says that if
$\boldsymbol{x}\in N_{\tau},$ then since $\alpha_{\ell}$ is nonincreasing in $\ell$ the
minimum in $U(\boldsymbol{x}) \doteq \min_{\sigma\in\Sigma_{K}}\{  \sum_{\ell=1}^{K}\alpha_{\ell}V\left(x_{\sigma\left(  \ell\right)  }\right)  \}$
is at $\sigma=\tau$. Thus,
\begin{align*}
    &\inf_{\boldsymbol{x}\in A\times M^{K-1}}\left[  2\sum_{\ell=1}^{K}%
    \alpha_{\ell}V\left(  x_{_{\ell}}\right)   -U(\boldsymbol{x})  \right] \\
    &\quad  =\min_{\tau\in\Sigma_{K}}\left\{  \inf_{\boldsymbol{x}\in N_{\tau}}\left[2\sum_{\ell=1}^{K}\alpha_{\ell}V\left(  x_{_{\ell}}\right)  -\min_{\sigma\in\Sigma_{K}}\left\{  \sum_{\ell=1}^{K}\alpha_{\ell}V\left(x_{\sigma\left(  \ell\right)  }\right)  \right\}  \right]  \right\} \\
    &\quad  =\min_{\tau\in\Sigma_{K}}\left\{  \inf_{\boldsymbol{x}\in N_{\tau}}\left[\sum_{\ell=1}^{K}\left(  2\alpha_{\tau\left(  \ell\right)  }-\alpha_{\ell}\right)V\left(  x_{\tau\left(  \ell\right)  }\right)   \right]
\right\}  .
\end{align*}

Let $\beta_{\ell}\doteq2\alpha_{\tau\left(  \ell\right)  }-\alpha_{\ell}$, and for each
$i\in\{1,\ldots,K\}$ define the sets
\[
N_{\tau}^{i}\doteq\left\{  \left(  x_{\tau\left(  1\right)  },\ldots
,x_{\tau\left(  i \right)  }\right)  :\boldsymbol{x}\in N_{\tau}\right\}
\]
and
\[
\bar{N}_{\tau}^{i}\left(  \boldsymbol{y}\right)  \doteq\left\{  \left(
x_{\tau\left(  i\right)  },\ldots,x_{\tau\left(  K\right)  }\right)
:\boldsymbol{x}\in N_{\tau}\text{ and }\left(  x_{\tau\left(  1\right)  }%
,\ldots,x_{\tau\left(  i\right)  }\right)  =\boldsymbol{y}\right\}  .
\]
Note that for each $\tau$ (and using that $i$ is the index such that
$\tau\left(  i\right)  =1$)%
\begin{align*}
     &\inf_{\boldsymbol{x}\in N_{\tau}}\left[  \sum_{\ell=1}^{K}\beta_{\ell}V\left(  x_{\tau\left(  \ell\right)  }\right)  \right] \\
    &  =\inf_{\left(  y_{1},\ldots,y_{i}\right)  \in N_{\tau}^{i}}\left[
        \begin{array}[c]{c}%
            \sum_{\ell=1}^{i-1}\beta_{\ell}V\left(  y_{\ell}\right)  +\beta_{i}V\left(y_{i}\right)   \\
            +\inf_{\left(  z_{i},\ldots,z_{K}\right)  \in\bar{N}_{\tau}^{i}\left(y_{1},\ldots,y_{i}\right)  }\left[  \sum_{\ell=i+1}^{K}\beta_{\ell}V\left(z_{\ell}\right)  \right]
        \end{array}
    \right]  .
\end{align*}

Next we show that given $\left(  y_{1},\ldots,y_{i}\right)  $ (and noting that
by definition $z_{i}=y_{i}$),
\begin{equation}
\inf_{\left(  z_{i},\ldots,z_{K}\right)  \in\bar{N}_{\tau}^{i}\left(
y_{1},\ldots,y_{i}\right)  }\left[  \sum_{\ell=i+1}^{K}\beta_{\ell}V\left(
z_{\ell}\right)  \right]  =\left(  \sum_{\ell=i+1}^{K}\beta_{\ell}\right)  V\left(
y_{i}\right)  . \label{eqn:parconst}%
\end{equation}
Recall that $\alpha_1\geq\alpha_2\cdots\geq\alpha_K>0$. Therefore, $\beta_K=2\alpha_{\tau(K)}-\alpha_K\geq 2\alpha_K-\alpha_K=\alpha_K>0$. More generally, since $\tau(\ell),\dots,\tau(K)$ are distinct values drawn from $\{1,\dots,K\}$, for each $\ell$
\[
    \beta_{\ell}+\dots+\beta_K = 2\sum_{j=\ell}^K \alpha_{\tau(j)} - \sum_{j=\ell}^K \alpha_j \geq 2 \sum_{j=\ell}^K \alpha_j- \sum_{j=\ell}^K \alpha_j>0.
\]
Using
$\beta_{K}\geq0$ and the fact that $\left(  z_{i},\ldots,z_{K}\right)  \in
\bar{N}_{\tau}^{i}\left(  y_{1},\ldots,y_{i}\right)  $ implies the restriction%
\[
    V\left(  z_{i}\right)  \leq V\left(  z_{i+1}\right)  \leq\cdots\leq V\left(z_{K}\right)  ,
\]
we can rewrite the infimum as
\begin{align*}
&  \inf_{\left(  z_{i},\ldots,z_{K}\right)  \in\bar{N}_{\tau}^{i}\left(
y_{1},\ldots,y_{i}\right)  }\left[ \sum_{\ell=i+1}^{K}\beta_{\ell}V\left(z_{\ell}\right)  \right] \\
&  \quad=\inf_{\left(  z_{i},\ldots,z_{K}\right)  \in\bar{N}_{\tau}^{i}\left(y_{1},\ldots,y_{i}\right)  }\left[  \sum_{\ell=i+1}^{K-2}\beta_{\ell}V\left(z_{\ell}\right)  +\left(  \beta_{K-1}+\beta_{K}\right)  V\left(  z_{K-1}\right)
\right]  .
\end{align*}
Iterating, we have (\ref{eqn:parconst}). 
Letting $D\doteq\{V(x):x\in A\}$,%
\begin{align*}
&  \inf_{\boldsymbol{x}\in A\times M^{K-1}}\left[  2\sum_{\ell=1}^{K}\alpha
_{\ell}V\left(  x_{_{\ell}}\right)    -\min_{\sigma\in
\Sigma_{K}}\left\{  \sum_{\ell=1}^{K}\alpha_{\ell}V\left(  x_{\sigma\left(
\ell\right)  }\right)  \right\}  \right] \\
&  =\min_{\tau\in\Sigma_{K}}\left\{  \inf_{\boldsymbol{x}\in N_{\tau}}\left[
\sum_{\ell=1}^{K}\left(  2\alpha_{\tau\left(  \ell\right)  }-\alpha_{\ell}\right)
V\left(  x_{\tau\left(  \ell\right)  }\right)   \right]
\right\} \\
&  =\min_{\tau\in\Sigma_{K}}\left\{  \inf_{\left(  x_{\tau\left(  1\right)
},\ldots,x_{\tau\left(  i\right)  }\right)  \in N_{\tau}^{i}}\left[
\sum_{\ell=1}^{i-1}\beta_{\ell}V\left(  x_{\tau\left(  \ell\right)  }\right)  +\left(
\sum_{\ell=i}^{K}\beta_{\ell}\right)  V\left(  x_{\tau\left(  i\right)  }\right)
 \right]  \right\} \\
&  =\min_{\tau\in\Sigma_{K}}\left\{  \inf_{\substack{\left\{  
V_{\tau\left(  i\right)}    \in D\right\}  \\\left\{  \left(  V_{\tau\left(  1\right)
},\ldots,V_{\tau\left(  i-1\right)  }\right)  :V_{\tau\left(  1\right)  }\leq
V_{\tau\left(  2\right)  }\leq\cdots\leq V_{\tau\left(  i\right)  }\right\}
}}\left[  \sum_{\ell=1}^{i-1}\beta_{\ell}V_{\tau\left(  \ell\right)  }+\left(
\sum_{\ell=i}^{K}\beta_{\ell}\right)  V_{\tau\left(  i\right)  }\right]
\right\}  .
\end{align*}
The last equality holds because $V$ is continuous.

We claim that the last display coincides with 
\begin{align*}
\bar{r}\left(  \boldsymbol{\alpha}\right)   &  \doteq\inf_{\substack{
\{ V_{1}  \in D\}\\\left\{  (V_{1},\ldots,V_{K}):V_{\ell}\in\lbrack
0,V_{1}]\text{ for }\ell\geq2\right\}  }}\left[  2\sum_{\ell=1}^{K}\alpha_{\ell}%
V_{\ell}-\min_{\sigma\in\Sigma_{K}}\left\{  \sum_{\ell=1}^{K}\alpha_{\textbf{}}V_{\sigma\left(  \ell\right)  }\right\}  \right] \\
&  =\min_{\tau\in\Sigma_{K}}\left\{  \inf_{_{\substack{\left\{ 
V_{\tau\left(  i\right)}  \in D\right\}  \\\left\{  \left(  V_{\tau\left(  1\right)
},\cdots,V_{\tau\left(  K\right)  }\right)  :V_{\tau\left(  1\right)  }\leq
V_{\tau\left(  2\right)  }\leq\cdots\leq V_{\tau\left(  K\right)  }\leq
V_{\tau\left(  i\right)  }\right\}  }}}\left[  \sum_{\ell=1}^{K}\left(
2\alpha_{\tau\left(  \ell\right)  }-\alpha_{\ell}\right)  V_{\tau\left(  \ell\right)
}\right]  \right\}  .
\end{align*}
Since $\boldsymbol{V}\in N_{\tau}$ implies $V_{\tau\left(  \ell\right)  }\geq
V_{\tau\left(  i\right)  }$ and hence $V_{\tau\left(  \ell\right)  }%
=V_{\tau\left(  i\right)  }$ for $i<\ell\leq K$,%
\begin{align*}
&  \inf_{_{_{\substack{\left\{   V_{\tau\left(  i\right)  } \in D\right\}  \\\left\{
\left(  V_{\tau\left(  1\right)  },\ldots,I_{\tau\left(  K\right)  }\right)
:V_{\tau\left(  1\right)  }\leq V_{\tau\left(  2\right)  }\leq\cdots\leq
V_{\tau\left(  K\right)  }\leq V_{\tau\left(  i\right)  }\right\}  }}}}\left[
\sum_{\ell=1}^{K}\beta_{\ell}V_{\tau\left(  \ell\right)  }\right] \\
&  \quad=\inf_{_{\substack{\left\{   V_{\tau\left(  i\right)
} \in D\right\}
\\\left\{ \left( V_{\tau\left(  1\right)  },\ldots,V_{\tau\left(
i-1\right)  }\right)  :V_{\tau\left(  1\right)  }\leq V_{\tau\left(  2\right)
}\leq\cdots\leq V_{\tau\left(  i\right)  }\right\}  }}}\left[  \sum
_{\ell=1}^{i-1}\beta_{\ell}V_{\tau\left(  \ell\right)  }+\left(  \sum_{\ell=i}^{K}%
\beta_{\ell}\right)  V_{\tau\left(  i\right)  }\right]  ,
\end{align*}
which establishes the claim.

To prove that $\sup_{\boldsymbol{\alpha}}\bar{r}\left(
\boldsymbol{\alpha}\right)  =\{(2-\left(  1/2\right)  ^{K-1})V\left(
A\right)   \},$ first rewrite $\bar{r}\left(
\boldsymbol{\alpha}\right)  $ by noticing that since $V_{1}$ is the largest
value in the set $\boldsymbol{V}$,%
\[
\min_{\tau\in\Sigma_{K}}\left\{  \sum_{\ell=1}^{K}\alpha_{\ell}V_{\tau\left(
\ell\right)  }\right\}
\]
obtains the minimum at some $\tau\in\Sigma_{K}$ with $\tau\left(  K\right)
=1$. Therefore
\begin{align*}
&  \bar{r}\left(  \boldsymbol{\alpha}\right) \\
&  =\inf_{\substack{\left(  V_{1},F\right)  \in D\\\left\{  \boldsymbol{V}%
:V_{\ell}\leq V_{1}\text{ for }\ell\geq2\right\}  }}\left[  \left(  2\alpha
_{1}-\alpha_{K}\right)  V_{1}+2\sum_{\ell=2}^{K}\alpha_{\ell}V_{\ell}-\min_{\tau
\in\Sigma_{K},\tau\left(  K\right)  =1}\left\{  \sum_{\ell=1}^{K-1}\alpha
_{\ell}V_{\tau\left(  \ell\right)  }\right\}  \right]  .
\end{align*}
Suppose we are given any $K-1$ numbers and assign them to $\left\{
V_{\ell}\right\}_{\ell=2,\ldots,K}$ in a certain order. Then the value of%
\[
\min_{\tau\in\Sigma_{K},\tau\left(  K\right)  =1}\left\{  \sum_{\ell=1}%
^{K-1}\alpha_{\ell}V_{\tau\left(  \ell\right)  }\right\}
\]
is independent of the order. But since $\alpha_{1}\geq\cdots\geq\alpha_{K}%
\geq0$, by the rearrangement inequality, the smallest value of 
$
\sum_{\ell=2}^{K}\alpha_{\ell}V_{\ell}%
$ 
is obtained by taking the $V_{\ell},$ $\ell\geq2$ in increasing order. By choosing this ordering of $\left\{  V_{\ell}\right\}_{\ell=2,\ldots,K}$,
\[
\min_{\tau\in\Sigma_{K},\tau\left(  K\right)  =1}\left\{  \sum_{\ell=1}%
^{K-1}\alpha_{\ell}V_{\tau\left(  \ell\right)  }\right\}  =\sum_{\ell=2}^{K}\alpha_{\ell-1}V_{\ell}.
\]
Thus,%
\begin{align}
 \bar{r}\left(  \boldsymbol{\alpha}\right) &  =\inf_{\substack{  V_{1}  \in D\\\left\{  \boldsymbol{V}:0\leq V_{2}\leq\cdots\leq V_{K}\leq V_{1}\right\}  }}\left[  \left(  2\alpha_{1}-\alpha_{K}\right)  V_{1}+2\sum_{\ell=2}^{K}\alpha_{\ell}V_{\ell}-\sum_{\ell=2}^{K}\alpha_{\ell-1}V_{\ell}\right] \nonumber\\
&  =\inf_{\substack{ V_{1} \in D\\\left\{  \boldsymbol{V}:0\leq V_{2}\leq\cdots\leq V_{K}\leq V_{1}\right\}  }}\left[  \left(  2\alpha_{1}-\alpha_{K}\right)  V_{1}+\sum_{\ell=2}^{K}\left(  2\alpha_{\ell}-\alpha_{\ell-1}\right)  V_{\ell}\right]  .\label{eqn:Vaexp}
\end{align}
Using summation by parts and $\alpha_{1}=1$, we have
\begin{align}
&  \bar{r}\left(  \boldsymbol{\alpha}\right) \label{eqn:Vaexp2}\\
&  =\inf_{\substack{  V_{1}  \in D\\\left\{  \boldsymbol{V}:0\leq V_{2}\leq\cdots\leq V_{K}\leq V_{1}\right\}  }}\left[  \left(  2\alpha_{1}-\alpha_{K}\right)  V_{1}+\sum_{\ell=2}^{K-1}\alpha_{\ell}\left(  2V_{\ell}-V_{\ell+1}\right)  +2\alpha_{K}V_{K}-V_{2}\right]  .\nonumber
\end{align}
Since $V$ is continuous and bounded from below, there is $ V_{0}  \in \bar{D}$ such that
\[
\left(  2-\left(  1/2\right)  ^{K-1}\right)  V_{0}=\left[  \left(  2-\left(  1/2\right)  ^{K-1}\right)  V\left(A\right)    \right]  .
\]
Let $\boldsymbol{\alpha}^{\ast}\doteq\left(  1,1/2,\ldots,1/2^{K-1}\right)  $ and $\boldsymbol{V}^{\ast}=\left(  V_{1}^{\ast},\ldots,V_{K}^{\ast}\right),$ with $V_{1}^{\ast}\doteq V_{0},$ $V_{\ell}^{\ast}\doteq\left(  1/2\right)^{K-\ell+1}V_{0}$ for $\ell=2,\ldots,K$. We have the following inequalities, which
are explained after the display:
\begin{align*}
&  \left(  2-\left(  1/2\right)  ^{K-1}\right)  V_{0}\\
&\quad  =\inf_{\substack{  V_{1}  \in D\\\left\{  \boldsymbol{V}:0\leq V_{2}\leq\cdots\leq V_{K}\leq V_{1}\right\}  }}\left[  \left(  2\alpha_{1}^{\ast}-\alpha_{K}^{\ast}\right)  V_{1}+\sum_{\ell=2}^{K}\left(  2\alpha_{\ell}^{\ast}-\alpha_{\ell-1}^{\ast}\right)V_{\ell}\right] \\
&\quad  =\bar{r}\left(  \boldsymbol{\alpha}^{\ast}\right) \\
&\quad  \leq\sup_{\boldsymbol{\alpha}}\bar{r}\left(  \boldsymbol{\alpha}\right) \\
&\quad  \leq\sup_{\boldsymbol{\alpha}}\left[  \left(  2\alpha_{1}-\alpha
_{K}\right)  V_{1}^{\ast}+\sum_{\ell=2}^{K-1}\alpha_{\ell}\left(  2V_{\ell}^{\ast}-V_{\ell+1}^{\ast}\right)  +2\alpha_{K}V_{K}^{\ast}-V_{2}^{\ast}\right]
\\
&\quad  =\left(  2-\left(  1/2\right)  ^{K-1}\right)  V_{0}.
\end{align*}
The first equality follows from $2\alpha_{\ell}^{\ast}-\alpha_{\ell-1}^{\ast}=0$ for $\ell=2,\ldots,K$; the second equality from (\ref{eqn:Vaexp}); the second inequality is from (\ref{eqn:Vaexp2}); the third equality uses $\alpha_{1}=1$, $2V_{\ell}^{\ast}-V_{\ell+1}^{\ast}=0$ for $\ell=2,\ldots,K$, $-\alpha_{K}V_{1}^{\ast}+2\alpha_{K}V_{K}^{\ast}=0$ and $V_{2}^{\ast}=\left(  1/2\right)^{K-1}V_{0}$. We therefore obtain%
\[
\sup_{\boldsymbol{\alpha}}\bar{r}\left(  \boldsymbol{\alpha}\right)  =\left\{  \left(  2-\left( 1/2\right)  ^{K-1}\right)  V\left(A\right)    \right\}  .
\]
\end{proof} 

In the rest of the subsection, we will show that for any $\boldsymbol{\alpha
}\in\Delta$, both $\min_{i\in L}(R_{i}^{(1)}(\boldsymbol{\alpha})\wedge
R_{i}^{(2)}(\boldsymbol{\alpha}))$ and $\min_{i\in L}(R_{i}^{(1)}%
(\boldsymbol{\alpha})\wedge R_{i}^{(2)}(\boldsymbol{\alpha})\wedge R_{i}%
^{(3)}(\boldsymbol{\alpha}))$ are bounded below by quantities slightly smaller
than $r(\boldsymbol{\alpha})$. Actually, we will find lower bounds for
$\min_{i\in L}R_{i}^{(k)}(\boldsymbol{\alpha})$ for $k=1,2$ and $3,$
individually. The precise statement is given in the following lemma.

\begin{lemma}
\label{lem:5.5}
For any $\boldsymbol{\alpha}\in\Delta$, we have $\min_{i\in L}R_{i}%
^{(1)}(\boldsymbol{\alpha})=r(\boldsymbol{\alpha})$, $\min_{i\in L}R_{i}%
^{(2)}(\boldsymbol{\alpha})\geq r(\boldsymbol{\alpha})-h\vee w$ and
$\min_{i\in L}R_{i}^{(3)}(\boldsymbol{\alpha})\geq r(\boldsymbol{\alpha})-w$.
\end{lemma}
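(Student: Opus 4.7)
The plan is to use Lemma \ref{Lem:4.1} to rewrite all $W$-differences in terms of $U$ (recalling $U(O_1) = 0$), and then combine Remark \ref{rmk:4.1}, which gives $Q(O_i,\boldsymbol{x}) \geq U(\boldsymbol{x}) - U(O_i)$ for every $i$, with the positivity $Q \geq 0$ to produce pointwise inequalities that reduce each bound to a comparison with $r(\boldsymbol{\alpha}) = \inf_{\boldsymbol{x}}[2f(\boldsymbol{x},\boldsymbol{\alpha}) + U(\boldsymbol{x})]$.

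For the equality $\min_i R_i^{(1)} = r(\boldsymbol{\alpha})$, Lemma \ref{Lem:4.1} gives $W(O_i) - W(O_1) = U(O_i)$, so after commuting the independent $\min_i$ and $\inf_{\boldsymbol{x}}$ one has
\[
\min_{i\in L} R_i^{(1)} = \inf_{\boldsymbol{x}\in A\times M^{K-1}}\Bigl[2f(\boldsymbol{x}) + \min_{i\in L}\bigl(U(O_i) + Q(O_i, \boldsymbol{x})\bigr)\Bigr] = \inf_{\boldsymbol{x}}[2f + U] = r(\boldsymbol{\alpha}),
\]
the second equality being Remark \ref{rmk:4.1}.

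The central step for the other two bounds is the pointwise \emph{master inequality}
\[
2Q(O_i, \boldsymbol{x}) + U(O_i) \geq U(\boldsymbol{x}), \qquad i\in L,\ \boldsymbol{x}\in M^K,
\]
which follows by combining $Q(O_i,\boldsymbol{x}) \geq U(\boldsymbol{x}) - U(O_i)$ with $Q(O_i,\boldsymbol{x}) \geq 0$ to deduce $Q(O_i,\boldsymbol{x}) \geq [U(\boldsymbol{x}) - U(O_i)]^+ \geq (U(\boldsymbol{x}) - U(O_i))/2$. Adding $2f(\boldsymbol{x})$ to both sides and taking $\inf_{\boldsymbol{x}}$ yields $\inf_{\boldsymbol{x}}[2f + 2Q(O_i, \cdot) + U(O_i)] \geq r(\boldsymbol{\alpha})$. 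For $R_i^{(2)}$ with $i \neq 1$, Lemma \ref{Lem:4.1} reduces $W(O_i) - 2W(O_1) + W(O_1\cup O_i)$ to $U(O_i) + [W(O_1\cup O_i) - W(O_1)]$, and since $W(O_1\cup O_i) \geq W(O_1) - w$ by the definition of $w$, one obtains $R_i^{(2)} \geq r(\boldsymbol{\alpha}) - w \geq r(\boldsymbol{\alpha}) - h\vee w$. For $i = 1$, the stronger bound $Q(O_1, \boldsymbol{x}) \geq U(\boldsymbol{x})$ (Remark \ref{rmk:4.1} with $j = 1$ and $U(O_1)=0$) together with $U \geq 0$ gives $R_1^{(2)} \geq r(\boldsymbol{\alpha}) - h \geq r(\boldsymbol{\alpha}) - h\vee w$.

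For $R_i^{(3)}$, Lemma \ref{Lem:4.1} reduces the $W$-terms to $2U(O_i) - w$, and the unhalved inequality $Q(O_i,\boldsymbol{x}) \geq U(\boldsymbol{x}) - U(O_i)$ gives $2(f + Q(O_i,\cdot)) + 2U(O_i) \geq 2f + 2U \geq 2f + U$, so taking infimums yields $R_i^{(3)} \geq r(\boldsymbol{\alpha}) - w$ uniformly in $i$. The main conceptual point is the halved master inequality in the treatment of $R_i^{(2)}$ with $i \neq 1$: a direct use of $Q \geq U - U(O_i)$ would yield only $R_i^{(2)} \geq r(\boldsymbol{\alpha}) - U(O_i) - w$, which degrades with $U(O_i)$; averaging with $Q \geq 0$ absorbs the $U(O_i)$ term into $U(\boldsymbol{x})$ and produces a bound uniform in $i$.
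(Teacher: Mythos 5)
Your proof is correct and rests on the same core facts as the paper's: Lemma \ref{Lem:4.1}, Remark \ref{rmk:4.1}, and the nonnegativity of $Q$ and $U$. The organization is mildly different and arguably cleaner. For $R_i^{(2)}$ the paper first uses $Q\ge 0$ in the form $2\inf_{\boldsymbol{x}}[f+Q(O_i,\boldsymbol{x})]\ge\inf_{\boldsymbol{x}}[2f+Q(O_i,\boldsymbol{x})]$, then commutes $\min_i$ with $\inf_{\boldsymbol{x}}$ and invokes the exact identity $\min_i\{Q(O_i,\boldsymbol{x})+U(O_i)\}=U(\boldsymbol{x})$, finishing with a joint $\wedge$-estimate of $R_1^{(2)}$ and $\min_{i\ne1}R_i^{(2)}$ that produces $-h\vee w$. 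You instead isolate the pointwise inequality $2Q(O_i,\boldsymbol{x})+U(O_i)\ge U(\boldsymbol{x})$, valid for every $i$ --- which one can also get directly as $2Q+U(O_i)=Q+[Q+U(O_i)]\ge 0+U(\boldsymbol{x})$, making the $[\cdot]^+$ and halving detour unnecessary though correct --- and then bound each $R_i^{(2)}$ individually: $R_1^{(2)}\ge r(\boldsymbol{\alpha})-h$ and $R_i^{(2)}\ge r(\boldsymbol{\alpha})-w$ for $i\ne1$, from which $\min_i R_i^{(2)}\ge r(\boldsymbol{\alpha})-h\vee w$ is immediate. Your treatment of $R_i^{(3)}$ similarly applies the Remark \ref{rmk:4.1} inequality at each fixed $i$ rather than via the exact minimization identity after commuting. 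Your closing observation --- that a direct use of $Q\ge U-U(O_i)$ leaves a residual $-U(O_i)$ and that a spare copy of $Q\ge 0$ is what absorbs it and makes the bound uniform in $i$ --- accurately identifies the mechanism, which the paper's proof uses implicitly but does not spell out. Both routes yield identical constants.
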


\begin{proof}
First note that
\begin{align*}
\min_{i\in L}R_{i}^{(1)}(\boldsymbol{\alpha})  &  =\min_{i\in L}\left(
\inf_{\boldsymbol{x}\in A\times M^{K-1}}\left\{  2f(\boldsymbol{x}%
,\boldsymbol{\alpha})+Q(O_{i},x)\right\}  +W(O_{i})-W(O_{1})\right) \\
&  =\inf_{\boldsymbol{x}\in A\times M^{K-1}}\left\{  2f(\boldsymbol{x}%
,\boldsymbol{\alpha})+\min_{i\in L}\left[  Q(O_{i},x)+W(O_{i})\right]
-W(O_{1})\right\} \\
&  =\inf_{\boldsymbol{x}\in A\times M^{K-1}}\left\{  2f(\boldsymbol{x}%
,\boldsymbol{\alpha})+W(\boldsymbol{x})-W(O_{1})\right\} \\
&  =\inf_{\boldsymbol{x}\in A\times M^{K-1}}\left\{  2f(\boldsymbol{x}%
,\boldsymbol{\alpha})+U(\boldsymbol{x})\right\}  =r(\boldsymbol{\alpha}),
\end{align*}
where we use \eqref{eqn:W_x} for the third equality and Lemma \ref{Lem:4.1}
for the fourth equality. Moreover, since  
\begin{align*}
&  \min_{i\in L\setminus\{1\}}R_{i}^{(2)}(\boldsymbol{\alpha})\\
&  \quad=\min_{i\in L\setminus\{1\}}\left[  2\inf_{\boldsymbol{x}\in A\times
M^{K-1}}[f(\boldsymbol{x},\boldsymbol{\alpha})+Q(O_{i},\boldsymbol{x}%
)]+W(O_{i})-2W(O_{1})+W(O_{1}\cup O_{i})\right] \\
&  \quad\geq \inf_{\boldsymbol{x}\in A\times M^{K-1}}[2f(\boldsymbol{x}%
,\boldsymbol{\alpha})+\min_{i\in L\setminus\{1\}}\left\{  Q(O_{i}%
,\boldsymbol{x})+W(O_{i})-W(O_{1})\right\}  ]\\
&  \qquad\qquad-W(O_{1})+\min_{i\in L\setminus\{1\}}W(O_{1}\cup O_{i})\\
&  \quad=\inf_{\boldsymbol{x}\in A\times M^{K-1}}[2f(\boldsymbol{x}%
,\boldsymbol{\alpha})+\min_{i\in L\setminus\{1\}}\left\{  Q(O_{i}%
,\boldsymbol{x})+U(O_{i})\right\}  ]-w,
\end{align*}
using $U\geq0$ we obtain
\begin{align*}
\min_{i\in L}R_{i}^{(2)}(\boldsymbol{\alpha})  &  =R_{1}^{(2)}%
(\boldsymbol{\alpha})\wedge\left(  \min_{i\in L\setminus\{1\}}R_{i}%
^{(2)}(\boldsymbol{\alpha})\right) \\
&  \geq\left(  \inf_{\boldsymbol{x}\in A\times M^{K-1}}[2f(\boldsymbol{x},\boldsymbol{\alpha})+Q(O_{1},\boldsymbol{x})]-h\right) \\
&  \qquad\wedge\left(  \inf_{\boldsymbol{x}\in A\times M^{K-1}}%
[2f(\boldsymbol{x},\boldsymbol{\alpha})+\min_{i\in L\setminus\{1\}}\left\{
Q(O_{i},\boldsymbol{x})+U(O_{i})\right\}  ]-w\right) \\
&  \geq\inf_{\boldsymbol{x}\in A\times M^{K-1}}[2f(\boldsymbol{x},\boldsymbol{\alpha})+\min_{i\in L}\left\{  Q(O_{i},\boldsymbol{x})+U(O_{i}%
)\right\}  ]-h\vee w\\
&  =\inf_{\boldsymbol{x}\in A\times M^{K-1}}[2f(\boldsymbol{x},\boldsymbol{\alpha})+U(\boldsymbol{x})]-h\vee w\\
&  =r(\boldsymbol{\alpha})-h\vee w,
\end{align*}
where the second equality is from Remark \ref{rmk:4.1}.
Lastly,
\begin{align*}
&  \min_{i\in L}R_{i}^{(3)}(\boldsymbol{\alpha})\\
&  \quad=\min_{i\in L}\left\{  2\inf_{\boldsymbol{x}\in A\times M^{K-1}%
}[f(\boldsymbol{x},\boldsymbol{\alpha})+Q(O_{i},\boldsymbol{x}%
)]+2W(O_{i})-2W(O_{1})-w\right\} \\
&  \quad=\min_{i\in L}\left\{  2\inf_{\boldsymbol{x}\in A\times M^{K-1}%
}[f(\boldsymbol{x},\boldsymbol{\alpha})+Q(O_{i},\boldsymbol{x}%
)]+2U(O_{i})\right\}  -w\\
&  \quad=2\inf_{\boldsymbol{x}\in A\times M^{K-1}}[f(\boldsymbol{x},\boldsymbol{\alpha})+\min_{i\in L}\left\{  Q(O_{i},\boldsymbol{x})+U(O_{i}%
)\right\}  ]-w\\
&  \quad=2\inf_{\boldsymbol{x}\in A\times M^{K-1}}[f(\boldsymbol{x},\boldsymbol{\alpha})+U(\boldsymbol{x})]-w\\
&  \quad\geq\inf_{\boldsymbol{x}\in A\times M^{K-1}}[2f(\boldsymbol{x},\boldsymbol{\alpha})+U(\boldsymbol{x})]-w\\
&  \quad=r(\boldsymbol{\alpha})-w.
\end{align*}

\end{proof}

\section{Bounds on the error terms $h$ and $w$}
\label{sec:6}
Lemma \ref{lem:5.5} shows that for any collection of temperature ratios $\boldsymbol{\alpha}%
\in\Delta$, $\liminf_{\varepsilon\rightarrow0}-\varepsilon\log(T^{\varepsilon
}\cdot\text{$\mathrm{Var}_{x}(\theta_{\text{INS}}^{\varepsilon,T^{\varepsilon
}})$})$ is always bounded below by $r(\boldsymbol{\alpha})-h\vee w$.  

It remains to bound $h$ and $w$ for the INS model. Let $H$ be the index
set for equilibrium points of $V$ and let $y_{i}\in M$ be the equilibrium
corresponding to index $i\in H$. Recall that we assumed $y_{1}$ is the unique
global minimum of $V$. Let $b_{1}$ be the minimum barrier height of $y_{1}$,
namely,
\begin{equation}
b_{1}\doteq\min_{j\in H\setminus\{1\}}\hat{Q}(y_{j},y_{1}),
\label{eqn:defofbs}%
\end{equation}
where $\hat{Q}$ is the quasipotential associated with the original diffusion \eqref{eqn:dym_original}, and $\hat W$ is defined analogously to $W$ but for this process.

\begin{lemma}
\label{lem:hon}$h\doteq\min_{i\in L\setminus\{1\}}Q(O_{1},O_{i}%
)=\alpha_{K}b_{1}.$
\end{lemma}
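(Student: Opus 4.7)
To prove $h = \alpha_K b_1$, the plan is to sandwich $Q(O_1, O_i)$ from above (for a well-chosen $O_i \ne O_1$) and from below (uniformly in $i \in L\setminus\{1\}$) by $\alpha_K b_1$. The key low-$\varepsilon$ mechanism driving both bounds is that the weights $w^\varepsilon$ defined in \eqref{eqn:defofw} concentrate on the rearrangement permutation pairing the smallest values of $V(x_1),\dots,V(x_K)$ with the largest coefficients $\alpha_1,\dots$ (and the largest with $\alpha_K$). Consequently, along any path in which a single coordinate strictly dominates the $V$-values of the others, that coordinate effectively diffuses with coefficient $\sqrt{2\varepsilon/\alpha_K}$, while coordinates sitting near $y_1$ (a local minimum of $V$ with zero drift) contribute nothing to the INS action.

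For the upper bound $h \le \alpha_K b_1$, I would fix $j^* \in H\setminus\{1\}$ attaining the minimum in \eqref{eqn:defofbs}, take a near-optimal trajectory $\gamma \subset M$ for the original diffusion \eqref{eqn:dym_original} realizing the value $b_1$, and lift $\gamma$ into the $K$-th coordinate of a path $\boldsymbol\phi$ in $M^K$ with the first $K-1$ coordinates pinned at $y_1$. This path connects $O_1$ to the equilibrium $(y_1,\dots,y_1,y_{j^*}) \in \{y_1,\dots,y_H\}^K = \cup_i\{O_i\}$. On the interior we have $V(\phi_K) > 0 = V(\phi_\ell)$ for $\ell < K$, so $\phi_K$ has rank $K$ and its contribution to the INS action is exactly $\alpha_K$ times the single-particle action of $\gamma$, while the other coordinates contribute zero. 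A mollification argument near the endpoints, where $V(\phi_K) \to 0$ makes the rank ambiguous, then delivers $Q(O_1,(y_1,\dots,y_1,y_{j^*})) \le \alpha_K b_1 + o(1)$, hence $h \le \alpha_K b_1$.

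For the matching lower bound $h \ge \alpha_K b_1$, consider any path $\boldsymbol\phi$ from $O_1$ to some $O_i \ne O_1$. Since at least one coordinate of $O_i$ differs from $y_1$, the path must contain a coordinate, say $\phi_\ell$, that leaves the basin of $y_1$ and enters the basin of another local minimum $y_j$ of $V$. Along the subsegment during which $\phi_\ell$ ascends from $y_1$ to the separating saddle, $V(\phi_\ell)$ strictly dominates the $V$-values of the remaining coordinates (still near $y_1$), so $\phi_\ell$ has rank $K$ and contributes at least $\alpha_K$ times the single-particle quasipotential of the projected trajectory. Minimizing over which basin is reached and invoking the reversibility of \eqref{eqn:dym_original} to identify the resulting minimum with $b_1$ (via the definition in \eqref{eqn:defofbs}) yields $Q(O_1, O_i) \ge \alpha_K b_1$ uniformly in $i$.

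The main obstacle is the careful handling of the discontinuous statistics of $\rho^\varepsilon$ at configurations with coinciding $V$-values---in particular at $O_1$ itself, where all coordinates take value $V = 0$ and the rank is ill-defined. This is handled through the treatment of LDPs for processes with discontinuous statistics outlined in the appendix; alternatively, one may bypass direct path constructions by combining Lemma \ref{Lem:4.1} with the graph-theoretic formulas \eqref{eqn:Wtwarg}--\eqref{eqn:W_x} to reduce $Q(O_1,O_i)$ to a minimum-spanning-tree computation that is evaluable algebraically in terms of $\hat Q$-quasipotentials of \eqref{eqn:dym_original}.
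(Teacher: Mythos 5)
Your approach is genuinely different from the paper's. You build explicit paths in $M^{K}$ and estimate the INS rate function directly, which forces you to grapple with the discontinuous statistics of the $\rho^{\varepsilon}$ weights near $O_{1}$ (where all coordinates have $V=0$ and the rank is ambiguous). The paper sidesteps this entirely: it defines $D_{1}$ as the domain of attraction of $O_{1}$, notes that $O_{1}$ is the only equilibrium inside $D_{1}$, and invokes the Freidlin--Wentzell exit theorem (Theorem 4.3, Chapter 4 of \cite{frewen2}) together with the \emph{known} stationary distribution $\nu^{\varepsilon}\propto e^{-U/\varepsilon}$ to identify the exit quasipotential as $\inf_{\partial D_{1}}U-\inf_{D_{1}}U=U(O_{2})-U(O_{1})=\alpha_{K}b_{1}$, with $O_{2}=(y_{1},\ldots,y_{1},y_{2})$ and $y_{2}$ the lowest saddle on $\partial D_{1}$; the reverse inequality is then $h\leq Q(O_{1},O_{2})\leq Q_{D_{1}}(O_{1},O_{2})=U(O_{2})-U(O_{1})$. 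This buys you a proof that never touches the pathwise rate function and therefore needs no mollification. Your approach, by contrast, requires the LDP machinery from the Appendix to be fully in place before the lemma can be closed, whereas the paper's argument uses only the explicit form of $\nu^{\varepsilon}$ and a soft property of the LDP.

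Two smaller points. First, your lower-bound argument is both stronger than needed and not quite justified as stated: you claim that while $\phi_{\ell}$ ascends, the remaining coordinates ``are still near $y_{1}$'' so that $\phi_{\ell}$ has rank $K$; but nothing constrains the other coordinates to stay put. The cleaner observation, which is sufficient, is that $\sum_{j}\rho_{ij}^{\varepsilon}/\alpha_{j}\leq 1/\alpha_{K}$ for \emph{every} configuration (since $\sum_{j}\rho_{ij}^{\varepsilon}=1$ and $1/\alpha_{j}\leq 1/\alpha_{K}$), so each coordinate's effective diffusion coefficient is uniformly bounded above by $\sqrt{2\varepsilon/\alpha_{K}}$, and hence each coordinate's contribution to the action is bounded below by $\alpha_{K}$ times the single-particle action, irrespective of rankings. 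Second, note that \eqref{eqn:defofbs} as written gives $b_{1}=\min_{j\neq1}\hat{Q}(y_{j},y_{1})$, but the quantity actually used in the proof (and what you want) is the minimal escape cost from $y_{1}$, namely $\hat{Q}(y_{1},y_{2})=V(y_{2})$; invoking reversibility does not reconcile the two, since $\hat{Q}(y_{j},y_{1})=0$ when $y_{j}$ is a saddle on $\partial D_{1}$. This appears to be a typographical slip in the paper's definition (the intended meaning is clear from the description ``minimum barrier height of $y_{1}$''), so your arrows should be oriented as in the proof of Lemma \ref{lem:hon}, not as in \eqref{eqn:defofbs}.
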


\begin{proof}
Letting $D_{1}$ be the domain of attraction of $O_{1}$, we define 
\[
Q_{D_{1}}\left(  \boldsymbol{x},\boldsymbol{y}\right)  \doteq\inf\left\{
I_{T}\left(  \phi\right)  :\phi\left(  0\right)  =\boldsymbol{x}%
,\phi\left(  T\right)  =\boldsymbol{y},\phi\left(  t\right)  \in
D_{1}\text{ for all }0\leq t\leq T,T<\infty\right\}  .
\]
Recall that $Q\left(  \boldsymbol{x},\boldsymbol{y}\right)  $ is defined by
\[
Q\left(  \boldsymbol{x},\boldsymbol{y}\right)  \doteq\inf\left\{
I_{T}\left(  \phi\right)  :\phi\left(  0\right)  =\boldsymbol{x}%
,\phi\left(  T\right)  =\boldsymbol{y},T<\infty\text{ }\right\}  .
\]
Now since $O_{1}$ is the only equilibrium point in $D_{1}$, this implies that
\[
h\doteq\min_{i\in L\setminus\{1\}}Q(O_{1},O_{i})\geq\inf_{\boldsymbol{x}%
\in\partial D_{1}}Q_{D_{1}}\left(  O_{1},\boldsymbol{x}\right)  .
\]
Moreover, we can apply \cite[Theorem 4.3, Chapter 4]{frewen2} and (\ref{eqn:U}) to find
\begin{align*}
\inf_{\boldsymbol{x}\in\partial D_{1}}Q_{D_{1}}\left(  O_{1},\boldsymbol{x}%
\right)   &  =-\lim_{\varepsilon\rightarrow0}\varepsilon\log\left(  \frac
{\nu^{\varepsilon}\left(  \partial D_{1}\right)  }{\nu^{\varepsilon}\left(
D_{1}\right)  }\right)  =\inf_{\boldsymbol{x}\in\partial D_{1}}%
U(\boldsymbol{x})-\inf_{\boldsymbol{x}\in D_{1}}U\left(  \boldsymbol{x}\right)
\\
&  =U(O_{2})-U(O_{1})=U(O_{2})=\alpha_{K}V(y_{2})=\alpha_{K}b_{1},
\end{align*}
where $O_{2}\doteq(y_{1},\ldots,y_{1},y_{2})\in\partial D_{1}$ with $y_{2}$
being an unstable equilibrium point such that $b_{1}=\hat{Q}(y_{1}%
,y_{2})=V(y_{2})$. Thus, we have $h\geq\alpha_{K}b_{1}.$ For the other
direction, we use the definitions of $Q_{D_{1}}$ and $Q$, and we apply
\cite[Theorem 4.3, Chapter 4]{frewen2} again to find
\[
h\leq Q\left(  O_{1},O_{2}\right)  \leq Q_{D_{1}}(O_{1},O_{2}%
)=U(O_{2})-U(O_{1})=\alpha_{K}b_{1}.
\]

\end{proof}

Recall that $w\doteq W(O_{1})-\min_{i\in L\setminus\{1\}}W(O_{1}\cup O_{i})$.
We provide an upper bound for $w$ in the next lemma. To state the lemma, we
need some more definitions. Let $\hat{G}(1)$ denote the collection of graphs on $\{y_i\}_{i\in H}$ that
end at $y_{1}$. Let $\hat
{G}_{\text{m}}(1)$ denote the subset of such graphs with the property that for 
every local maximum or saddle point $y$ there is a local 
local minimum $z$ such that $\hat{Q}(y, z)=0$. 
We know that $\hat{G}_{\text{m}}(1)$ is nonempty since it
contains the optimizing $\hat{g}$ in the definition of $\hat{{W}}(y_{1})$
\cite[Lemma 4.3(a), Chapter 6]{frewen2}. Given $\hat{g}\in$ $\hat{G}_{\text{m}%
}(1)$ let $H_{\hat{g}}\subset H\backslash\{1\}$ be the indices which are
starting points, i.e., $k\in H_{\hat{g}}$ means that there is no arrow in the
graph that leads to $y_{k}$. Given $k\in H_{\hat{g}}$, let $C_{\hat{g}%
}(k)$ be the cost along the path $i_{1}=k,i_{2},\ldots,i_{m}=1$ in $\hat{g}$
leading from $k$ to $1$:%
\[
C_{\hat{g}}(k)=\sum_{j=1}^{m-1}\hat{Q}\left(  y_{i_j},y_{i_{j+1}}\right)
.
\]

\begin{lemma}
\label{lem:wbound}$w\leq K\alpha_{K}\min_{\hat{g}\in\hat{G}_{\text{m}}(1)}%
\max_{k\in H_{\hat{g}}}C_{\hat{g}}(k)$.
\end{lemma}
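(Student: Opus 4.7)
The plan is to reduce the inequality to bounding a single quasipotential $Q(O_{j^{\ast}}, O_1)$ for a well-chosen $j^{\ast}$, and then to bound that quasipotential by exhibiting an explicit $K$-stage test path in $M^K$ driven by the graph $\hat{g}$.

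First I would establish the reduction $w \leq \min_{j \neq 1} Q(O_j, O_1)$. Given any $j \neq 1$, take an optimal graph $g^{\ast} \in G(1,j)$ with $\mathrm{cost}(g^{\ast}) = W(O_1 \cup O_j)$ and append the single arrow $O_j \to O_1$ of cost $Q(O_j, O_1)$. The resulting object lies in $G(1)$, and its cost $W(O_1 \cup O_j) + Q(O_j, O_1)$ upper-bounds $W(O_1)$; rearranging and minimizing over $j \neq 1$ gives the claim.

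Next, for fixed $\hat{g} \in \hat{G}_{\mathrm{m}}(1)$ and any leaf $k^{\ast} \in H_{\hat{g}}$, I take $j^{\ast}$ with $O_{j^{\ast}} = (y_{k^{\ast}}, \ldots, y_{k^{\ast}}) \in M^K$ and bound $Q(O_{j^{\ast}}, O_1)$ by a $K$-stage test path. In stage $\ell$ the components indexed by $m < \ell$ are held at $y_1$ and those indexed by $m > \ell$ at $y_{k^{\ast}}$, while the $\ell$-th component is moved from $y_{k^{\ast}}$ to $y_1$ along an optimal single-particle $V$-trajectory that climbs a lowest connecting saddle and then descends to $y_1$. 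The decisive point is that throughout the ascent of each stage the moving component strictly dominates every frozen one in $V$-value; the rearrangement inequality applied to the exponent in \eqref{eqn:defofw} then shows that $w^{\varepsilon}$ concentrates on those permutations which pair the moving component with position $K$ (carrying multiplier $\alpha_K$), so $\rho_{\ell K}^{\varepsilon} \to 1$ and the effective diffusion coefficient of the moving component is $\sqrt{2\varepsilon/\alpha_K}$. The Freidlin--Wentzell action on the ascent is therefore $\alpha_K \hat{Q}(y_{k^{\ast}}, y_1)$, and the descent contributes zero action since it follows the deterministic flow $\dot{X}_{\ell} = -\nabla V(X_{\ell})$. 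Summing over the $K$ stages and using $\hat{Q}(y_{k^{\ast}}, y_1) \leq C_{\hat{g}}(k^{\ast})$ yields $Q(O_{j^{\ast}}, O_1) \leq K\alpha_K C_{\hat{g}}(k^{\ast}) \leq K\alpha_K \max_{k \in H_{\hat{g}}} C_{\hat{g}}(k)$, and minimizing over $\hat{g}$ then completes the proof.

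The hard part is the weight bookkeeping. As the moving component in stage $\ell$ crosses the level $V(y_{k^{\ast}})$ from above during its descent, the weights reorganize: the frozen $y_{k^{\ast}}$-components become the effective hot sites and the moving component's effective temperature drops. However, since the descent is strictly downhill for the single-particle potential $V$, it can be realized by the zero-noise flow and contributes no action regardless of the local noise coefficient there. Making this rigorous reduces to standard Freidlin--Wentzell arguments for diffusions with uniformly bounded noise coefficients, together with the verification that the tied state at the foot of the ascent is a measure-zero event in the path integral that does not affect the variational cost.
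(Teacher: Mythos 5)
Your proposal has two genuine gaps, both of which the paper's proof is carefully structured to avoid.

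\textbf{The reduction step does not follow.} From $W(O_1) \leq W(O_1\cup O_j) + Q(O_j,O_1)$ you get $W(O_1) - W(O_1\cup O_j) \leq Q(O_j,O_1)$ for each $j$. But
\[
w = W(O_1) - \min_{i\neq 1} W(O_1\cup O_i) = \max_{j\neq 1}\bigl[W(O_1)-W(O_1\cup O_j)\bigr],
\]
which is a \emph{max}, not a min, of the quantities you bounded. You therefore only obtain $w \leq Q(O_{i^{\ast}}, O_1)$ for the particular $i^{\ast}$ that minimizes $W(O_1\cup O_i)$ -- an index you do not control and which in general is not a diagonal point $(y_{k^{\ast}},\dots,y_{k^{\ast}})$. ``Minimizing over $j$'' would require the bound to hold for each $j$ simultaneously on the right, which it does not. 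The paper sidesteps this by proving the bound $Q(O_i,O_1)\leq K\alpha_K\max_{k\in H_{\hat g}}C_{\hat g}(k)$ \emph{uniformly in} $i\in L\setminus\{1\}$, so it applies to whichever $i^{\ast}$ happens to realize the minimum; restricting to one chosen $O_{j^{\ast}}$ does not suffice.

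\textbf{The stage-cost estimate $\alpha_K\hat Q(y_{k^{\ast}},y_1)$ is wrong when the single-particle path crosses an intermediate well.} Your assertion that ``throughout the ascent of each stage the moving component strictly dominates every frozen one'' fails as soon as the graph path $y_{k^{\ast}}\to\cdots\to y_1$ descends into a local minimum $y'$ with $V(y')<V(y_{k^{\ast}})$. After that descent the frozen coordinates still parked at $y_{k^{\ast}}$ have \emph{larger} $V$-value than the moving coordinate, so on the ensuing re-ascent from $V(y')$ up to $V(y_{k^{\ast}})$ the moving coordinate is not paired with $\alpha_K$; it receives some $\alpha_j>\alpha_K$ and the FW cost per unit height is $\alpha_j$, not $\alpha_K$. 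The stage cost can therefore strictly exceed both $\alpha_K\hat Q(y_{k^{\ast}},y_1)$ and $\alpha_K C_{\hat g}(k^{\ast})$. The paper's construction is different in precisely the way that fixes this: at every move it relocates the coordinate that is \emph{currently} highest in $V$, and moves it only one graph-step (local minimum to adjacent saddle, or saddle downhill to a local minimum). A one-step uphill move is monotone, so the moving coordinate stays highest throughout and the cost is $U(\text{end})-U(\text{start})=\alpha_K\hat Q(x,z)$; the downhill steps cost zero irrespective of the weights. Each coordinate then accrues total cost at most $\alpha_K\max_k C_{\hat g}(k)$, and summing over $K$ coordinates gives the lemma. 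Interleaving the coordinates in this way -- rather than finishing one before starting the next -- is not a cosmetic choice; it is what keeps the $\alpha_K$ weight attached to the climbing coordinate.
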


\begin{remark}
\label{Rmk:tight}
Note that always $\min_{\hat{g}\in\hat{G}_{\text{m}}(1)}\max_{k\in H_{\hat{g}%
}}C_{\hat{g}}(k)\leq\hat{{W}}(y_{1})$, and that $\min_{\hat{g}\in\hat
{G}_{\text{m}}(1)}\max_{k\in H_{\hat{g}}}C_{\hat{g}}(k)$ can in some cases be
much smaller than $\hat{{W}}(y_{1})$. For example, this is often the case
when $H$ is large but all equilibrium points of $V$ can reach $y_{1}$ while
passing through only a few intermediate equilibrium points. 
The lemma is useful owing to the scaling in $K$ that is obtained,
but unlike the expression for $h$ is not tight.
\end{remark}

\begin{proof}
We will show that for any $i\in L\setminus\{1\}$ and any $\hat{g}\in\hat
{G}_{\text{m}}(1)$, $Q(O_{i},O_{1})\leq\alpha_{K}\max_{k\in H_{\hat{g}}%
}C_{\hat{g}}(k)$. If this is true, then from the definition of $W(O_{1}\cup
O_{i})$ we can construct a graph to use in the definition of $W(O_{1})$ that
gives $W(O_{1})\leq W(O_{1}\cup O_{i})+Q(O_{i},O_{1})$ for any $i\in
L\setminus\{1\}$. Combining these two inequalities with the definition of $w$
in (\ref{eqn:somedefs}) complete the proof.

To prove the upper bound for $Q(O_{i},O_{1})$ we fix a graph $\hat{g}\in
\hat{G}_{\text{m}}(1)$, and note that for any $y_{\ell}$ with $\ell\in
H_{\hat{g}}$, there is a unique sequence of arrows (containing no loop) that
leads from $y_{\ell}$ to $y_{1}$ with cost $C_{\hat{g}}(\ell)$.
Furthermore, we known that in this $\hat{g}$, every local maximum or saddle
point will lead to a local minimum with zero $\hat{Q}$-cost. Using these
facts, we design a route from $O_{i}$ to $O_{1}$ through points from $(\{y_i\}_{i\in H})^K$ 
in the following way.

\begin{itemize}
\item We change only one component at a time.

\item We change the component with the largest $V$-value, and replace it by
the next equilibrium point suggested by the graph $\hat{g}$. If there is more
than one component with the largest $V$-value, then we can move any one of them.

\item Then repeat the process until all the components reach $y_{1}$,
i.e., $O_{i}$ reaches $O_{1}$.
\end{itemize}

Next we analyze the $Q$-cost for each single step. For notational convenience,
suppose without lose of generality that it is the first component that takes
the largest $V$-value. Then we will move from $(x_{1},x_{2}\dots,x_{K})$ to
some $(z_{1},x_{2},\dots,x_{K})$, with $V(x_{1})\geq V(x_{\ell})$ for all
$\ell\neq1$, and $(x_{1}\rightarrow z_{1})\in\hat{g}$. We claim that
$Q((x_{1},x_{2}\dots,x_{K}),(z_{1},x_{2}\dots,x_{K}))$ is always equal to
$\alpha_{K}\hat{Q}(x_{1},z_{1}).$

We first consider the case when $x_{1}$ is a saddle point or a local maximum
of $V$. In this case then we know that $z_{1}$ must be a local minimum of $V$
such that $\hat{Q}(x_{1},z_{1})=0$, so it is easy to see that we can construct
a zero $Q$-cost trajectory 
from $(x_{1},x_{2}\dots,x_{K})$ to $(z_{1},x_{2}\dots,x_{K})$, and this gives
\[
Q((x_{1},x_{2}\dots,x_{K}),(z_{1},x_{2}\dots,x_{K}))=0=\alpha_{K}\hat{Q}%
(x_{1},z_{1}).
\]
On the other hand, if $x_{1}$ is a local minimum of $V$, then $V(z_{1})$ must
be larger than $V(x_{1})$ (which is larger than $V(x_{\ell})$ for all
$\ell\neq1$), and hence according to the definition of $U$
\begin{align*}
Q((x_{1},x_{2}\dots,x_{K}),(z_{1},x_{2}\dots,x_{K}))  &  =U(z_{1},x_{2}%
\dots,x_{K})-U(x_{1},x_{2}\dots,x_{K})\\
&  =\alpha_{K}V(z_{1})-\alpha_{K}V(x_{1})\\
&  =\alpha_{K}\hat{Q}(x_{1},z_{1}).
\end{align*}

As a result, the overall cost for each component to reach $y_{1}$ is not
larger than $\alpha_{K}\max_{k\in H_{\hat{g}}}C_{\hat{g}}(k),$ and because
there are $K$ components in total, we conclude that $Q(O_{i},O_{1})\leq
K\alpha_{K}\max_{k\in H_{\hat{g}}}C_{\hat{g}}(k)$. We then minimize on
$\hat{g}\in\hat{G}_{\text{m}}(1)$.
\end{proof}

\begin{remark}
\label{rem:defofB}
A consequence of Lemmas \ref{lem:hon} and
\ref{lem:wbound} is that if we pick the temperature ratios to be $\boldsymbol{\alpha
}^{\ast}=(1,1/2,\dots,(1/2)^{K-1})$, then $\liminf_{\varepsilon\rightarrow
0}-\varepsilon\log(T^{\varepsilon}\cdot\mathrm{Var}_{x}(\theta_{\text{INS}%
}^{\varepsilon,T^{\varepsilon}}))$ is bounded below by $2V(A)-\left(
1/2\right)  ^{K-1}(V(A)+B)$, where 
$B\doteq b_{1}\vee (K\min_{\hat{g}\in\hat{G}_{\text{m}}(1)}%
\max_{k\in H_{\hat{g}}}C_{\hat{g}}(k))$. 
For fixed $V$, the gap between this value and the best possible  $2V(A)$ decays geometrically in  $K$.
\end{remark}

\subsection{Examples}
\begin{example}
We first consider the situation depicted in Figure \ref{fig:1}. 
If we use INS with two temperatures, i.e. $K=2$ and $1=\alpha_1\geq\alpha_2>0$, then some algebra shows $h=\alpha_2 b_1=4\alpha_2$ and $w=W(O_1)-\min_{i\neq 1}W(O_1\cup O_i) = 3\alpha_2$, and therefore $h>w$. 
The outcome $h>w$ reflects the fact the well containing $y_1$ is the hardest to escape from and also contains the global minimum.
\begin{figure}[h]
    \centering
    \includegraphics[width=0.9\textwidth]{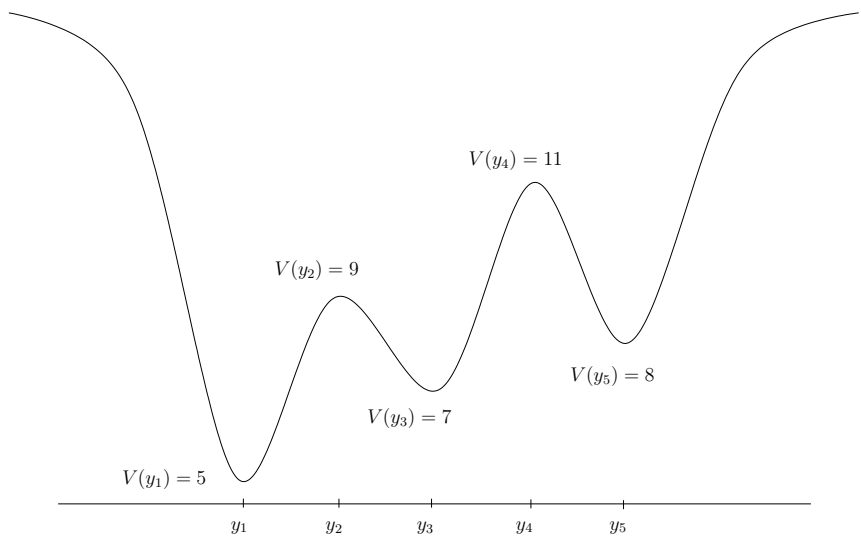}
    \caption{A case with $h>w$}
    \label{fig:1}
\end{figure}
\end{example}

\begin{example}
In this example, we consider the situation depicted in Figure \ref{fig:2}. With the same two temperature setting as in the last example, one finds $h=\alpha_2 b_1=4\alpha_2$ and $w=W(O_1)-\min_{i\neq 1}W(O_1\cup O_i) = 5\alpha_2$, which gives $w>h$. 
Here we see that there is a secondary well from which escape is harder than from that which contains $y_1$.
Moreover, in this case $\min_{\hat{g}\in\hat{G}_{\text{m}}(1)}%
\max_{k\in H_{\hat{g}}}C_{\hat{g}}(k) = \hat{W}(y_1) = 7\alpha_2$, and $K\alpha_K\min_{\hat{g}\in\hat{G}_{\text{m}}(1)}\max_{k\in H_{\hat{g}}}C_{\hat{g}}(k) = 14\alpha_2$ is strictly larger then $w=5\alpha_2$. Thus the bound for $w$ from Lemma \ref{lem:wbound} is not tight, though it is still good enough to show the deviation from optimality decays geometrically in $K$. 
\begin{figure}[h]
    \centering
    \includegraphics[width=0.9\textwidth]{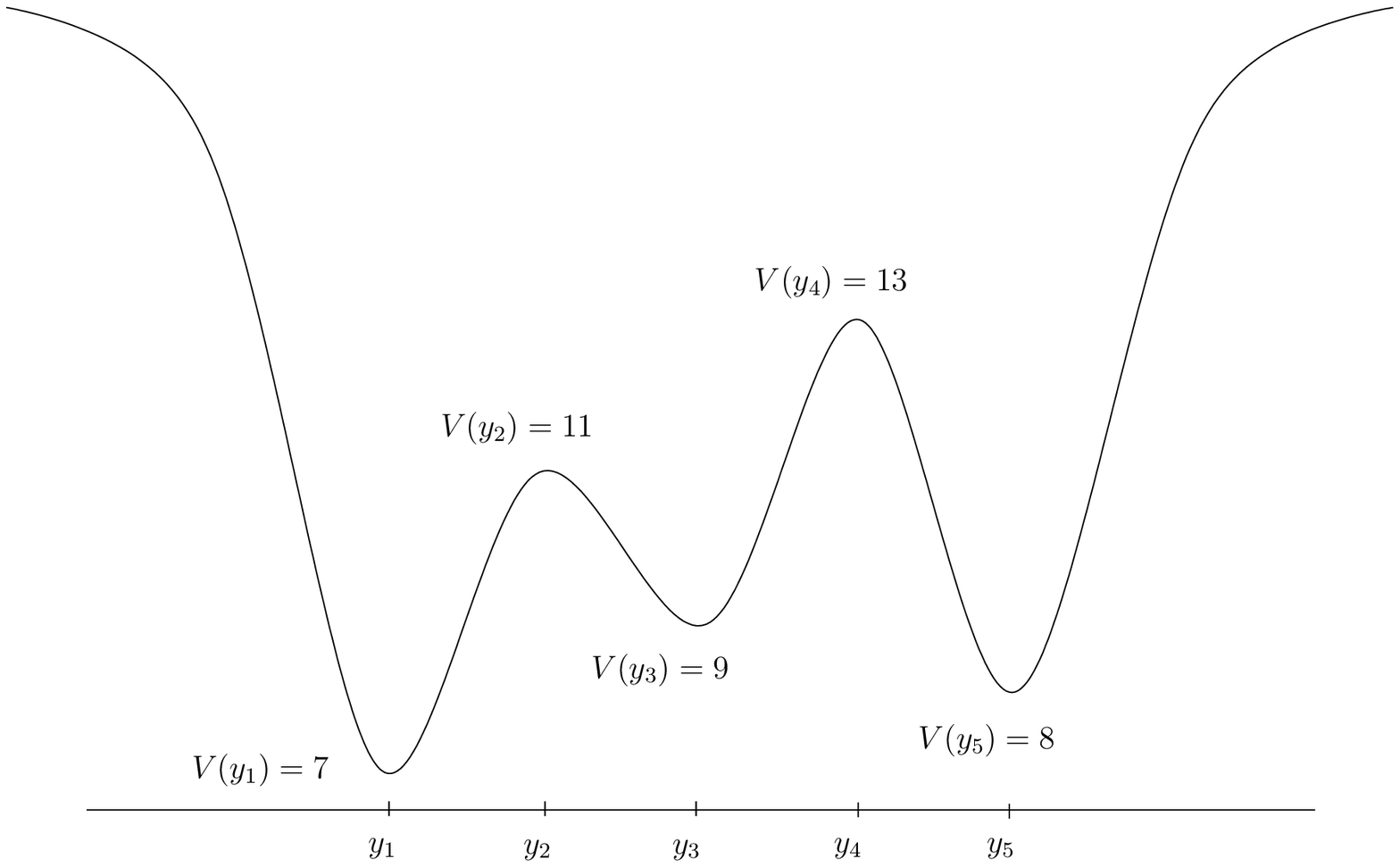}
    \caption{A case with $h<w$}
    \label{fig:2}
\end{figure}

\end{example}

\begin{example}
The last example we consider is a potential $V$ with a unique 
global minimum $y_1$ in the deepest well which is surrounded by 
$N$ collections of wells of the same form as depicted in Figure 
\ref{fig:2}, with $y_1$ common to all collections,
and each collection arranged in a radial direction out from $y_1$.
Let $\{y_i^n,i=1,\ldots,5,n=1,\ldots,N\}$ with $y^n_1=y_1$ denote the critical points of $V$.
Let $\hat g$ be the graph with all arrows pointing in along the radial direction. In this case  $H_{\hat g}$ has $N$ vertices, and with $n$ indexing such a vertex let $C_{\hat g}(n)= V(y^n_4)-V(y^n_5)+V(y^n_2)-V(y^n_3)$.
With this example,
so long as we have a uniform bound on $C_{\hat g}(n)$ there is a bound on $w$ that is independent of $N$.
Note that if there are large barriers between the radial collections then 
we will also have 
$\hat{W}(y_1) = \sum_{1\leq n \leq N}C_{\hat g}(n)$, which in this case will be much larger that $\max_{1\leq n \leq N}C_{\hat g}(n)$,
a situation noted in \ref{Rmk:tight}.
\end{example}

\section{Appendix}

The results of \cite{dupwu} use the large deviation principle for a small
noise diffusion process to characterize large deviation properties of the
variance of the empirical measure, in the limit as the time horizon tends to
infinity and the strength of the noise tends to zero. One use of the rate
function on path space is to determine probabilities of transitions between
equilibrium points of the noiseless system. As noted previously for the INS model this
is not needed, in that the known form of the stationary distribution hands us
this information directly. Because of this, all that is needed is that the LDP
holds with some rate function that is uniform with respect to initial
conditions, and certain bounds on the rate function.

One bound that is needed is an upper bound on the cost to go from any point
$\boldsymbol{x}$ to any nearby point $\boldsymbol{y}$, i.e., $\inf
\{I_{T}(\phi):\phi(0)=\boldsymbol{x},\phi(T)=\boldsymbol{y},T\in(0,\infty
)\}$, which shows that this cost can be made small by making the distance
between $\boldsymbol{x}$ and $\boldsymbol{y}$ small (a controllability type
condition). Such a bound follows easily from the non-degeneracy of the noise
and boundedness of $\nabla V$ by making comparison with the case of Brownian motion.

The other bound needed is used to show that for many calculations what happens
away from neighborhoods of the equilibrium points is not so important, in that the
process spends very little time (in a relative sense) any place but in the
union of these neighborhoods. For this, the key property of the rate function
is a result that shows that if $\delta>0$ then all zero cost trajectories
(i.e., paths $\phi$ such that $I_{T}(\phi)=0$ for all $T\in(0,\infty)$) that
start outside the union of the $\delta$-neighborhoods of the equilibrium points must
reach that set in a time that is uniformly bounded over all initial conditions
and paths.

Thus to apply the results of \cite{dupwu} two things need to be shown: an LDP
holds that is uniform with respect to initial conditions, and that if
$I_{T}(\phi)$ is the rate function for this LDP then the stability property
for zero cost paths just mentioned is true. In this section we sketch how both of these can be shown for the INS model.

\subsection{Properties of zero cost trajectories}

A condition that is sufficient to show that the time spent away from $\delta
$-neighborhoods of the equilibrium points is the following.

\begin{enumerate}
\item There is a measurable function $\bar{L}:M^{K}\times(\mathbb{R}^{d}%
)^{K}\rightarrow\lbrack0,\infty)$ that is uniformly bounded on each compact
subset, such that for all absolutely continuous $\psi\in C([0,T]:M^{K})$ $\,$,
the rate function for the INS model discussed in the next section of the
Appendix satisfies
\[
\int_{0}^{T}\bar{L}(\psi,\dot{\psi})ds\leq I_{T}(\psi),
\]
and in all other cases $I_{T}(\psi)=\infty$. 

\item For each $\delta>0$ there is $f:[0,\infty)\rightarrow\lbrack0,\infty)$
that satisfies $f(t)\rightarrow\infty$ as $t\rightarrow\infty$, and if
$\psi:[0,\infty)\rightarrow M^{K}$ is absolutely and if $\psi(t)$ avoids the
$\delta$-neighborhoods of all the equilibrium points $\{\theta_{i},i\in H\}^{K}$,
then
\begin{equation}
\int_{0}^{T}\bar{L}(\psi,\dot{\psi})ds\geq f(T). \label{eqn:RFLB}%
\end{equation}

\end{enumerate}

Given that an LDP holds with rate function $I_{T}(\phi)$, it follows from the
general large deviation upper bound proved in \cite{dupellwei} that
$I_{T}(\phi)\geq J_{T}(\phi)$, with $J_{T}(\phi)$ giving the upper bound
rate and with $J_{T}(\phi)=\int_{0}^{T}\bar{L}(\phi,\dot{\phi})ds$ of the
following form. For each point $\boldsymbol{x}\in M^{K}$ there is a finite
collection of functions
\[
H_{j}(\boldsymbol{x},\boldsymbol{\gamma})\doteq\sum_{k=1}^{K}\left[
\left\langle -\nabla V(x_{k}),\gamma_{k}\right\rangle +c_{k}^{j}\left\Vert
\gamma_{k}\right\Vert ^{2}\right]  =\sum_{k=1}^{K}\left\langle -\nabla
V(x_{k}),\gamma_{k}\right\rangle +\bar{H}_{j}(\boldsymbol{x}%
,\boldsymbol{\gamma}),
\]
$j=1,\ldots,J$, where each $\gamma_{k}\in$ $\mathbb{R}^{d}$ and for each $j$ the $c_{k}^{j}$
take distinct values from $\{\alpha_{1}^{-1},\ldots,\alpha_{K}^{-1}\}$, and
the equality defines $\bar{H}_{j}(\boldsymbol{x},\boldsymbol{\gamma})$. Note
that each $\bar{H}_{j}(\boldsymbol{x},\boldsymbol{\gamma})$ is quadratic and
positive definite (i.e., greater than zero if $\boldsymbol{\gamma}%
\neq\boldsymbol{0}$). For $\boldsymbol{\beta}=(\beta_{1},\ldots,\beta_{K})$
with each $\beta_{k}$ in the tangent space to $M$ at $x_{k}$ (the only values
where $\bar{L}(\boldsymbol{x},\boldsymbol{\beta})$ will be finite), we then
have that
\begin{align*}
\bar{L}(\boldsymbol{x},\boldsymbol{\beta})  &  =\sup_{\boldsymbol{\{\gamma
}_{k}\boldsymbol{\}}}\left[  \sum_{k=1}^{K}\left\langle \beta_{k},\gamma
_{k}\right\rangle +\sum_{k=1}^{K}\left\langle \nabla V(x_{k}),\gamma
_{k}\right\rangle -\vee_{j=1}^{J}\bar{H}_{j}(\boldsymbol{x},\boldsymbol{\gamma
})\right] \\
&  =\sup_{\boldsymbol{\{\gamma}_{k}\boldsymbol{\}}}\left[  \sum_{k=1}%
^{K}\left\langle (\beta_{k}+\nabla V(x_{k})),\gamma_{k}\right\rangle
-\vee_{j=1}^{J}\bar{H}_{j}(\boldsymbol{x},\boldsymbol{\gamma})\right]
\end{align*}
From standard theory of the Legendre-Fenchel transform, $\bar{L}%
(\boldsymbol{x},\boldsymbol{\beta})\geq0$ with equality if and only if
$\boldsymbol{\beta}+\boldsymbol{v}$ is in the set of subdifferentials of
$\vee_{j=1}^{J}\bar{H}_{j}(\boldsymbol{x},\boldsymbol{\gamma})$ in the
$\boldsymbol{\gamma}$ variable at $\boldsymbol{\gamma}=\boldsymbol{0}$, with
$\boldsymbol{v}$ being the vector of components $\nabla V(x_{k})$. Since the
subdifferentials of $\vee_{j=1}^{J}\bar{H}_{j}(\boldsymbol{x}%
,\boldsymbol{\gamma})$ at $\boldsymbol{\gamma}=\boldsymbol{0}$ is precisely
$\{\boldsymbol{0}\}$, we get that $\bar{L}(\phi,\dot{\phi})=0$ if and only if
each component of $\phi=(\phi_{1},\phi_{2},\ldots,\phi_{K})$ satisfies
$\dot{\phi}_{k}=-\nabla V(\phi_{k})$. Since we assume there are only finitely
many equilibrium points of $V$ it must be true that each component reaches the
$\delta$-neighborhood of one of the equilibrium points in finite time. The reference
\cite{dupellwei} also proves that $J_{T}(\phi)$ has compact level sets. Since
the equilibrium points of the combined system are just $\{\theta_{i},i\in H\}^{K}$,
the claimed property (\ref{eqn:RFLB}) follows from standard calculations (see,
e.g., \cite[Lemma 2.2, Chapter 4]{frewen2}).

\subsection{Uniform LDP on path space}

The second issue is more complicated. We want to argue the following:

\begin{itemize}
\item Let $X_{x}^{\varepsilon}$ denote the solution to the INS dynamics
(\ref{eqn:dym_INS}) with initial condition $x\in M^{K}$. Fix any
$T\in(0,\infty)$. Then $\{X_{x}^{\varepsilon}\}$ satisfies an LDP on
$C([0,T]:M^{K})$ with rate function $I_{T}$ that is uniform in $x\in M^{K}$
\cite[Section 1.2]{buddup4}.
\end{itemize}

Owing to the discontinuities in the diffusion coefficient as $\varepsilon
\rightarrow0$, the INS model falls into what are called processes with
\textquotedblleft discontinuous statistics\textquotedblright\ in the large
deviation literature. There are models with discontinuous statistics for which
very explicit expressions for the rate function are possible, but there are
also many examples where, although the existence of an LDP can be established,
a precise characterization is difficult. The INS model falls into the latter
category. We will describe in some detail one way to show the existence of an
LDP for the INS model. To explain the main points we consider the particular
case of an asymmetric two well model in dimension one, with $K=2$. An example
is the Franz potential with parameter $\theta$ depicted in Figure
\ref{fig:b1}:%
\[
V(x)=V(x;\theta)=\frac{3x^{4}-4(\theta-1)x^{3}-6\theta x^{2}}{2\theta
+1}+1,\quad x\in\mathbb{R}.
\]
For every $\theta\in\lbrack0,1]$, $V(\cdot;\theta)$ has a fixed local minimum
of zero at $x_{L}=-1$, another local minimum at $x_{R}=\theta$, and a fixed
barrier of height $1$ at $x=0$. Taking $\theta=1$ produces a symmetric two
well potential and $\theta=0$ gives a single well. As before, one should
imagine that the potential has been extended in a periodic fashion while
retaining this two local minimum structure. The symmetrized potential, which
identifies the stationary distribution for the INS dynamics, is plotted in
Figure \ref{fig:b2}. This potential has a global minimum with value $0$ at
$(-1,-1)$, local minima at $(-1,.85)$ and $(.85,-1)$, and a highest local
minimum at $(.85,.85)$.

In Figure \ref{fig:a2} we plot the regions in the pair of variables where the
diffusion coefficients for the symmetrized dynamics converge to a
discontinuous function. Away from these regions the $\rho_{ij}^{\varepsilon
}(\boldsymbol{x};\boldsymbol{\alpha})$ converge uniformly to a constant, with
limiting values $1$ and $0$.

\begin{figure}[h]
\centering
\includegraphics[width=0.9\textwidth]{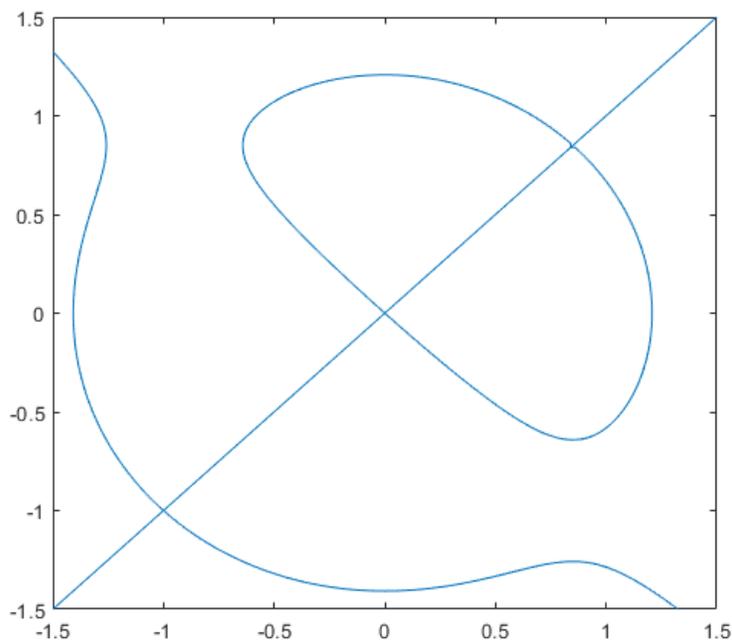} \caption{Locations where
limits of weights are discontinuous}%
\label{fig:a2}%
\end{figure}


Figure \ref{fig:b1} superimposes the locations of the critical points of the
symmetrized potential on the plat of the discontinuity region. Points
$O_{1},O_{3},O_{7}$, and $O_{9}$ are local minima with $O_{1}$ the global
minimum. Points $O_{2},O_{4},O_{6}$, and $O_{8}$ are saddle points, and
$O_{5}$ is a local maximum.

To prove the LDP one can adapt the theory presented in \cite{dupell2}, which
was motivated by problems from queueing theory and hence focuses on continuous
time processes that take values in a lattice, to deal with the diffusion
models of INS. To do so one will want some regularity assumptions on the set
$D$ of discontinuities of the functions $\lim_{\varepsilon\rightarrow
0}w^{\varepsilon}(\boldsymbol{x}_{\sigma},\boldsymbol{\alpha})$, which will
impose conditions on $V$. These discontinuities occur when two or more
$V(x_{i})$ tie, and we will want that given any point in $D$ there is a smooth
change of variable so that in an open neighborhood of the point $D$ can be
mapped to a set consisting of the union of a finite set of hyperplanes of
fixed dimension. These are mild conditions, imposing smoothness on $V$ and
ruling out sets of positive Lebesgue measure where $V$ is a constant. When
such conditions do not hold the local structure of $D$ can be more
complicated, and a more involved argument would be needed.

The method of \cite{dupell2} uses two steps to prove the LDP. One step is to
show, using the Markov property, that it is sufficient to prove large
deviation estimates of the following general form, rephrased for a continuous
state model. We suppose for simplicity of terminology that the state space is
$(\mathbb{R}^{d})^{K}$ rather than $M^{K}$.

We consider the large deviation properties of increments of the process of the
form
\[
p^{\varepsilon}(z,\Delta;\beta,\eta)\doteq P\left(  \left.  \sup_{s\in
\lbrack0,\Delta]}\left\Vert X^{\varepsilon}(s)-s\beta\right\Vert
<\eta\right\vert X^{\varepsilon}(0)=z\right)  .
\]
To establish an LDP on path space, it is sufficient to show the following. For
each $y\in(\mathbb{R}^{d})^{K}$ there is an affine space $\Gamma_{y}$ with
dimension strictly smaller than that of $(\mathbb{R}^{d})^{K}$ and a lower
semicontinuous function $L:(\mathbb{R}^{d})^{K}\times\Gamma_{y}\rightarrow
\lbrack0,\infty)$, with the property that
for each fixed $y$ the map $\beta\rightarrow L(y,\beta)$ is convex, and such
that
\begin{align}
&  \lim_{\Delta\rightarrow0}\frac{1}{\Delta}\lim_{\eta\rightarrow0}%
\lim_{\delta\rightarrow0}\liminf_{\varepsilon\rightarrow0}\inf_{\{z:\left\Vert
z-y\right\Vert \leq\delta\}}\left(  -\varepsilon\log p^{\varepsilon}%
(z,\Delta;\beta,\eta)\right) \nonumber\\
&  =\lim_{\Delta\rightarrow0}\frac{1}{\Delta}\lim_{\eta\rightarrow0}%
\lim_{\delta\rightarrow0}\limsup_{\varepsilon\rightarrow0}\sup_{\{z:\left\Vert
z-y\right\Vert \leq\delta\}}\left(  -\varepsilon\log p^{\varepsilon}%
(z,\Delta;\beta,\eta)\right) \nonumber\\
&  =L(y,\beta). \label{eqn:limits}%
\end{align}
The set $\Gamma_y$ is a local approximation to the directions in which the
dynamics of the process are in some sense uniformly (in $\varepsilon$)
continuous, and it is in directions orthogonal to $\Gamma_y$ that there are
rapidly changing or perhaps even discontinuous behaviors. We illustrate the
role of $\Gamma_y$ through the two dimensional example. The definition of
$L(y,\beta)$ for $\beta\notin\Gamma_y$ is unimportant when $\Gamma
_y\neq(\mathbb{R}^d)^K$, since the Lebesgue measure of the times $t$ where an
absolutely continuous function $\psi:[0,T]\rightarrow(\mathbb{R}^d)^K$ lies on
a hyperplane of dimension $dK-1$ and at the same time $\dot{\psi}$ is not on
that plane is zero (i.e., $\dot{\psi}(s)\in\Gamma_{\psi(s)}$ a.s.). 

Given the
estimates of (\ref{eqn:limits}) and mild regularity properties of $%
L(y,\beta)$, in the second step \cite{dupell2} shows how to combine these estimates for
increments using the Markov property to obtain a uniform LDP for
$\{X^\varepsilon\}$ on path space. (There is an error in the proof of the LDP
upper bound in \cite{dupell2} that was pointed out and corrected in \cite{ign}.)

To connect to the INS model, we consider the two temperature two well model
discussed earlier, and for which the discontinuity set $D$ is depicted in
Figure \ref{fig:a2}. There are qualitatively three types of points in this
figure: (a) continuity points, (b) points $y$ such that in a small
neighborhood of $y$ the set $D$ is smooth and one dimensional, and (c) points
$y$ such that in a small neighborhood of $y$ the set $D$ is the intersection
of two smooth, one dimensional sets. For points of type (a) we can easily show
(\ref{eqn:limits}) for $\Gamma_{y}=\mathbb{R}^{2}$ using many different
methods and with an explicit expression for $L(y,\beta)$. For points of type
(b) $\Gamma_{y}$ is the one dimensional tangent space to $D$ at $y$. Here we
do not attempt to explicitly identify $L(y,\beta)$, and the argument to
establish the existence of the limit in (\ref{eqn:limits}) uses a monotonicity
argument, a method that allows existence of limits to be shown without their
identification. For the last class of points of type (c) $\Gamma_{y}=\{0\}$.

We will describe how to prove the existence of the limits in each of the three
cases mentioned above. We recall that the INS process model is given by the solution to%
\[
\left\{
\begin{array}
[c]{l}%
dX_{1}^{\varepsilon}=-\nabla V(X_{1}^{\varepsilon})dt+\sqrt{\varepsilon}%
\sqrt{2\rho^{\varepsilon,\alpha}(X_{1}^{\varepsilon},X_{2}^{\varepsilon
})+2\rho^{\varepsilon,\alpha}(X_{2}^{\varepsilon},X_{1}^{\varepsilon})/\alpha
}dW_{1}\\
dX_{2}^{\varepsilon}=-\nabla V(X_{2}^{\varepsilon})dt+\sqrt{\varepsilon}%
\sqrt{2\rho^{\varepsilon,\alpha}(X_{1}^{\varepsilon},X_{2}^{\varepsilon
})/\alpha+2\rho^{\varepsilon,\alpha}(X_{2}^{\varepsilon},X_{1}^{\varepsilon}%
)}dW_{2}%
\end{array}
,\right.
\]
where $\alpha\in(0,1)$ and
\begin{equation}
\rho^{\varepsilon,\alpha}(x_{1},x_{2})=\frac{e^{-\frac{1}{\varepsilon}\left[
V(x_{1})+\alpha V(x_{2})\right]  }}{e^{-\frac{1}{\varepsilon}\left[
V(x_{1})+\alpha V(x_{2})\right]  }+e^{-\frac{1}{\varepsilon}\left[  \alpha
V(x_{1})+V(x_{2})\right]  }}. \label{eqn:true_rho}%
\end{equation}
Recall also that $D$ consists of points $(x_{1},x_{2})$ such that
$V(x_{1})=V(x_{2})$, and so if not in $D$ then
\[
\left[  V(x_{1})+\alpha V(x_{2})\right]  \neq\left[  \alpha V(x_{1}%
)+V(x_{2})\right]  .
\]

\subsubsection{$y\notin D$}

In this case as $\varepsilon\rightarrow0$ we have $\rho^{\varepsilon,\alpha
}(x_{1},x_{2})\rightarrow0$ or $1$ uniformly in a neighborhood of $y$. Suppose
that in fact the limit is $1$. Then by standard large deviation theory and
elementary martingale bounds the large deviation limits are the same as those
of the system
\[
\left\{
\begin{array}
[c]{l}%
dX_{1}^{\varepsilon}=-\nabla V(X_{1}^{\varepsilon})dt+\sqrt{\varepsilon}%
\sqrt{2\ }dW_{1}\\
dX_{2}^{\varepsilon}=-\nabla V(X_{2}^{\varepsilon})dt+\sqrt{\varepsilon}%
\sqrt{2/\alpha}dW_{2}%
\end{array}
,\right.
\]
i.e., (\ref{eqn:limits}) holds with
\[
L(y,\beta)=\frac{1}{4}\left[  \left(  \beta_{1}+\nabla V(y_{1})\right)
^{2}+\alpha\left(  \beta_{2}+\nabla V(y_{2})\right)  ^{2}\right]  .
\]
The analogous result holds when $\rho^{\varepsilon,\alpha}(x_{1}%
,x_{2})\rightarrow0$.

\subsubsection{$y\in D$ and locally $D$ is a smooth $1$-dimensional manifold}

We can make a smooth change of variable to \textquotedblleft
flatten\textquotedblright\ $D$ and also replace $\nabla V$ as it appears in
the drift by $(\nabla V(y_{1}),\nabla V(y_{2}))$, and $V$ as it appears in
$\rho^{\varepsilon,\alpha}(x_{1},x_{2})$ by $(V(y_{1})+(x_{1}-y_{1})\nabla
V(y_{1}),V(y_{2})+(x_{2}-y_{2})\nabla V(y_{2}))$.  The reason such
localization is relevant is because of the limit on $\Delta$ in
(\ref{eqn:limits}). This can be justified by using comparison controls to bound the differences in optimal cost under the two sets of dynamics. For notational simplicity let $b=(\nabla V(y_{1}),\nabla V(y_{2}))$.
To avoid degeneracy we will assume $b\neq0$. (If $b=0$ then the same arguments
we use below to justify the replacement of $\nabla V$ by its affine
approximation can be used to reduce to the case of $y\notin D$.)

One can check that if $b\neq0$ then $D$ is the line orthogonal to
$(-b_{1},b_{2})$. Using that $V(y_{1})=V(y_{2})$ we find
\begin{equation}
\rho^{\varepsilon,\alpha}(x_{1},x_{2})=\frac{e^{-\frac{1}{\varepsilon
}\left\langle (x-y),(b_{1},\alpha b_{2})\right\rangle }}{e^{-\frac
{1}{\varepsilon}\left\langle (x-y),(b_{1},\alpha b_{2})\right\rangle
}+e^{-\frac{1}{\varepsilon}\left\langle (x-y),(\alpha b_{1},b_{2}%
)\right\rangle }}. \label{eqn:linear_rho}%
\end{equation}
In terms of the natural coordinates defined by $(g_{1},g_{2})=\left(
-x_{1}b_{1}+x_{2}b_{2},x_{1}b_{2}+x_{2}b_{1}\right)  /\left\Vert b\right\Vert
^{2}$ we have
\begin{align*}
dG_{1}^{\varepsilon}=  &  \frac{1}{\left\Vert b\right\Vert ^{2}}\left(
b_{1}^{2}dt-b_{1}\sqrt{\varepsilon}\sqrt{2\bar{\rho}(G_{1}^{\varepsilon}/\varepsilon)+2\bar{\rho}%
(-G_{1}^{\varepsilon}/\varepsilon)/\alpha}dW_{1}\right. \\
&  \left.  \mbox{}-b_{2}^{2}dt+b_{2}\sqrt{\varepsilon}\sqrt{2\bar{\rho
}(G_{1}^{\varepsilon}/\varepsilon)/\alpha+2\bar{\rho
}(-G_{1}^{\varepsilon}/\varepsilon)}dW_{2}\right)
\end{align*}
and%
\begin{align*}
dG_{2}^{\varepsilon}=  &  \frac{1}{\left\Vert b\right\Vert ^{2}}\left(
-b_{2}b_{1}dt+b_{2}\sqrt{\varepsilon}\sqrt{2\bar{\rho}(G_{1}^{\varepsilon}/\varepsilon)+2\bar{\rho}
(-G_{1}^{\varepsilon}/\varepsilon)/\alpha}dW_{1}\right. \\
&  \left.  \mbox{}-b_{2}b_{1}dt+b_{1}\sqrt{\varepsilon}\sqrt{2\bar{\rho
}(G_{1}^{\varepsilon}/\varepsilon)/\alpha+2\bar{\rho
}(-G_{1}^{\varepsilon}/\varepsilon)}dW_{2}\right)  ,
\end{align*}
where $\bar{\rho}(g_{1})=e^{-g_{1}C}/(e^{g_{1}C}+e^{-g_{1}C})$ and $C=$
$(1-\alpha)\left[  b_{1}^{2}+b_{2}^{2}\right]  >0$.

To simplify notation we write this SDE as
\[
dG^{\varepsilon}=\bar{B}dt+\sqrt{\varepsilon}C^{\varepsilon}(G_{1}%
^{\varepsilon})dW
\]
where the diffusion matrix $C^1(g)$ is uniformly nondegenerate and  
can be written in terms of $\bar{\rho}(g/\varepsilon)$
and  $\bar{\rho}(-g/\varepsilon)$.
Note that the process depends smoothly on $G_{2}^{\varepsilon}$ (in fact owing
to the linearization it does not depend on $G_{2}^{\varepsilon}$ at all), and
the diffusion coefficient is discontinuous in $G_{1}^{\varepsilon}$ in the
limit $\varepsilon\rightarrow0$. For each $\varepsilon>0$ this SDE has a
strong solution that is unique in the strong sense. A final modification that
will ease the analysis and which is also justified by using comparison controls is to perturb $\bar{\rho}(g)$ slightly (with a controllable
change in the cost by making $\Gamma$ large), so that
\[
\bar{\rho}(g)=1\text{ for }g\geq\Gamma\text{ and }\bar{\rho}(g)=-1\text{ for }g\leq
-\Gamma.
\]

It will be enough to show that for any $y$ and $\beta=(0,\beta_{2})$ there is $L(y,\beta
)\in\lbrack0,\infty)$ such that
\begin{align}
L(y,\beta)=  &  \lim_{\eta\rightarrow0}\lim_{\delta\rightarrow0}\liminf
_{\varepsilon\rightarrow0}\inf_{\{z:\left\Vert z-y\right\Vert \leq\delta
\}}\left(  -\varepsilon\log p^{\varepsilon}(z,1;\beta,\eta)\right)
\label{eqn:limits2}\\
&  =\lim_{\eta\rightarrow0}\lim_{\delta\rightarrow0}\limsup_{\varepsilon
\rightarrow0}\sup_{\{z:\left\Vert z-y\right\Vert \leq\delta\}}\left(
-\varepsilon\log p^{\varepsilon}(z,1;\beta,\eta)\right)  .\nonumber
\end{align}
We prove (\ref{eqn:limits2}) by using stochastic control arguments and
quasistationary distributions. To be precise, we consider the controlled
system%
\[
d\bar{G}^{\varepsilon}=\bar{B}dt+C^{\varepsilon}(\bar{G}_{1}^{\varepsilon
})u^{\varepsilon}dt+\sqrt{\varepsilon}C^{\varepsilon}(\bar{G}_{1}%
^{\varepsilon})dW
\]
where $u^{\varepsilon}=(u_{1}^{\varepsilon},u_{2}^{\varepsilon})$ is any
progressively measurable, square integrable control. We make one last
elementary change, which is to absorb $\beta_{2}$ into $\bar{B}$. This can be
done since $C^{\varepsilon}$ does not depend on $g_{2}$. Let
\[
\tau^{\varepsilon}\doteq\inf\{s\geq0:\left\Vert \bar{G}^{\varepsilon
}(s)\right\Vert \geq\eta\}.
\]
Then we have the representation \cite{boudup}, \cite[Chapter 3]{buddup4}
\begin{align*}
V^{\varepsilon}(g;\eta)  &  \doteq-\varepsilon\log p^{\varepsilon}%
(y,1;\beta,\eta)\\
&  =\inf_{u^{\varepsilon}}E\left[  \left.  \frac{1}{2}\int_{0}^{1}\left\Vert
u^{\varepsilon}(s)\right\Vert ^{2}ds+\infty1_{\{\tau^{\varepsilon}\leq
1\}}\right\vert \bar{G}^{\varepsilon}(0)=g\right]  .
\end{align*}

We will need to show a type of uniform (in $\varepsilon$) continuity of
$V^{\varepsilon}(g;\eta)$ in the neighborhood $B_{\delta}(0)\doteq
\{y:\left\Vert y\right\Vert \leq\delta\}$ as described below. By a time change
and scaling properties, we can relate $V^{\varepsilon}(g;\eta)$ to a control
problem on the set $B_{1/\varepsilon}(0)$ over the time interval
$[0,1/\eta\varepsilon]$, and the dynamics%
\[
d\bar{G}=\bar{B}dt+C^{1}(\bar{G}_{1})udt+C^{1}(\bar{G}_{1})dW
\]
and the same running cost and time averaged costs, but requiring no exit
before $1/\eta\varepsilon$. If $\bar{V}^{\varepsilon}(g;1/\eta\varepsilon)$ is
the value function for this problem, then
\[
V^{\varepsilon}(g;\eta)=\bar{V}^{\varepsilon}(g/\varepsilon;1/\eta
\varepsilon),
\]
so we want a uniformity of $\bar{V}^{\varepsilon}(y;1/\eta\varepsilon)$ for
$y$ distance $\delta/\varepsilon$ from the origin.

Owing to the fact a limit $\eta\rightarrow0$, it is natural to relate $\bar
{V}^{\varepsilon}(g;1/\eta\varepsilon)$ to an ergodic control problem. For
$M\in(0,\infty)$ let $\lambda^{M}$ be the minimal cost for the ergodic control
problem when considered with these $\varepsilon=1$ dynamics and which
constrains the process to $B_{M}(0)$ with minimal cost per unit time. This
ergodic control problem is closely related to the problem of existence of a
quasistationary distribution (QSD) when the original dynamics are constrained
to $B_{M}(0)$, with the ergodic cost equal to the decay rate under the QSD,
and the QSD itself is the stationary distribution under the optimal ergodic
control. This is proved by a verification argument when a classical sense
solution to the HJB equation exists. The control problem is also related to
the existence of suitable solutions to an eigenvalue problem \cite{bernirvar}. The
required existence holds in the present setting owing to the regularity of the
boundary and smoothness and nondegeneracy of the dynamics \cite{rab}.

By the use of comparison controls it is easy to see that $\lambda^{M}$ is
nonincreasing in $M$,
\[
\lambda^{M}\downarrow\lambda^{\ast},
\]
where $\lambda^{\ast}>0$ if and only if $\bar{B}\neq0$ (note that for the ergodic control
problem we send $T\rightarrow\infty$ first). (In fact the ergodic cost is more
generally monotone in that a larger set will correspond to a smaller cost, and
hence the shape of the domain, a ball here, is not important.)  It
is easy to see that $\lambda^{\ast}$ is finite. We outline why
\[
\lim_{\eta\rightarrow0}\lim_{\delta\rightarrow0}\liminf_{\varepsilon
\rightarrow0}\inf_{\left\Vert g\right\Vert \leq\delta}\bar{V}^{\varepsilon
}(g/\varepsilon;1/\eta\varepsilon)=\lim_{\eta\rightarrow0}\lim_{\delta
\rightarrow0}\limsup_{\varepsilon\rightarrow0}\sup_{\left\Vert g\right\Vert
\leq\delta}\bar{V}^{\varepsilon}(g/\varepsilon;1/\eta\varepsilon
)=\lambda^{\ast}%
\]
is valid.

To prove the upper bound, one would argue as follows. Fix $M<\infty$. Owing to
the nondegeneracy, on an interval of the form $[0,\delta/\varepsilon\eta]$ we
can drive the process from starting points within $\delta/\varepsilon\eta$ of
zero to $B_{M/2}(0)$ with a cost of size (when averaged over the time interval
$1/\varepsilon\eta$) of size $\delta$. After this we can apply the optimal
control for the $\lambda^{M}$ problem. During the second interval of the form
$[\delta/\varepsilon\eta,1]$, ergodicity on the fixed compact set $B_{M}(0)$
gives a cost of the form $(\lambda^{M}+\delta)(1-\delta)$. One then takes
limits in the indicated order and then sends $M\rightarrow\infty$.

For the lower bound we will need to partition into cases, depending on what
happens with the $\lambda^{M}$. It is convenient here to use $R_{M}%
(0)\doteq\{(g_{1},g_{2}):\left\vert g_{1}\right\vert \vee\left\vert
g_{2}\right\vert <M\}$ rather than $B_{M}(0)$, which is possible due to
monotonicity properties mentioned previously. Let $\mu_{M}$ be the stationary
distribution under the optimal ergodic control. Suppose that for some sequence
$M_{i}\rightarrow\infty$
\[
\lim_{i\rightarrow\infty}\mu_{M_{i}}\left\{  (g_{1},g_{2}):-\Gamma
<g_{1}<\Gamma\right\}  >0.
\]
Then the optimally controlled process must return to this set repeatedly.
(When this is not the case then process will run off to $\pm\infty$ in the
$g_{1}$ direction, and this case is handled with a simpler argument.) In
this case the minimizing points of the cost potential $W^{M_{i}}(y)$ will be
uniformly bounded in $i$ (due to the need to return to $\left\{  (g_{1}%
,g_{2}):-\Gamma<g_{1}<\Gamma\right\}  $), and using comparison controls on any
fixed compact set we will have uniform bounds on the Lipschitz constant of
$W^{M_{i}}(g)$ for all large enough $i$. Hence we can pass to the limit
\[
W^{\ast}(g)=\lim_{i\rightarrow\infty}W^{M_{i}}(g).
\]
We claim that $W^{\ast}$ will satisfy the limit HJB (see \cite{agm} for properties of $\exp - W^{\ast}$ for special cases) and
\[
W^{\ast}(g)\leq\kappa\left\Vert g\right\Vert +K
\]
for some $\kappa,K<\infty$ (in fact $W^{\ast}(g)$ will be independent of
$g_{2}$). 

If the lower bound is \textit{not} true, then we know there is $a>0$ and
sequences $\eta_{j}\rightarrow0$, $\delta_{j}\rightarrow0$ with $\delta
_{j}/\eta_{j}\rightarrow0$, $g_{j}$ with $\left\Vert g_{j}\right\Vert
\leq\delta_{j}$ and $\varepsilon_{j}\rightarrow0$ such that
\begin{equation}
\bar{V}^{\varepsilon_{j}}(g_{j}/\varepsilon_{j};1/\eta_{j}\varepsilon_{j}%
)\leq\lambda^{\ast}-a \label{eqn:abound}%
\end{equation}
for all large enough $j$.

We use that $W^{\ast}(y)$ satisfies
\begin{align*}
\lambda^{\ast} &  =\left\langle DW^{\ast}(g),\bar{B}\right\rangle -\frac{1}%
{2}\left\Vert (C^{1})^T(g_{1})DW^{\ast}(g)\right\Vert ^{2}+\frac{1}{2}%
\text{tr}\left[  A(g_{1})D^{2}W^{\ast}(g)\right]  \\
&  \leq\left\langle DW^{\ast}(g),\bar{B}+C^{1}(g_{1})u\right\rangle +\frac
{1}{2}\left\Vert u\right\Vert ^{2}+\frac{1}{2}\text{tr}\left[  A(g_{1}%
)D^{2}W^{\ast}(g)\right]  ,
\end{align*}
where $A(g_{1})=C^{1}(g_{1})(C^{1})^{T}(g_{1})$. Also $\bar{V}^{\varepsilon
_{j}}(g;1/\eta_{j}\varepsilon_{j})\ $is equal to $U^{\varepsilon_{j},\eta_{j}%
}(g,t)$ at $t=0$, where $U^{\varepsilon_{j},\eta_{j}}$ satisfies
\[
\partial_{t}U^{\varepsilon_{j},\eta_{j}}(g,t)+\left\langle DU^{\varepsilon
_{j},\eta_{j}}(g,t),\bar{B}\right\rangle -\frac{1}{2}\left\Vert (C^{1})^T%
(g_{1})DU^{\varepsilon_{j},\eta_{j}}(g,t)\right\Vert ^{2}+\frac{1}{2}%
\text{tr}\left[  A(g_{1})D^{2}U^{\varepsilon_{j},\eta_{j}}(g,t)\right]  =0
\]
plus a zero terminal condition at $t=1/\varepsilon\eta$ for $g\in R_{M}(0)$
and $U^{\varepsilon_{j},\eta_{j}}(g,t)=\infty$ for $g\in\partial\left[
-1/\varepsilon,1/\varepsilon\right]  ^{2}$. (The existence and uniqueness of a
solution to this equation follows easily from the fact that $\exp
-U^{\varepsilon_{j},\eta_{j}}(g,t)$ satisfies a linear equation with zero
boundary condition.)

Now suppose that
\[
d\bar{G}=\bar{B}dt+C^{1}(\bar{G}_{1})udt+C^{1}(\bar{G}_{1})dW
\]
is an optimally controlled process for $\bar{V}^{\varepsilon_{j}}(g;1/\eta
_{j}\varepsilon_{j})$. Then
\[
dW^{\ast}(\bar{G})=\left\langle DW^{\ast}(\bar{G}),\bar{B}+C^{1}(\bar{G}%
_{1})u\right\rangle dt+\frac{1}{2}\text{tr}\left[  A(\bar{G}_{1})D^{2}W^{\ast
}(\bar{G})\right]  dt+\text{a martingale}.
\]
If the lower bound does not hold, then by (\ref{eqn:abound}) there is a
sequence of starting points $g_{j}$ such that
\[
\bar{V}^{\varepsilon_{j}}(g_{j}/\varepsilon_{j};1/\eta_{j}\varepsilon_{j}%
)\leq\lambda^{\ast}-a.
\]
That means that since $u$ is the corresponding optimal control
\[
E_{y_{j}/\varepsilon_{j},0}\left(  \varepsilon_{j}\eta_{j}\int_{0}%
^{1/\varepsilon_{j}\eta_{j}}\frac{1}{2}\left\Vert u(t)\right\Vert
^{2}dt\right)  \leq\lambda^{\ast}-a.
\]
With this control and starting point, by It\^{o}'s formula
\begin{align*}
&  E_{y_{j}/\varepsilon_{j},0}W^{\ast}(\bar{G}(1/\varepsilon_{j}\eta
_{j}))-W^{\ast}(g_{j}/\varepsilon_{j})\\
&  =E_{y_{j}/\varepsilon_{j},0}\int_{0}^{1/\varepsilon_{j}\eta_{j}%
}\left\langle DW^{\ast}(\bar{G}(t)),\bar{B}+C^{1}(\bar{G}_{1}%
(t))u(t)\right\rangle dt+\frac{1}{2}\text{tr}\left[  A(\bar{G}_{1}%
(t))D^{2}W^{\ast}(\bar{G}(t))\right]  dt\\
&  \geq \frac{1}%
{\varepsilon_{j}\eta_{j}}\ \lambda^{\ast}-E_{y_{j}/\varepsilon_{j},0}\left(  \int_{0}^{1/\varepsilon_{j}\eta_{j}%
}\frac{1}{2}\left\Vert u(t)\right\Vert ^{2}dt\right)  \geq\frac{1}%
{\varepsilon_{j}\eta_{j}}\ \lambda^{\ast}-\frac{1}{\varepsilon_{j}\eta_{j}%
}(\lambda^{\ast}-a)\geq\frac{1}{\varepsilon_{j}\eta_{j}}a.
\end{align*}
Since we have normalized so that $W^{\ast}\geq0$, $W^{\ast}(g_{j}%
/\varepsilon_{j})\geq0$. Using the upper bound $W^{\ast}(g)\leq\kappa
\left\Vert g\right\Vert +K$ and that $\bar{G}(1/\varepsilon_{j}\eta_{j})\in
R_{1/\varepsilon_{j}}(0)~$gives
\[
E_{y_{j}/\varepsilon_{j},0}W^{\ast}(\bar{G}(1/\varepsilon_{j}\eta_{j}%
))\leq\frac{\kappa}{\varepsilon_{j}}+K.
\]
Since $a>0$, since $\varepsilon_{j}\rightarrow0$ and $\eta_{j}\rightarrow0$ as
$j\rightarrow\infty$ we get a contradiction to
\[
\frac{1}{\varepsilon_{j}\eta_{j}}a\leq\frac{\kappa}{\varepsilon_{j}}+K.
\]

\subsubsection{$D$ is the intersection of two smooth $1$-dimensional
manifolds, i.e., a point}

The argument in this case is essentially the same as in the last case, except
that the only velocity we need consider is $\beta=0$, and so the centering
around this velocity is no longer needed, and the linearization is done so as to
make the $\rho^{\varepsilon,\alpha}(x_{1},x_{2})$ be of the form $e^{\frac
{1}{\varepsilon}(g_{1}B_{1}+g_{2}B_{2})}/(  e^{\frac{1}{\varepsilon
}(g_{1}B_{1}+g_{2}B_{2})}+e^{-\frac{1}{\varepsilon}(g_{1}B_{1}+g_{2}B_{2}%
)})  $.

\bibliographystyle{plain}
\bibliography{main}

\end{document}